\newtheorem{thm}{Theorem}[section]
\newtheorem{lemma}[thm]{Lemma}
\newtheorem{prop}[thm]{Proposition}
\newtheorem{conj}[thm]{Conjecture}
\newtheorem{cor}[thm]{Corollary}
\theoremstyle{definition}
\newtheorem{defn}[thm]{Definition}
\newtheorem{nota}[thm]{Notation}
\newtheorem{remark}[thm]{Remark}
\newtheorem{example}[thm]{Example}
\def\sign{\mathop{\rm sign}}
\def\Star{\mathop{\rm star}}
\def\supp{\mathop{\rm supp}}
\def\Bary{\mathop{\rm Bary}}
\def\Cx{\mathop{\rm Cx}}
\def\Hom{\mathop{\rm Hom}}
\def\rank{\mathop{\rm rank}}
\def\row{\mathop{\rm row}}
\def\Gr{\mathop{\rm Gr}}
\def\EGr{\mathrm{EGr}}
\def\MacP{\mathop{\rm MacP}}
\def\Flag{\mathop{\rm Flag}}
\def\OMFlag{\mathop{\rm OMFlag}}
\def\ll{\mathrm{lin}}
\def\aa{\mathrm{aff}}
\def\A{{\mathcal A}}
\def\DD{{\mathcal D}}
\def\H{{\mathcal H}}
\def\M{{\mathcal M}}
\def\N{{\mathcal N}}
\def\OO{{\mathcal O}}
\def\PPP{{\mathcal P}}
\def\R{{\mathbb R}}
\def\U{{\mathcal U}}
\def\V{{\mathcal V}}
\def\Y{{\mathcal Y}}
\def\Z{{\mathcal Z}}
\def\a{ a} 
\def\2{{2}}
\def\e{{\mathbf e}}
\def\s{{\mathbf s}}
\def\vv{{\mathbf v}}
\def\w{{\mathbf w}}
\def\x{{\mathbf x}}
\def\y{{\mathbf y}}
\def\z{{\mathbf z}}
\begin{document}

\title[Combinatorial formula for Pontrjagin classes]{On Gelfand and MacPherson's combinatorial formula for Pontrjagin classes}

\author{Olakunle Abawonse}
\address{Department of mathematics and statistics, Binghamton University, Binghamton, NY 13902-6000, USA.}
\curraddr{Department of mathematics, Northeastern University}
\email{o.abawonse@northeastern.edu} 
\author{Laura Anderson}
\address{Department of mathematics and statistics, Binghamton University, Binghamton, NY 13902-6000, USA.}
\email{laura@math.binghamton.edu}
\keywords{Pontrjagin class, oriented matroid, combinatorial differential manifold, Chern-Weil}
\begin{abstract} In~\cite{GM} Gelfand and MacPherson gave a local and combinatorial formula for the Pontrjagin classes of a differential manifold. We give an expanded version of their discussion and highlight the origins of combinatorial differential manifolds in their work.
\end{abstract}
\maketitle

\section{Introduction}

In 1992 Gelfand and MacPherson (\cite{GM}) found a local and combinatorial formula for the Pontrjagin classes of a compact smooth manifold by introducing a new combinatorial approach to smooth structures. Their approach, in 5 pages, encompassed a formidable array of techniques,  and proofs were omitted or sketched. Since then some elements of their approach have gotten further exposition and development (\cite{MacP}, \cite{bundles}, \cite{mythesis}),  but their full proof has never been presented in a broadly accessible form.

Our aim here is to provide a full and reasonably self-contained presentation of the results in~\cite{GM}. We expect few readers to come with knowledge of all of the background mathematics involved, so we have included appendices outlininging such knowledge.
In the course of preparing this we found some minor errors in the original paper, which we have listed in Appendix~\ref{app:oops}. 

 \section{Overview}

Throughout, $M$ denotes a compact smooth manifold. 

There are two main elements of Gelfand and MacPherson's approach. The first is an alternative formulation of Chern-Weil theory that gives a formula for the inverse rational Pontrjagin classes of a smooth bundle in terms of the Chern class of an associated circle bundle. The second is the development of a combinatorial model for differential manifolds, within which we can find a combinatorial formula for this Chern class, when the smooth bundle in question is the tangent bundle.

Chern-Weil theory gives formulas for characteristic classes of smooth bundles in terms of curvature forms. This may seem like an unlikely place to look for combinatorics, but read on. Given a smooth real vector bundle $\xi:E\to M$ of even rank $a$ with a Riemannian metric, we consider the associated {\em Grassmannian $2$-plane bundle} $\pi:\Y\to M$. Here
\[\Y=\{(m, V): m\in M, \mbox{ $V$ a $2$-dimensional subspace of $\xi^{-1}(m)$}\}\]
and $\pi$ is projection onto the first coordinate. This is a fiber bundle with fiber the Grassmannian $\Gr(2, \R^a)$. 
There is a canonical rank 2 real vector bundle $\xi_\2:E_\2\to\Y$, where $E_\2=\{(m,V,v):(m,V)\in\Y, v\in V\}$.
%
%
%
The alternative view of Chern-Weil theory in~\cite{GM} gives a formula for the inverse Pontrjagin classes of $\xi$ in terms of a curvature form on $\xi_\2$ (see Theorem~\ref{thm:chernweil}.)

Stepping back from the smooth world, we can interpret this formula (Corollary~\ref{cor:chernweil1} ) in terms of the Chern class of $\xi_\2$, or equivalently, the Chern class of the circle bundle associated to $\xi_\2$ (with coefficients twisted by the orientation presheaf). {\em Thus much of  the task of finding the rational Pontrjagin classes of a smooth bundle reduces to finding the Chern class of a circle bundle.} Finding the Chern class of a circle bundle is a more tempting combinatorial target.

The second major element of  Gelfand and MacPherson's approach is  {\em combinatorial differential manifolds}.
A combinatorial differential manifold is a simplicial complex $X$, which plays the role of the base space of a manifold, a cell decomposition $\hat X$ of $X$, and an association of an {\em oriented matroid} to each cell in $\hat X$, which plays the role of a coordinate chart at that cell. From a smooth triangulation $X\to M$ of a smooth manifold $M$ we obtain a combinatorial differential manifold. The combinatorial formula for Pontrjagin classes then comes from combinatorially mimicking the construction of $\xi_\2$ to get a simplicial $S^1$ qusifibration, and then showing that the Chern class of this quasifibration leads to a formula for the Pontrjagin classes of $M$.

We should note that  Gelfand and MacPherson's formula is not quite ``combinatorial" in the sense of clearly  depending only on the combinatorics of $X$. The smooth structure on $M$ is used to construct the combinatorial differential manifold over $X$. The final formula is in terms of a {\em fixing cycle} derived from the combinatorial differential structure, and it's not clear that the fixing cycle is independent of the combinatorial differential structure.

\section{History}
 
Thom~(\cite{Thom})  and Rohlin and \v{S}varc~(\cite{RS})  independently showed, by nonconstructive arguments, that the rational Pontrjagin classes of a smooth manifold are combinatorial invariants: that is, if $M$ is a smoothing of a simplicial complex $X$ then $X$ determines the rational Pontrjagin classes of $M$. Levitt and Rourke~\cite{LR} showed, by a nonconstructive argument, the existence of a  formula for the Pontrjagin classes that is local (i.e., given by a cocycle whose value on an open set in $X$ depends only on the combinatorics of $X$ in that set).

 Gelfand and MacPherson's approach grew out of work of Gabrielov, Gelfand, and Losik (\cite{GGL1}, \cite{GGL2}, \cite{MGGL}) that gave a formula for the first Pontrjagin class and suggested a route to all of the Pontrjagin classes. A key advance in Gelfand and MacPherson's approach was the use of oriented matroids as combinatorial analogs to tangent spaces.
 The resulting notion of {\em combinatorial differential manifolds}, and its generalization {\em matroid bundles}, was expanded on in~\cite{MacP}, \cite{bundles}, \cite{mythesis}. 
 
Questions on characteristic classes associated to combinatorial differential manifolds and matroid bundles are closely tied to questions on the topology of the {\em combinatorial Grassmannians} described in Sections~\ref{sec:MacP} and~\ref{sec:eric}, in particular the {\em MacPhersonian}. Since Gelfand and MacPherson's work, some progress has been made on elucidating the topology of the MacPhersonian, including its mod 2 cohomology (\cite{AD}), and its first few homotopy groups (\cite{macppi1}). The conjecture that $\Gr(r,\R^S)\simeq\|\MacP(r,S)\|$ remains open. (An attempt at a proof in 2009 (\cite{Biss}) was withdrawn.)

\section{An alternate form of Chern-Weil theory\label{sec:chernweil}}

Chern-Weil theory is the study of characteristic classes within the context of de Rham cohomology. It computes rational characteristic classes of smooth manifolds, and more generally principal bundles, in terms of connections and curvature.

Let $\xi:E\to B$ be a smooth bundle with a connection, and let $\Omega$ be the associated curvature form. Classical Chern-Weil theory (cf.\  \S 5.4 in~\cite{MoritaDF})) says that the total Pontrjagin class of $E$ with coefficients in $\R$ is
\[P(E)=\sum_i p_i(E)=\left[\det\left( I +\frac{1}{2\pi}\Omega\right)\right]\]

Since $p_0(E)=1$,  $P(E)$ has an inverse $\tilde{P}(E)=\sum_i\tilde{p}_i(E)$ in $H^*(B)$. Gelfand and MacPherson's alternate form of Chern-Weil theory gives a formula for $\tilde{P}$ rather than $P$, and it is in terms of the curvature form of a rank 2 bundle associated to $E$ rather than the original bundle.

Throughout this section  $\xi:E\to M$ denotes a real vector bundle  over a smooth compact manifold $M$, and $\pi:\Y\to M$ denotes the associated Grassmannian $2$-plane bundle. For most of Section~\ref{sec:chernweil} we assume that $a$ is even: this saves us many issues around orientability. The modifications needed when $a$ is odd are outlined in Section~\ref{sec:gettingbeyondsimplifications}.

\subsection{Orientability of $\pi$}\label{sec:Yorient}

\begin{prop} \label{prop:yorientable}  Let $a$ be even, and let $\xi:E\to M$ be a rank $a$ real vector bundle. Then the associated Grassmannian 2-plane bundle $\pi:\Y\to M$ is orientable.
\end{prop}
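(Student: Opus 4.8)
The assertion is that the fiber bundle $\pi$ is orientable, i.e.\ that its fibers $\Gr(2,\R^a)$ can be coherently oriented; equivalently, that the vertical tangent bundle $T^v\Y:=\ker(d\pi)$ is an orientable vector bundle over $\Y$, i.e.\ that $w_1(T^v\Y)=0$. So the plan is simply to compute this Stiefel--Whitney class. (Note that the hypothesis that $a$ is even is genuinely needed: for odd $a$ even a single fiber $\Gr(2,\R^a)$ is non-orientable, so no coherent fiberwise orientation can exist.)

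First I would fix a Riemannian metric on $\xi$ (harmless, and assumed throughout this section anyway) and use the standard identification of the vertical tangent bundle of a Grassmannian bundle, $T^v\Y\cong\Hom(\xi_2,\pi^*\xi/\xi_2)$: at a point $(m,V)\in\Y$ the fiber is $\Hom(V,\xi^{-1}(m)/V)$, the tangent space to $\Gr(2,\xi^{-1}(m))$ at $V$. From $\Hom(A,B)\cong A^\vee\otimes B$, the identity $w_1(A^\vee)=w_1(A)$, and the Whitney sum formula one gets, with $\mathbb{Z}/2$ coefficients, $w_1(T^v\Y)=(a-2)\,w_1(\xi_2)+2\,w_1(\pi^*\xi/\xi_2)$. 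The short exact sequence $0\to\xi_2\to\pi^*\xi\to\pi^*\xi/\xi_2\to 0$ gives $w_1(\pi^*\xi/\xi_2)=w_1(\pi^*\xi)+w_1(\xi_2)$, and substituting this the whole expression collapses mod $2$ to $(a-2)\,w_1(\xi_2)$. Since $a$ is even, $a-2$ is even, so $w_1(T^v\Y)=0$, which is exactly what we need.

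There is no real obstacle here; the only points requiring care are recalling the standard form of the vertical tangent bundle of a Grassmannian bundle and getting the mod $2$ coefficients right (in particular that the cross-term $2\,w_1(\pi^*\xi/\xi_2)$ dies and that the parity of $a$ enters precisely through $(a-2)\equiv 0$). If a more hands-on argument is preferred, one can instead reduce the structure group to $O(a)$, note that $\Gr(2,\R^a)$ is orientable since $a$ is even so that the $O(a)$-action defines a homomorphism $O(a)\to\{\pm1\}$ recording orientation-reversal, observe that this homomorphism is trivial on the connected group $SO(a)$, and evaluate it on the reflection $R=\mathrm{diag}(-1,1,\dots,1)$: $R$ fixes $V_0=\mathrm{span}(e_1,e_2)$, and its differential there acts on $T_{V_0}\Gr(2,\R^a)\cong V_0^*\otimes V_0^\perp$ as $\mathrm{diag}(-1,1)\otimes\mathrm{id}_{V_0^\perp}$, with determinant $(-1)^{a-2}=1$, so the homomorphism is trivial and every transition function of $\pi$ preserves a chosen fiber orientation.
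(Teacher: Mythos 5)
Your proof is correct, but it takes a genuinely different route from the paper. The paper argues entirely in coordinates: it covers $\Gr(2,\R^{a})$ by the charts $W_{x,y}$ of $\{x,y\}$-reduced matrices, shows by a Jacobian computation (Lemma~\ref{lem:chartsonG}) that the chart transitions preserve orientation when $a$ is even, and then checks case by case over elementary matrices that the transition maps of $\pi$ act on fibers orientation-preservingly. You instead identify the vertical tangent bundle $T^v\Y\cong\Hom(\xi_2,\pi^*\xi/\xi_2)$ and compute $w_1(T^v\Y)=(a-2)\,w_1(\xi_2)=0$ in $H^1(\Y;\mathbb{Z}/2)$ since $a$ is even (your second variant, evaluating the orientation homomorphism $O(a)\to\{\pm1\}$ on a reflection fixing a $2$-plane, is equally valid). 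The facts you invoke without proof are standard: the identification of $T^v\Y$, the rank formula for $w_1$ of a tensor product, and the equivalence between orientability of a bundle with connected orientable fibers and orientability of its vertical tangent bundle (the connectedness of $\Gr(2,\R^a)$ is what makes the monodromy argument work); note also that the fiber orientability your second variant quotes as known is already delivered by your $w_1$ computation restricted to a single fiber. As for what each approach buys: yours is shorter, avoids the case analysis, and gives more, since $\det T^v\Y\cong(\det\xi_2)^{\otimes(a-2)}$ (up to a square, hence trivial, factor) immediately yields the odd-dimensional statement of Lemma~\ref{lem:chartsonGodda} that the orientation presheaf is $\OO^{\otimes(a-2)}$; the paper's hands-on computation, on the other hand, produces an explicit orientation of $\Gr(2,\R^a)$ in the charts $W_{x,y}$, which is then used concretely in the sign computation of Lemma~\ref{lem:p1}, whereas your argument establishes orientability abstractly and a specific orientation would still have to be fixed for that later step.
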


\begin{nota} For a matrix $N$ with $a$ columns and and $1\leq x<y\leq a$, let $N_{x,y}$ be the submatrix consisting of the columns of $N$ indexed by $x$ and $y$. Let $N\backslash\{x,y\}$ be the matrix obtained by deleting the columns indexed by $x$ and $y$.
\end{nota}

\begin{defn} A matrix $N$ is {\em $\{x,y\}$-reduced} if $N_{x,y}=I$.
\end{defn}

We begin by describing coordinate charts on $\Gr(2,\R^{a})$. For each $x<y$ let $W_{x,y}$ be the subset of $\Gr(2,\R^{a})$ consisting of spaces of the form $\row(N)$, where $N$ is $\{x,y\}$-reduced. A coordinate chart on $W_{x,y}$ is given by the map  $\Phi_{x,y}:\R^{2(a-2)}\to W_{x,y}$  sending $(n_1, \ldots, n_{a(a-2)})$ to the $\{x,y\}$-reduced matrix $N$ with
\[N\backslash\{x,y\}=\begin{pmatrix}
n_1&n_3&\cdots& n_{2(a-2)-1}\\
n_2&n_4&\cdots&n_{2(a-2)}
\end{pmatrix}\]
An element of  $W_{w,x}\cap W_{y,z}$ can be written as the row space of a $\{w,x\}$-reduced matrix $N$ and as the row space of a $\{y,z\}$-reduced matrix $N_{y,z}^{-1}N$. (The intersection $W_{w,x}\cap W_{y,z}$ is exactly the set of all $\row(N)$ such that $N$ is $ \{w,x\}$-reduced and $N_{y,z}$ is invertible.) This gives us the transition function $g_{(w,x),(y,z)}$ between the two coordinate systems on $W_{w,x}\cap W_{y,z}$.

\begin{lemma}\label{lem:chartsonG} If $a$ is even then $g_{(w,x),(y,z)}$ is orientation preserving.
	
	In particular, $\Gr(2,\R^{a})$ is orientable.
\end{lemma}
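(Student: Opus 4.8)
The plan is to compute the Jacobian determinant of $g_{(w,x),(y,z)}$ at an arbitrary point of $W_{w,x}\cap W_{y,z}$ and show that it equals $(\det N_{y,z})^{-a}$, where $N$ is the $\{w,x\}$-reduced matrix representing that point; since $a$ is even this number is positive, so $g_{(w,x),(y,z)}$ is orientation preserving, and as the charts $\Phi_{x,y}$ (over all $x<y$) cover $\Gr(2,\R^{a})$, orientability follows at once.

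To set this up, fix $\row(N)\in W_{w,x}\cap W_{y,z}$ with $N$ being $\{w,x\}$-reduced, put $A=N_{y,z}$ (invertible on this overlap), and let $N'=A^{-1}N$ be the $\{y,z\}$-reduced representative, so $g_{(w,x),(y,z)}$ carries the entries of $N\backslash\{w,x\}$ to those of $N'\backslash\{y,z\}$. I would group the $2(a-2)$ coordinates of each chart into $a-2$ blocks of size $2$, one per column, and view the Jacobian as an $(a-2)\times(a-2)$ array of $2\times2$ blocks; writing $N^{(c)}$ for the column of $N$ indexed by $c$, one has $(N')^{(c)}=A^{-1}N^{(c)}$. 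Two observations organize the array: (i) if $c\notin\{w,x\}\cup\{y,z\}$ then $(N')^{(c)}=A^{-1}N^{(c)}$, so $\partial(N')^{(c)}/\partial N^{(c')}=\delta_{cc'}A^{-1}$ for every other such $c'$; and (ii) if $c\in\{w,x\}$ then $N^{(c)}$ is a constant standard basis column, so $(N')^{(c)}$ is a column of $A^{-1}$, which depends only on the columns of $N$ in positions $y$ and $z$, hence only on the source blocks indexed by $\{y,z\}\setminus\{w,x\}$. Reordering the source blocks so that those indexed by $\{y,z\}\setminus\{w,x\}$ come last, and the target blocks so that those indexed by $\{w,x\}\setminus\{y,z\}$ come last, (i) and (ii) put the block Jacobian into block upper-triangular form, with diagonal consisting of one copy of $A^{-1}$ for each of the $a-4+k$ positions common to the two charts (where $k=|\{w,x\}\cap\{y,z\}|$), followed by a single corner block recording how the columns of $A^{-1}$ depend on $A$.

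The computation then reduces to two determinants. The diagonal part contributes $(\det A)^{-(a-4+k)}$. For the corner block: when $k=0$ it is the derivative of the inversion map $A\mapsto A^{-1}$, and since $D(A\mapsto A^{-1})(H)=-A^{-1}HA^{-1}$ while left and right multiplication by a fixed $2\times2$ matrix $B$ each act on the space of $2\times2$ matrices with determinant $(\det B)^{2}$, this derivative has determinant $(\det A)^{-4}$; when $k=1$ the corner is a single $2\times2$ block whose determinant a short direct computation gives as $(\det A)^{-3}$; when $k=2$ the pairs coincide, $g_{(w,x),(y,z)}$ is the identity, and $A=I$. In every case the corner contributes $(\det A)^{k-4}$, so $\det Dg_{(w,x),(y,z)}=(\det A)^{-(a-4+k)}(\det A)^{k-4}=(\det A)^{-a}$, which is positive since $a$ is even.

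The one genuinely delicate point — and the main obstacle to a clean write-up — is the sign bookkeeping. The block reorderings above are permutations of size-$2$ blocks, and a permutation $\tau$ of such blocks lifts to a permutation of coordinates of sign $\mathrm{sgn}(\tau)^{2}=+1$; so the reindexing never affects the sign of the Jacobian, and evenness enters twice: in this block size and in the exponent $-a$. What remains is routine — checking (i) and (ii) against the explicit form of $\Phi_{x,y}$, and the $2\times2$ corner computation in the case $k=1$. (If one only wanted orientability one could instead note that $w_1$ of the tangent bundle of $\Gr(2,\R^{a})$ equals $a$ times $w_1$ of the tautological plane bundle mod $2$, but that does not establish the stated statement about the transition map.)
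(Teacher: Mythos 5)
Your argument is correct, and it takes a genuinely different route from the paper. The paper first reduces (``for simplicity'') to $(w,x)=(1,2)$ and $\{y,z\}\subseteq\{1,2,3,4\}$, observes that the Jacobian is then block lower-triangular with an exceptional $4\times4$ block $J_1$ and $a-4$ diagonal copies of $N_{y,z}^{-1}$, kills the sign of the diagonal part using that $a-4$ is even, and disposes of $J_1$ by a case-by-case ``tedious but easy'' positivity check. You instead work with arbitrary $(w,x),(y,z)$, organize the Jacobian into $2\times2$ column blocks, and compute the determinant exactly: block upper-triangularity with $a-4+k$ copies of $A^{-1}$ on the diagonal plus a corner block of size $2(2-k)$, whose determinant you identify as $(\det A)^{k-4}$ (the $k=0$ case being the derivative of matrix inversion, with determinant $(\det A)^{-4}$ via the $(\det B)^2$ determinants of left and right multiplication; the $k=1$ case a direct $2\times2$ computation, which indeed gives $(\det A)^{-3}$ in each configuration; $k=2$ trivial). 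The payoff $(\det N_{y,z})^{-a}$ is an exact formula rather than just a sign, it makes transparent exactly where evenness of $a$ enters (the exponent $-a$ and the fact that permuting size-$2$ blocks lifts to an even permutation of coordinates), and it replaces the paper's unverified $4\times4$ case check and its WLOG relabeling by structural computations. The cost is somewhat heavier bookkeeping (the common/noncommon block reordering and the three values of $k$), and your deferred items --- verifying observations (i) and (ii) against the explicit charts, the $k=1$ corner determinant, and the block-permutation sign --- are genuinely routine and of about the same weight as the check the paper itself defers. Both proofs conclude orientability the same way, since the charts $W_{x,y}$ cover $\Gr(2,\R^{a})$.
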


\begin{proof} We abbreviate $g_{(w,x),(y,z)}$ by $g$. It suffices to show that the Jacobian matrix $(\frac{\partial g_i}{\partial m_j})_{i,j}$ has positive determinant.
	For simplicity we assume $(w,x)=(1,2)$ and $\{y,z\}\subseteq\{1,2,3,4\}$.

	For a $\{1,2\}$ reduced matrix $N$ we have
	\[
	N=\begin{pmatrix}
	1&0&n_1&n_3&\cdots&n_{2(a-2)-1}\\
	0&1&n_2&n_4&\cdots&n_{2(a-2)}
	\end{pmatrix}\]
	and so
	$N_{y,x}^{-1}N$ has columns
	\[
	N_{y,x}^{-1}\begin{pmatrix}1\\0\end{pmatrix}, N_{y,x}^{-1}\begin{pmatrix}0\\1\end{pmatrix},  N_{y,x}^{-1}\begin{pmatrix}n_1\\n_2\end{pmatrix},\ldots,N_{y,x}^{-1}\begin{pmatrix}n_{2(a-2)-1}\\n_{2(a-2)}\end{pmatrix}
	\]
		We abuse notation by writing a vector of length $2(a-2)$ as a vector of $a-2$ vectors of length 2.  Thus
	\[g\left(\begin{pmatrix} n_1\\n_2\end{pmatrix},\begin{pmatrix} n_3\\n_4\end{pmatrix},\cdots, \begin{pmatrix} n_{2(a-2)-1}\\n_{2(a-2)}\end{pmatrix}\right)=
	\left(\begin{pmatrix} b_1\\b_2\end{pmatrix},\begin{pmatrix} b_3\\b_4\end{pmatrix},N_{y,z}^{-1}\begin{pmatrix} n_5\\n_6\end{pmatrix}
	\cdots, N_{y,z}^{-1}\begin{pmatrix} n_{2(a-2)-1}\\n_{2(a-2)}\end{pmatrix}\right)\]
	where the values $b_1, b_2, b_3, b_4$ depend on our values of $y$ and $z$. Thus the Jacobian of $g$ has the form
	\[\begin{pmatrix}
	J_1&0\\J_2&J_3
	\end{pmatrix}\]
	where $J_1$ is $4\times4$ and $J_3$ is a block-diagonal matrix whose $a-4$ blocks are all $N_{y,z}^{-1}$. Thus the Jacobian has determinant $|J_1||N_{y,z}|^{a-4}$. Since $a$ is even, we need only check that $|J_1|$ is positive. This is a tedious but easy check for each of the possible values of $\{y,z\}$.
\end{proof}

\begin{proof}[Proof of Proposition~\ref{prop:yorientable}] Of course, we may assume $a>2$. Consider a cover $\{U_i\}$ of $M$ and local coordinates $\phi_i:U_i\times\R^{a}\to \xi^{-1}(U_i)$. The identification $\phi_i$  gives us an identification of $\pi^{-1}(U_i)$ with $U_i\times \Gr(2, \R^{a})$. Let $p\in U_i\cap U_j$. The transition map from $U_i$ coordinates to $U_j$ coordinates on $\xi^{-1}(p)$   is given by right multiplication by a matrix $A$. Thus the transition map on $\pi^{-1}(p)$ takes each $\row(N)$ to $\row(NA)$.
	
	We wish to show that this map is orientation preserving. The matrix $A$ is a product of elementary matrices, so it's enough to show that when $A$ is an elementary matrix then this map is orientation preserving. For each $x<y$ let $W_{x,y}$ be the subset of $\pi^{-1}(p)$ consisting of elements that, in the $U_i$ trivialization, have the form $\row(N)$ with $N$ $\{x,y\}$-reduced, and let $W'_{x,y}$ be the similar set with respect to the $U_j$-trivialization. Thanks to Lemma~\ref{lem:chartsonG}, it is enough to find some nonempty $W_{w,x}\cap W'_{y,z}$ such that the transition map between these charts is orientation-preserving.
	
	Case 1: Let  $A$ be a permutation matrix, and let $\sigma$ be the corresponding permutation. Then  $NA$ is
	$\{\sigma(1), \sigma(2)\}$-reduced. and  $W_{1,2}=W'_{\sigma(1), \sigma(2)}$. Also, $\sigma$ induces a permutation $\sigma'$ taking the columns of $N\backslash\{1,2\}$ to the columns of $(NA)\backslash \{\sigma(1), \sigma(2)\}$ for each $N\in W_{1,2}$. Let $\tilde A$ be the matrix of $\sigma'$. The transition map from $W_{1,2}$ to $W'_{\sigma(1),\sigma(2)}$ is given by the matrix
	\[Q^{-1}\begin{pmatrix} \tilde A&0\\0&\tilde A\end{pmatrix}Q\]
	where $Q$ is the permutation matrix for the permutation $1,3,5,\ldots,2,4,6,\ldots$. This matrix has positive determinant $|\tilde A|^2$, and so the transition map is orientation preserving.
	
	Case 2: Let $A$ be an invertible diagonal matrix, with diagonal entries $d_1,\ldots,d_{a}$. Then $W_{1,2}=W'_{1,2}$, and the transition matrix between these charts is a diagonal matrix with diagonal entries $\frac{1}{d_1}, \frac{1}{d_2}, \frac{d_3}{d_1}, \frac{d_3}{d_2}, \ldots,\frac{d_{a}}{d_1}, \frac{d_{a}}{d_2}$. The product of these entries is a square, so is positive.
	
	Case 3: Let $A$ be a matrix with 1's on the diagonal and a single nonzero entry off the diagonal. Then there is an $x<y$ such that $W_{x.y}=W'_{x,y}$ and the transition matrix between them is a triangular matrix with 1's on the diagonal, and so this transition map is orientation-preserving.

\end{proof}

\subsection{Rank 2 Grassmannians}

There are two canonical bundles over $\Gr(2, \R^\a)$. 
\begin{itemize}
\item Let $\EGr_{2}=\{(V,x): x\in V\in\Gr(2,\R^\a)\}$ and $\tau_{2}:\EGr_{2}\to\Gr(2,\R^\a)$ be the projection map.  
\item Let $\EGr_{(a-2)}=\{(V,x):  V\in\Gr(2,\R^\a), x\in  V^\perp\}$ and $\tau_{(a-2)}:\EGr_{(a-2)}\to\Gr(\2,\R^\a)$ be the projection map.
\end{itemize}
 The product $\EGr_{(a-2)}\oplus \EGr_\2$ is a trivial bundle and the total Pontrjagin class is multiplicative, and so $P(\EGr_{(a-2)})=\tilde{P}(\EGr_\2)$. 
%
%
%
%
With respect to the orientation on the Grassmannian given in Section~\ref{sec:Yorient} we have a formula for the Poincare dual of the top Pontrjagin class of $\EGr_{(a-2)}$.

\begin{lemma}\label{lem:p1}	If $a$ is even then
	$p_{\frac{a-2}{2}}(\EGr_{(a-2)})\frown[\Gr(2,\R^\a)]=(-1)^{\frac{\a-2}{2}}.$
\end{lemma}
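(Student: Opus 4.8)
The plan is to pass to complex coefficients, where the class in question becomes the \emph{top} Chern class of a bundle whose complex rank equals the complex dimension of the base, and is therefore an Euler number that can be computed by counting zeros of a section. We may assume $a\ge 4$, the case $a=2$ being trivial. Since $p_i(F)=(-1)^i c_{2i}(F\otimes\mathbb{C})$ for any real vector bundle $F$,
\[
p_{\frac{a-2}{2}}(\EGr_{(a-2)})=(-1)^{\frac{a-2}{2}}\,c_{a-2}\!\left(\EGr_{(a-2)}\otimes\mathbb{C}\right).
\]
The bundle $\EGr_{(a-2)}\otimes\mathbb{C}$ is complex of rank $a-2$ over $\Gr(2,\R^a)$, which by Lemma~\ref{lem:chartsonG} is a closed oriented manifold of real dimension $2(a-2)$. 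Hence $c_{a-2}(\EGr_{(a-2)}\otimes\mathbb{C})\frown[\Gr(2,\R^a)]$ is the Euler number of the underlying oriented real $2(a-2)$-plane bundle, and it suffices to show this number is $+1$ for the orientation of $\Gr(2,\R^a)$ fixed in Section~\ref{sec:Yorient}.

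To compute the Euler number I would use a section with a single zero. Writing $\mathrm{proj}_{V^\perp}$ for orthogonal projection onto $V^\perp$, put
\[
s(V)=\mathrm{proj}_{V^\perp}(\e_1)+\sqrt{-1}\,\mathrm{proj}_{V^\perp}(\e_2)\in V^\perp\otimes\mathbb{C};
\]
this is a smooth section of $\EGr_{(a-2)}\otimes\mathbb{C}$ that vanishes exactly when $\e_1,\e_2\in V$, i.e.\ only at $V_0:=\mathrm{span}(\e_1,\e_2)$, which is the center $\Phi_{1,2}(0)$ of the chart $\Phi_{1,2}$. So the Euler number is the local index of $s$ at this one zero, once that zero is seen to be nondegenerate.

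For the index, work in $\Phi_{1,2}$ with coordinates $n_1,\dots,n_{2(a-2)}$, so $V=\row(N)$ for the corresponding $\{1,2\}$-reduced matrix $N$. Then $f_j:=-n_{2j-1}\e_1-n_{2j}\e_2+\e_{j+2}$ $(1\le j\le a-2)$ is a smooth frame of $V^\perp$, equal to $\e_3,\dots,\e_a$ at $V_0$. A short computation (the Gram matrix of the rows of $N$ is $I+O(n^2)$) gives, to first order in the $n_i$,
\[
\mathrm{proj}_{V^\perp}(\e_1)\equiv-\sum_j n_{2j-1}f_j,\qquad \mathrm{proj}_{V^\perp}(\e_2)\equiv-\sum_j n_{2j}f_j .
\]
Reading off coordinates in the complex-oriented real basis $f_1,\sqrt{-1}f_1,f_2,\sqrt{-1}f_2,\dots$ of the fiber at $V_0$, the derivative of $s$ at $V_0$ is the linear map $\R^{2(a-2)}\to\R^{2(a-2)}$ sending $n_k\mapsto-n_k$, i.e.\ $-\mathrm{Id}$: an isomorphism (so the zero is nondegenerate) of determinant $(-1)^{2(a-2)}=1>0$. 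Hence the index is $+1$, the Euler number is $+1$, and $p_{\frac{a-2}{2}}(\EGr_{(a-2)})\frown[\Gr(2,\R^a)]=(-1)^{\frac{a-2}{2}}$.

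The reduction to an Euler number and the identification of the zero of $s$ are routine; the delicate part — and the only place a sign can slip — is the bookkeeping in the last step, where the ordering of the chart coordinates $n_i$, the ordering of the frame $\{f_j\}$, and the complex orientation of each summand $\mathbb{C}f_j$ of the fiber must be kept mutually consistent, a single transposition anywhere negating the answer. (One could instead identify the oriented double cover of $\Gr(2,\R^a)$ with a complex quadric and invoke its known cohomology, but that brings in more outside input and still requires the orientations to be matched.)
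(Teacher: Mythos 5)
Your proof is correct, and it reaches the paper's answer by a slightly different reduction than the one used here. The paper writes $p_{\frac{a-2}{2}}(\EGr_{(a-2)})$ as $e(\EGr_{(a-2)})^2=e(\EGr_{(a-2)}\oplus\EGr_{(a-2)})$ (citing Corollary 15.8 of \cite{MS}) and then counts the zeros of the section $V\mapsto(\pi_V(\e_{a-1}),\pi_V(\e_a))$ of the doubled bundle in the chart $W_{a-1,a}$; the sign $(-1)^{\frac{a-2}{2}}$ emerges at the very end as the sign of the permutation comparing the two orderings of the basis of $\R^{a-2}\times\R^{a-2}$. You instead invoke the definition $p_k(F)=(-1)^kc_{2k}(F\otimes\mathbb{C})$ and compute the Euler number of the complexification via the section $\mathrm{proj}_{V^\perp}(\e_1)+\sqrt{-1}\,\mathrm{proj}_{V^\perp}(\e_2)$ in the chart $W_{1,2}$; since $\EGr_{(a-2)}\otimes\mathbb{C}$ and $\EGr_{(a-2)}\oplus\EGr_{(a-2)}$ are the same real bundle up to a reordering of the frame, your zero-counting is essentially the same computation as the paper's, but the sign is absorbed into the $p$-versus-$c$ relation and the canonical complex orientation of the fiber, so the local index comes out a clean $+1$ (consistent with the paper's $(-1)^{\frac{(a-3)(a-2)}{2}}$, which is exactly the discrepancy between the complex and direct-sum orientations). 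Your first-order computation of the projections in the frame $f_j=-n_{2j-1}\e_1-n_{2j}\e_2+\e_{j+2}$ checks out, the use of $W_{1,2}$ rather than $W_{a-1,a}$ is harmless because Lemma~\ref{lem:chartsonG} makes all these charts compatibly oriented, and the zero is visibly nondegenerate. One genuine advantage of your route: $\EGr_{(a-2)}$ is in fact non-orientable (its first Stiefel--Whitney class equals that of $\EGr_2$), so the paper's appeal to the statement for \emph{oriented} rank-$2k$ bundles needs a twisted-Euler-class gloss, whereas the complexification is canonically oriented and no such issue arises.
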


\begin{proof} 
	If $\xi:E\to B$ is  an oriented rank $2k$ vector bundle then 
	its $k$th Pontrjagin class is the square of its Euler class  
	(cf.\  Corollary 15.8 in~\cite{MS}). Also, Euler classes are multiplicative, so $e(E)^2=e(E\oplus E)$. Thus in our case we have $p_{\frac{a-2}{2}}(\EGr_{(a-2)})=e(\EGr_{(a-2)}\oplus \EGr_{(a-2)})$.
	
 The Poincare dual of $e(\EGr_{(a-2)}\oplus \EGr_{(a-2)})$ can be given as the sum of the number of zeroes of a section, counted with multiplicity (Appendix~\ref{app:euler}).
	Define a smooth section $s: \Gr(2, \mathbb{R}^a) \rightarrow \EGr_{(a-2)} \oplus \EGr_{(a-2)}$ by $s(V) = (V,\pi_V(\e_{a-1}), \pi_V(\e_a)))$, where $\pi_V$ denotes projection onto $V^\perp$.   The section $s$ coincides with the zero section only at $V_0 := \row(\mathbf{0}|I)\in W_{a-1,a}$, and so $e(\EGr_{(a-2)}\oplus \EGr_{(a-2)})$ is the multiplicity of $V_0$.
	
	 Let $\tilde{s}$ denote the composition, using the local coordinates on $W_{a-1,a}$ described in Section~\ref{sec:Yorient},
	\[\R^{2(a-2)}\cong W_{a-1,a} \xrightarrow{s} (\EGr_{(a-2)}\oplus \EGr_{(a-2)})|{W_{a-1,a}}\xrightarrow{\cong} W_{a-1,a}\times V_0^\perp\times V_0^\perp \rightarrow \R^{a-2} \times \R^{a-2}  \]
	The multiplicity of the intersection point $V_0$ is 1 if  $\tilde{s}_*:T_{0}\R^{2(a-2)}
		\to T_{(0,0)}(\R^{a-2}\times \R^{a-2})$ is orientation preserving and $-1$ if $\tilde{s}_*$ is orientation-reversing. The map $(\EGr_{(a-2)}\oplus \EGr_{(a-2)})|_{W_{a-1,a}}\xrightarrow{\cong} W_{a-1,a}\times \R^{a-2}\times \R^{a-2}$ takes each $(x, v_1, v_2)$ to $(x, v_1',v_2')$, where $v_i'$ is obtained from $v_i$ by deleting the last two components.
	
	The tangent space $T_{0}\R^{2(a-2)}$ has as standard basis the tangents  to the curves $t\e_i$ in $\R^{2(a-2)}$. Composing these curves with our map $\tilde{s}$, we see \[\tilde{s}(t\e_i)=\begin{cases}
	(\frac{-t}{1+t^2}\e_\frac{i+1}{2},0)&\mbox{if $i$ is odd}\\
	(0,	\frac{-t}{1+t^2}\e_\frac{i}{2})&\mbox{if $i$ is even}
	\end{cases}\]
	Thus 
	\[\tilde{s}_*(\e_i)=\begin{cases}
	(-\e_\frac{i+1}{2},0)&\mbox{if $i$ is odd}\\
	(0,-\e_\frac{i}{2})&\mbox{if $i$ is even}
	\end{cases}
	\]
	Thus the standard positively oriented ordered basis of $T_{(0,\ldots,0)}\R^{2(a-2)}$ is mapped by $\tilde{s}_*$ to the an ordered basis with the same sign as $((\e_1, 0), (0,\e_1), (\e_2,0), (0, \e_2),\ldots, (\e_{a-2},0),(0,\e_{a-2}))$. The sign of the permutation taking this to the standard basis 
$((\e_1, 0),
\ldots, (\e_{a-2},0),(0,\e_1),  
\ldots, (0,\e_{a-2}))$ of $T(\R^{a-2}\times\R^{a-2})$ is $(-1)^{\frac{(a-3)(a-2)}{2}}=(-1)^{\frac{(a-2)}{2}}$.
	
\end{proof}

A slightly different approach to this proof is taken in the proof of Lemma~\ref{lem:p1odd}.

\subsection{The alternate formula}
Recall from the Introduction the bundle $\xi_\2:E_\2\to\Y$. Let $E_{(a-2)}=\{(m,V,v):(m,V)\in\Y, v\in V^\perp\}$ and $\xi_{(a-2)}:E_{(a-2)}\to\Y$ be projection. The pullback $\pi^*(E)$ splits as $\xi_2\oplus\xi_{(a-2)}$.
A connection on $E$ pulls back to a connection on 
$\pi^* E\cong E_{\2}\oplus E_{(a-2)}$, which can be given  with respect to local coordinates by
\[\omega = \begin{bmatrix}
\omega_1  & 0\\
0 & \omega_2
\end{bmatrix}\]\\
where $\omega_1, \omega_2$ are connection matrices from  metric connections for $\xi_\2$  and $\xi_{(\a-2)}$ respectively. By choosing a trivialization and  an orthonormal frame, $\omega_1$ can be chosen to be a skew-symmetric matrix. So, \[\omega_1 = \begin{bmatrix} 0 & \omega_{12}\\
-\omega_{12} & 0
\end{bmatrix}.\]
The corresponding curvature matrix is
\[\Omega_1 = \begin{bmatrix}

0 & \Omega_{12}\\ 
-\Omega_{12} & 0

\end{bmatrix}\]
where $\Omega_{12} = d\omega_{12}$ .

\begin{lemma} \label{lem:rank2pi1}
	$p_1(E_{2})=[(\frac{1}{2\pi}\Omega_{12})^2]$.
\end{lemma}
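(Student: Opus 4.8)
The plan is to apply the Chern--Weil formula $P(E)=\bigl[\det\bigl(I+\tfrac{1}{2\pi}\Omega\bigr)\bigr]$ recalled at the start of this section directly to the rank $2$ bundle $\xi_2\colon E_2\to\Y$, using the metric connection $\omega_1$ that (after choosing a trivialization and an orthonormal frame) we have put in the skew-symmetric form displayed above. Since $\omega_1$ is the $E_2$-block of the pulled-back connection on $\pi^*E\cong E_2\oplus E_{(a-2)}$, it is a genuine connection on $E_2$, so the formula computes $P(E_2)\in H^*(\Y;\R)$ from its curvature.

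The computation itself is immediate. The curvature of $\omega_1$ is $\Omega_1=\begin{bmatrix}0&\Omega_{12}\\-\Omega_{12}&0\end{bmatrix}$ with $\Omega_{12}=d\omega_{12}$ (the $\omega_1\wedge\omega_1$ term drops out because $\omega_{12}$ is a $1$-form), and hence
\[\det\!\left(I+\tfrac{1}{2\pi}\Omega_1\right)=\det\begin{bmatrix}1&\tfrac{1}{2\pi}\Omega_{12}\\-\tfrac{1}{2\pi}\Omega_{12}&1\end{bmatrix}=1+\left(\tfrac{1}{2\pi}\Omega_{12}\right)^2,\]
where $\bigl(\tfrac{1}{2\pi}\Omega_{12}\bigr)^2=\tfrac{1}{4\pi^2}\,\Omega_{12}\wedge\Omega_{12}$ is a $4$-form (the square of a $2$-form need not vanish). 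Now read off homogeneous components: the right-hand side is supported in degrees $0$ and $4$, while $P(E_2)=p_0(E_2)+p_1(E_2)$ with $p_0(E_2)=1\in H^0$ and $p_1(E_2)\in H^4$, so comparing degree-$4$ parts gives $p_1(E_2)=[(\tfrac{1}{2\pi}\Omega_{12})^2]$. (Alternatively one could invoke $p_1(E_2)=e(E_2)^2$ for an oriented rank-$2$ bundle, as in the proof of Lemma~\ref{lem:p1}, together with the Chern--Weil expression for the Euler class as the Pfaffian of the skew curvature matrix, namely $e(E_2)=[\tfrac{1}{2\pi}\Omega_{12}]$; the direct determinant route above avoids any orientability bookkeeping.)

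There is essentially no hard step here. The two things to be careful about are that the displayed block form really is the curvature of $\omega_1$ --- this is precisely what makes the rank-$2$ case collapse, since a skew-symmetric $2\times2$ matrix of forms contributes no degree-$2$ term to $\det(I+\tfrac{1}{2\pi}\Omega_1)$ --- and that the $2\pi$ normalization matches the version of the Chern--Weil formula recalled above, so that no stray constant or sign appears. One may also remark that although $\Omega_{12}$ itself changes sign under a change of orthonormal frame by a reflection, the product $\Omega_{12}\wedge\Omega_{12}$ is globally well defined, so the right-hand side is a genuine class in $H^4(\Y;\R)$.
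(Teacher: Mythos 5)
Your proof is correct and is essentially the paper's own argument: the paper simply cites the definition $P(E_2)=[\det(I+\tfrac{1}{2\pi}\Omega_1)]$, and you spell out the $2\times 2$ determinant computation and the comparison of degree-$4$ components. The extra remarks (vanishing of the $\omega_1\wedge\omega_1$ term, well-definedness of $\Omega_{12}\wedge\Omega_{12}$) are accurate but not needed beyond what the paper assumes.
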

\begin{proof}

	This follows  from the definition of the total Pontrjagin class  as   $[\det ( I + \frac{1}{2\pi}\Omega_1)]$ .
	
\end{proof}

\begin{thm}\label{thm:chernweil} For every $i$ 
	\begin{align}\label{eqn:cw}
	\tilde{p_i}(E)=\pi^!\left( \left[(-1)^i(\frac{1}{2\pi}\Omega_{12})^{\a-2+2i})\right]\right).
	\end{align}
\end{thm}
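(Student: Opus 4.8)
The idea is to combine the Chern–Weil formula for the total Pontrjagin class of the pullback $\pi^*E$ with the fibre-integration map $\pi^!$, using the multiplicative splitting $\pi^*E \cong E_\2 \oplus E_{(a-2)}$ and the key Lemma~\ref{lem:p1} about the top Pontrjagin class of $\EGr_{(a-2)}$ restricted to a fibre. Since the total Pontrjagin class is multiplicative, $P(\pi^*E) = P(E_\2)\,P(E_{(a-2)})$, hence $P(E_{(a-2)}) = \tilde P(E_\2)\,P(\pi^*E)$ in $H^*(\Y)$. On the other hand, $\pi^*$ followed by $\pi^!$ is, up to the appropriate degree shift, ``integration over the fibre $\Gr(2,\R^a)$'', so $\pi^!\bigl(\alpha \cup \pi^*\beta\bigr) = \pi^!(\alpha)\cup\beta$ for any $\alpha\in H^*(\Y)$, $\beta\in H^*(M)$ (the projection formula). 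The plan is to apply $\pi^!$ to a carefully chosen component of $P(E_{(a-2)}) = \tilde P(E_\2)\cup \pi^* P(E)$ and read off $\tilde p_i(E)$.

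First I would spell out the Chern–Weil side. By Lemma~\ref{lem:rank2pi1}, $p_1(E_\2) = [(\tfrac{1}{2\pi}\Omega_{12})^2]$, and since $E_\2$ has rank $2$ all higher Pontrjagin classes vanish, so $P(E_\2) = 1 + [(\tfrac{1}{2\pi}\Omega_{12})^2]$. Consequently its inverse is the geometric series $\tilde P(E_\2) = \sum_{j\ge 0}(-1)^j [(\tfrac{1}{2\pi}\Omega_{12})^{2j}]$, i.e.\ $\tilde p_j(E_\2) = (-1)^j[(\tfrac{1}{2\pi}\Omega_{12})^{2j}]$. Next, using $P(E_{(a-2)}) = \tilde P(E_\2)\cup\pi^*P(E)$ and extracting the degree-$(a-2+2i)$ part, I get
\[
p_{\frac{a-2}{2}+i}(E_{(a-2)}) = \sum_{j+k = i}(-1)^j\Bigl[(\tfrac{1}{2\pi}\Omega_{12})^{2(\frac{a-2}{2}+j)}\Bigr]\cup \pi^* p_k(E).
\]
Now apply $\pi^!$, which drops cohomological degree by $\dim\Gr(2,\R^a) = 2(a-2)$. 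The projection formula turns the right side into $\sum_{j+k=i}(-1)^j\,\pi^!\bigl[(\tfrac{1}{2\pi}\Omega_{12})^{a-2+2j}\bigr]\cup p_k(E)$.

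The crux is to evaluate $\pi^!\bigl[(\tfrac{1}{2\pi}\Omega_{12})^{a-2+2j}\bigr]$. Here I would use Lemma~\ref{lem:p1}: fibrewise, $[(\tfrac{1}{2\pi}\Omega_{12})^{a-2}]$ restricts to $p_{\frac{a-2}{2}}(\EGr_{(a-2)})$ on $\Gr(2,\R^a)$, which pairs with the fundamental class to give $(-1)^{\frac{a-2}{2}}$ by that lemma. So $\pi^!$ of the ``minimal'' power $[(\tfrac{1}{2\pi}\Omega_{12})^{a-2}]$ is $(-1)^{\frac{a-2}{2}}\cdot 1 \in H^0(M)$, and $\pi^!$ of any strictly higher power picks up extra $\pi^*$-factors from $H^*(M)$ — concretely, $\pi^!\bigl[(\tfrac{1}{2\pi}\Omega_{12})^{a-2+2j}\bigr] = (-1)^{\frac{a-2}{2}}\,c_j$ for some universal $c_j\in H^{4j}(M)$ built from Pontrjagin classes of $E$, with $c_0 = 1$. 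Plugging this into the displayed sum and matching it against the identity $\sum_{k}\tilde p_{i-k}(E)\,p_k(E) = \delta_{i,0}$ that defines $\tilde P(E)$, an induction on $i$ will force $(-1)^{\frac{a-2}{2}}\pi^!\bigl[(-1)^i(\tfrac{1}{2\pi}\Omega_{12})^{a-2+2i}\bigr] = \tilde p_i(E)$, provided we also verify $\pi^!$ of the left side $p_{\frac{a-2}{2}+i}(E_{(a-2)})$ equals the corresponding piece — but $P(E_{(a-2)}) = \tilde P(E_\2)$ when pulled back from the trivial-bundle relation over $\Gr(2,\R^a)$, so the left side's $\pi^!$ vanishes for $i>0$ (its $\Gr$-fibre degree is wrong) and the bookkeeping closes.

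\textbf{Main obstacle.} The genuinely delicate point is the self-consistency of the degree/orientation bookkeeping in $\pi^!$: making precise that $\pi^!$ is integration over the oriented fibre $\Gr(2,\R^a)$ (orientability is exactly Proposition~\ref{prop:yorientable}), that the projection formula holds with the right signs, and — most importantly — that $\pi^!\bigl[(\tfrac{1}{2\pi}\Omega_{12})^{a-2+2j}\bigr]$ is determined fibrewise by Lemma~\ref{lem:p1} even when $j>0$, where one must argue that the ``extra'' curvature factors are exactly $\pi^*$-pullbacks of the Pontrjagin classes of $E$ up to lower-order corrections. I would handle this by working in a local trivialization $\pi^{-1}(U)\cong U\times\Gr(2,\R^a)$, writing $\Omega_{12}$ as a sum of a fibre curvature term and horizontal corrections coming from the connection on $E$, and then using that $\pi^!$ kills everything except the top fibre form, whose integral is the constant from Lemma~\ref{lem:p1}; the horizontal corrections reassemble into $P(E)$ by naturality of Chern–Weil. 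An alternative, possibly cleaner route is to invoke the splitting principle / Leray–Hirsch: $H^*(\Y)$ is a free $H^*(M)$-module on $1, u, \dots, u^{a-2}$ where $u = [(\tfrac{1}{2\pi}\Omega_{12})^2]$, so $\pi^!$ is completely determined by its values on these generators, which Lemma~\ref{lem:p1} pins down, and the relation $P(E_{(a-2)}) = \tilde P(E_\2)$ over the fibre gives the single polynomial relation $u$ satisfies — from which the formula for $\tilde p_i(E)$ is pure algebra.
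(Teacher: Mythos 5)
Your route is in substance the paper's own: split $\pi^*E\cong E_\2\oplus E_{(a-2)}$, use multiplicativity of $P$ and the projection formula, let $\pi^!$ kill everything of fiber degree below $2(a-2)$, and evaluate the one surviving fiber integral via Lemma~\ref{lem:p1}. However, as written your sign bookkeeping is off, and the identity you end with, $(-1)^{\frac{a-2}{2}}\pi^!\bigl[(-1)^i(\tfrac{1}{2\pi}\Omega_{12})^{a-2+2i}\bigr]=\tilde p_i(E)$, differs from the theorem by the factor $(-1)^{\frac{a-2}{2}}$, i.e.\ contradicts it whenever $a\equiv 0\pmod 4$. Two slips compound. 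First, since $\tilde p_m(E_\2)=(-1)^m[(\tfrac{1}{2\pi}\Omega_{12})^{2m}]$, the coefficient in your displayed expansion of $p_{\frac{a-2}{2}+i}(E_{(a-2)})$ must be $(-1)^{\frac{a-2}{2}+j}$, not $(-1)^j$; moreover, if the sum is over $j,k\geq 0$ it omits the terms $[(\tfrac{1}{2\pi}\Omega_{12})^{2m}]\smile\pi^*p_k(E)$ with $m<\frac{a-2}{2}$, which are present in $H^*(\Y)$ and disappear only after applying $\pi^!$, so the displayed equality is not valid in $H^*(\Y)$ as stated. Second, on a fiber the relation $P(\EGr_{(a-2)})=\tilde P(\EGr_\2)$ gives $p_{\frac{a-2}{2}}(\EGr_{(a-2)})=(-1)^{\frac{a-2}{2}}\,[(\tfrac{1}{2\pi}\Omega_{12})^{a-2}]\big|_{\mathrm{fiber}}$, so Lemma~\ref{lem:p1} yields $\pi^!\bigl[(\tfrac{1}{2\pi}\Omega_{12})^{a-2}\bigr]=(-1)^{\frac{a-2}{2}}\cdot(-1)^{\frac{a-2}{2}}=1$, not $(-1)^{\frac{a-2}{2}}$ (this value is also forced by the $i=0$ case of the theorem, since $\tilde p_0(E)=1$).

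Separately, the step you single out as the main obstacle, evaluating $\pi^!\bigl[(\tfrac{1}{2\pi}\Omega_{12})^{a-2+2j}\bigr]$ for $j>0$ by a vertical/horizontal decomposition of $\Omega_{12}$ or by Leray--Hirsch, is not needed at all. Set $q_j:=(-1)^j\pi^!\bigl[(\tfrac{1}{2\pi}\Omega_{12})^{a-2+2j}\bigr]$ and apply $\pi^!$ to the graded pieces of $P(E_{(a-2)})=\tilde P(E_\2)\smile\pi^*P(E)$. The left side is $(-1)^{\frac{a-2}{2}}$ for $i=0$ by Lemma~\ref{lem:pistar}, and $0$ for $i>0$, not for the reason you give (over $\Y$ the bundle $E_\2\oplus E_{(a-2)}\cong\pi^*E$ is not trivial, so $P(E_{(a-2)})\neq\tilde P(E_\2)$ there), but simply because $E_{(a-2)}$ has rank $a-2$, so $p_{\frac{a-2}{2}+i}(E_{(a-2)})=0$ for $i\geq 1$. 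With the corrected signs the right side becomes $(-1)^{\frac{a-2}{2}}\sum_{j+k=i}q_j\smile p_k(E)$, giving $\sum_{j+k=i}q_j\,p_k(E)=\delta_{i,0}$, and induction on $i$ forces $q_i=\tilde p_i(E)$, which is exactly the theorem; no fiberwise information beyond Lemma~\ref{lem:pistar} and the rank bound enters. Once repaired in this way your argument coincides with the paper's proof, which runs the identical computation with total classes: $P(E)\smile\pi^!\bigl(\sum_i(-1)^i[(\tfrac{1}{2\pi}\Omega_{12})^{2i}]\bigr)=\pi^!(P(E_{(a-2)}))=(-1)^{\frac{a-2}{2}}$, followed by comparing terms of degree $4i$.
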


Here $\pi^!$ denotes the transfer map (integration along the fiber; see Appendix~\ref{app:transfer} and Section~\ref{sec:gettingbeyondsimplifications}).

\begin{remark} Theorem~\ref{thm:chernweil} holds for bundles of both odd and even dimension. As previously noted, we are presenting a proof for bundles of even dimension and postponing the odd-dimensional case to Section~\ref{sec:gettingbeyondsimplifications}. When $a$ is odd then $[(\frac{1}{2\pi}\Omega_{12})]$ is interpreted in the cohomology of $\Y$ with coefficients twisted by the orientation of $\xi_2$.
\end{remark}

\begin{lemma} \label{lem:pistar} If $a$ is even then $\pi^!(p_\frac{a-2}{2}(E_{(a-2)}))=(-1)^\frac{a-2}{2}$.
\end{lemma}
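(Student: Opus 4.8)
The statement to prove is Lemma~\ref{lem:pistar}: if $a$ is even then $\pi^!(p_{\frac{a-2}{2}}(E_{(a-2)})) = (-1)^{\frac{a-2}{2}}$. The plan is to reduce this to the already-established fiberwise computation in Lemma~\ref{lem:p1}. The key observation is that $E_{(a-2)}$, the rank $(a-2)$ bundle over $\Y$ whose fiber over $(m,V)$ is $V^\perp$, restricts on each fiber $\pi^{-1}(m) \cong \Gr(2,\R^a)$ to the canonical bundle $\EGr_{(a-2)}$ on the Grassmannian. Since Pontrjagin classes are natural, the restriction of $p_{\frac{a-2}{2}}(E_{(a-2)})$ to a fiber is exactly $p_{\frac{a-2}{2}}(\EGr_{(a-2)})$.

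First I would recall the defining property of the transfer map $\pi^!$ (Appendix~\ref{app:transfer}): for a fiber bundle $\pi:\Y\to M$ with fiber $F$ a closed oriented manifold, $\pi^!$ lowers degree by $\dim F$, and it is computed fiberwise — concretely, for a class $\alpha \in H^{\dim F}(\Y)$, the value $\pi^!(\alpha) \in H^0(M) = \Q$ is obtained by restricting $\alpha$ to any fiber $F_m$ and evaluating against the fundamental class $[F_m]$, i.e.\ $\pi^!(\alpha)|_m = \langle \alpha|_{F_m}, [F_m]\rangle$. Here $F = \Gr(2,\R^a)$ has dimension $2(a-2)$, and $p_{\frac{a-2}{2}}(E_{(a-2)})$ has degree $4\cdot\frac{a-2}{2} = 2(a-2)$, so the degrees match and $\pi^!$ of it does land in $H^0(M)$. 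Second, I would invoke naturality of Pontrjagin classes under the inclusion $\iota_m: \pi^{-1}(m) \hookrightarrow \Y$: since $\iota_m^*(E_{(a-2)}) \cong \EGr_{(a-2)}$ (both fibers over $V$ being $V^\perp \subseteq \R^a$, the identification coming from the local trivialization of $\xi$), we get $\iota_m^*(p_{\frac{a-2}{2}}(E_{(a-2)})) = p_{\frac{a-2}{2}}(\EGr_{(a-2)})$. Third, combining these with Lemma~\ref{lem:p1}, which says $p_{\frac{a-2}{2}}(\EGr_{(a-2)}) \frown [\Gr(2,\R^a)] = (-1)^{\frac{a-2}{2}}$, gives $\pi^!(p_{\frac{a-2}{2}}(E_{(a-2)}))|_m = (-1)^{\frac{a-2}{2}}$ for every $m$, hence equals $(-1)^{\frac{a-2}{2}} \in H^0(M)$ as claimed. (We should note that $M$ is connected, or argue componentwise, so that $H^0(M) = \Q$.)

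The one point requiring a little care — and the main (mild) obstacle — is the orientation bookkeeping: $\pi^!$ is defined relative to a chosen orientation of the fiber bundle $\pi:\Y\to M$, which is exactly the orientation constructed in Section~\ref{sec:Yorient} (this is where Proposition~\ref{prop:yorientable} is used), and Lemma~\ref{lem:p1} computes the cap product with respect to that same orientation on $\Gr(2,\R^a)$. So I would make explicit that the fiber orientation used by $\pi^!$ agrees fiberwise with the orientation of $\Gr(2,\R^a)$ used in Lemma~\ref{lem:p1}, which is immediate from how the bundle orientation is built from the charts $W_{x,y}$ and Lemma~\ref{lem:chartsonG}. Once that compatibility is recorded, the proof is just the three-line chain: restrict to a fiber, identify the bundle restriction with $\EGr_{(a-2)}$ by naturality, apply Lemma~\ref{lem:p1}.
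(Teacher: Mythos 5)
Your proposal is correct and follows essentially the same route as the paper: the transfer is integration over the fiber, the restriction of $p_{\frac{a-2}{2}}(E_{(a-2)})$ to a fiber is $p_{\frac{a-2}{2}}(\EGr_{(a-2)})$ by naturality, and Lemma~\ref{lem:p1} gives the value $(-1)^{\frac{a-2}{2}}$. Your version simply makes explicit the naturality and orientation-compatibility points that the paper's one-line proof leaves implicit.
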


\begin{proof} $\pi^!(p_\frac{a-2}{2}(E_{(a-2)}))$ is the integral over the fiber of $p_\frac{a-2}{2}(E_{(a-2)})$, and by Lemma~\ref{lem:p1} this is $(-1)^\frac{a-2}{2}$.
\end{proof}

\begin{proof}[Proof of Theorem~\ref{thm:chernweil}]	\begin{align*}
	\sum_{i } \left[(-1)^i(\frac{1}{2\pi}\Omega_{12})^{ 2i}\right]&=\sum_{i} (-1)^i(p_1(E_\2))^i&\mbox{by Lemma~\ref{lem:rank2pi1}}\\
	&=(1+p_1(E_\2))^{-1}\\
	&=\tilde{P}(E_\2)
	\end{align*}
	
	Thus 	
	\begin{align*}
	P(E)\smile \pi^!\left(\sum_{i } \left[(-1)^i(\frac{1}{2\pi}\Omega_{12})^{ 2i}\right]\right) & =  \pi^!\left((\pi^*P(E))\smile \tilde{P}(E_\2) \right)&\mbox{by Lemma~\ref{lem:projection}}\\
	& =   \pi^!\left(P(\pi^*
	E)\smile \tilde{P}(E_\2)\right)&\mbox{by naturality of $P$}\\
	&=\pi^!\left(P(E_{(\a-2)}))\smile P(E_\2)\smile  \tilde{P}(E_\2)\right)&\mbox{by multiplicativity of $P$}\\
	&=\pi^!(P(E_{(\a-2)}))&\\
	\end{align*}\\
	The fiber dimension of $\pi$ is $2(a-2)$, and so $\pi^!(\alpha)=0$ for each $\alpha$ of degree less than $2(a-2)$. The only term of $P(E_{(\a-2)})$ of degree at least $2(a-2)$ is $p_{\frac{a-2}{2}}(E_{(a-2)})$, and so 
	
	\begin{align*}
	P(E)\smile \pi^!\left(\sum_{i } \left[(-1)^i(\frac{1}{2\pi}\Omega_{12})^{ 2i}\right]\right) &=\pi^!(p_{\frac{a-2}{2}}(E_{(a-2)}))\\
	&=(-1)^{\frac{a-2}{2}} &\mbox{by Lemma~\ref{lem:pistar} }
	\end{align*}
	
	Thus $\tilde{P}(E)=(-1)^{\frac{a-2}{2}}\pi^!\left(\sum_{i } \left[(-1)^i(\frac{1}{2\pi}\Omega_{12})^{ 2i}\right]\right) $. Comparing terms of degree $4i$, we see $\tilde{p}_i(E)=(-1)^{\frac{a-2}{2}}\pi^!(\left[(-1)^{i+\frac{a-2}{2}}(\frac{1}{2\pi}\Omega_{12})^{ 2(i+\frac{a-2}{2})}\right])=\pi^!(\left[ (-1)^i(\frac{1}{2\pi}\Omega_{12})^{\a-2+2i}\right])$.
\end{proof}
%

Now consider the cohomology of $\Y$ with coefficients twisted by the orientation presheaf $\OO$ of $E_{2}$. As described in Appendix~\ref{app:ss}, the class $c_1(E_\2,\OO):=[\frac{1}{2\pi}\Omega_{12}]$ is the twisted Chern class of $E_\2$. That is, it's  the image of the fiber orientation class under the differential $d_2 : E_2^{0,1} \rightarrow E_2^{2,0}$ in the Leray-Serre spectral sequence of $\rho$. (Any even power of $c_1(E_\2,\OO)$ is a class in cohomology with constant coefficients).
Thus from Theorem~\ref{thm:chernweil} we have the following.
\begin{cor} \label{cor:chernweil1} For every $i$
\[\tilde p_i(E)=\pi^!((-1)^i c_1(E_\2,\OO)^{a-2+2i})\]
\end{cor}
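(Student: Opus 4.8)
The plan is to derive Corollary~\ref{cor:chernweil1} directly from Theorem~\ref{thm:chernweil} by substituting the identification of the twisted curvature class $[\frac{1}{2\pi}\Omega_{12}]$ with the twisted Chern class $c_1(E_\2,\OO)$. Concretely, Theorem~\ref{thm:chernweil} states that $\tilde p_i(E)=\pi^!\left([(-1)^i(\frac{1}{2\pi}\Omega_{12})^{a-2+2i}]\right)$, so the entire content of the corollary is the equality of cohomology classes
\[
\left[\left(\tfrac{1}{2\pi}\Omega_{12}\right)^{a-2+2i}\right]=c_1(E_\2,\OO)^{a-2+2i}
\]
inside $H^*(\Y)$ (with the appropriate twisted coefficients, noting that since $a$ is even $a-2+2i$ is even and the power lands in constant-coefficient cohomology). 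Granting that identification, one applies $\pi^!$ to both sides and multiplies by $(-1)^i$, and the corollary follows immediately.

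So the real work is to justify that $c_1(E_\2,\OO):=[\frac{1}{2\pi}\Omega_{12}]$ is the twisted Chern class, i.e.\ that the de Rham class of the curvature form representing $\Omega_1=\begin{bmatrix}0&\Omega_{12}\\-\Omega_{12}&0\end{bmatrix}$ computes the image of the fiber orientation class under the transgression $d_2\colon E_2^{0,1}\to E_2^{2,0}$ in the Leray-Serre spectral sequence of the associated circle bundle $\rho$ of $E_\2$. First I would invoke the appendix (Appendix~\ref{app:ss}) where this is set up: an oriented rank-2 real bundle with metric connection has an associated $S^1$-bundle, and the skew-symmetric connection matrix $\omega_1$ restricts on the total space to a global angular form whose exterior derivative is $-\rho^*\Omega_{12}$ (up to the $2\pi$ normalization); the standard Chern-Weil/Gysin argument then identifies $[\frac{1}{2\pi}\Omega_{12}]$ with the (twisted) Euler class $=c_1$. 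The twisting by $\OO$ enters precisely because $E_\2$ need not be globally orientable as a bundle over $\Y$ — its orientation presheaf is $\OO$ — so the angular form is only locally defined and glues into a form with values in the orientation local system; this is what forces the coefficients to be twisted, while even powers of $c_1(E_\2,\OO)$ descend to ordinary cohomology since $\OO\otimes\OO$ is trivial.

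I expect the main obstacle to be bookkeeping around the twisted coefficients rather than any substantive new idea: one must check that the parity $a-2+2i$ is even (immediate since $a$ is even), that the transfer map $\pi^!$ is compatible with the twisted coefficient system so that $\pi^!((-1)^ic_1(E_\2,\OO)^{a-2+2i})$ makes sense and agrees with $\pi^!([(-1)^i(\frac{1}{2\pi}\Omega_{12})^{a-2+2i}])$, and that the de Rham representative and the spectral-sequence transgression genuinely coincide (this is the classical fact, recorded in the appendix, that the Chern-Weil representative of $c_1$ of an oriented $2$-plane bundle equals the transgression of the fiber class). Once those compatibilities are cited from the appendices, the proof is a one-line substitution into Theorem~\ref{thm:chernweil}, so I would keep the write-up very short, essentially: ``By the discussion in Appendix~\ref{app:ss}, $[(\frac{1}{2\pi}\Omega_{12})^{a-2+2i}]=c_1(E_\2,\OO)^{a-2+2i}$; substituting into Theorem~\ref{thm:chernweil} gives the claim.''
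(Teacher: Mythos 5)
Your proposal matches the paper's own treatment: the paper derives Corollary~\ref{cor:chernweil1} from Theorem~\ref{thm:chernweil} by exactly the substitution you describe, taking $c_1(E_\2,\OO):=[\frac{1}{2\pi}\Omega_{12}]$ as justified by the discussion in Appendix~\ref{app:ss} and noting that even powers live in constant-coefficient cohomology. Your additional sketch of the angular-form/transgression argument is precisely the content the appendix records, so there is no substantive difference in approach.
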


Now we're ready to preview the combinatorial formula. We need to interpret the elements of the above formula in terms that make sense for appropriate simplicial complexes. 

Let $\rho:\Z\to\Y$ be the circle bundle associated to $E_\2$. The Chern class $c_1(\Z,\OO)$ is the same as $c_1(E_\2,\OO)$. 

The formula which we'll combinatorialize is a formula for the Poincare dual of $\tilde p_i(E)$. 
\begin{cor}\label{cor:smoothform} $\tilde p_i(E)\frown[M]=\pi_*((-1)^ic_1(\Z,\OO)^{a-2+2i}\frown[\Y])$
\end{cor}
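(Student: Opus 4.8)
The plan is to derive Corollary~\ref{cor:smoothform} from Corollary~\ref{cor:chernweil1} by applying Poincare duality, i.e.\ by capping both sides with a fundamental class and rewriting the transfer map $\pi^!$ homologically.

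First I would assemble the two inputs. Corollary~\ref{cor:chernweil1} gives, in cohomology, $\tilde p_i(E)=\pi^!\big((-1)^i c_1(E_\2,\OO)^{a-2+2i}\big)$, and the sentence preceding Corollary~\ref{cor:smoothform} identifies $c_1(\Z,\OO)=c_1(E_\2,\OO)$, since $\rho:\Z\to\Y$ is the circle bundle associated to $E_\2$. Substituting yields $\tilde p_i(E)=\pi^!\big((-1)^i c_1(\Z,\OO)^{a-2+2i}\big)$. Since $a$ is even, the exponent $a-2+2i$ is even, so $c_1(\Z,\OO)^{a-2+2i}$ lies in cohomology with untwisted $\R$-coefficients, which is exactly what makes its transfer a well-defined class in $H^*(M;\R)$.

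Second, I would invoke the compatibility of the transfer with Poincare duality. For the fiber bundle $\pi:\Y\to M$, with $M$ compact and $\Y$ therefore compact (the fiber $\Gr(2,\R^a)$ is compact) and with oriented fibers (Lemma~\ref{lem:chartsonG}, Proposition~\ref{prop:yorientable}), the transfer $\pi^!$ is by definition the conjugate of $\pi_*$ by the Poincare duality isomorphisms $-\frown[\Y]$ and $-\frown[M]$; equivalently $\pi^!(\alpha)\frown[M]=\pi_*(\alpha\frown[\Y])$ for every $\alpha$. This is recorded in Appendix~\ref{app:transfer}. Applying it to $\alpha=(-1)^i c_1(\Z,\OO)^{a-2+2i}$ and capping the identity of the previous paragraph with $[M]$ gives
\[\tilde p_i(E)\frown[M]=\pi^!\big((-1)^i c_1(\Z,\OO)^{a-2+2i}\big)\frown[M]=\pi_*\big((-1)^i c_1(\Z,\OO)^{a-2+2i}\frown[\Y]\big),\]
which is the assertion.

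The one genuine subtlety, and hence the step to get right, is the coefficient bookkeeping when $M$ is not orientable. Then $[M]$, $[\Y]$ and the cap products must be taken with coefficients twisted by the orientation local system of $M$; one uses that the fibers are oriented, so the orientation system of $\Y$ is $\pi^*$ of that of $M$, and that $c_1(\Z,\OO)^{a-2+2i}$ carries untwisted coefficients, so the transfer–Poincare-duality identity and the pairing go through verbatim. (If one restricts the statement to oriented $M$, this step is the textbook fact that $\pi^!$ is conjugate to $\pi_*$ under Poincare duality and there is nothing more to check.) The odd-dimensional case proceeds identically, with $c_1(E_\2,\OO)$ interpreted as in the remark following Theorem~\ref{thm:chernweil} and Section~\ref{sec:gettingbeyondsimplifications}.
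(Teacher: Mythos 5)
Your proposal is correct and follows the paper's own argument: substitute $c_1(\Z,\OO)=c_1(E_\2,\OO)$ into Corollary~\ref{cor:chernweil1}, cap with $[M]$, and convert $\pi^!$ into $\pi_*$ via the transfer--Poincar\'e-duality identity, which is exactly Proposition~\ref{prop:starvscap}. The remarks on twisted coefficients match the paper's remark following the corollary, so there is nothing further to add.
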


\begin{remark} If $M$ is orientable then so is $\Y$, and the expressions $[M]$ and $[\Y]$ in Corollary~\ref{cor:smoothform} represent the usual fundamental class. If $M$ is not orientable, then $[M]$ and $[\Y]$ denote the fundamental classes in homology with coefficients twisted by the orientation presheafs. Let $\DD$ denote the orientation presheaf of $M$. Because $\pi$ is orientable, the orientation presheaf of $\Y$ is just the pullback $\pi^*(\DD)$.
\end{remark}

\begin{remark} As discussed in Appendix~\ref{app:coefficients}, $c_1(\Z,\OO)^{a-2+2i}$ and the formula of Corollary~\ref{cor:smoothform} can be interpreted as expressions in rational cohomology.
\end{remark}

\begin{proof}[Proof of Corollary~\ref{cor:smoothform}]
\begin{align*}
\tilde p_i(E)\frown[M]&=\pi^!((-1)^ic_1(\Z,\OO)^{a-2+2i})\frown[M]&\mbox{ by Corollary~\ref{cor:chernweil1}}\\
&=\pi_*((-1)^ic_1(\Z,\OO)^{a-2+2i}\frown[\Y])&\mbox{by Proposition~\ref{prop:starvscap}}
\end{align*}
\end{proof}

\subsection{Bundles of odd dimension}\label{sec:gettingbeyondsimplifications}

Here we sketch the modifications needed when the dimension $a$ of $\xi:E\to M$ is odd. Our first problem is that $\Gr(2,\R^a)$ is not orientable.  
Let $\OO$ denote the orientation presheaf of $\tau_2:\EGr_2\to\Gr(2,\R^a)$. (This is the pullback of the presheaf we denoted by $\OO$ in the previous section.)
Recall that $\DD$ is the orientation presheaf on $M$.
The proofs of Lemma~\ref{lem:chartsonG} and Proposition~\ref{prop:yorientable} can be adapted to prove the following.

\begin{lemma} \label{lem:chartsonGodda} The orientation presheaf of $\Gr(2, \R^a)$ is $\OO^{\otimes(a-2)}$.
\end{lemma}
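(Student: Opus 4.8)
The plan is to mimic the structure of the even-dimensional orientability argument, keeping track of the twisting that now appears instead of being absorbed by squares. First I would recall that the orientation presheaf of a manifold is the sheafification of $U \mapsto H_{\dim}(U; \mathbb{Z})$, and that to identify it as $\OO^{\otimes(a-2)}$ it suffices to produce an open cover (here the charts $W_{x,y}$) together with, for each $W_{x,y}$, a choice of local orientation of $\Gr(2,\R^a)$ and a local trivialization of $\OO$, such that on overlaps the two sets of transition data agree. The charts $W_{x,y} \cong \R^{2(a-2)}$ are themselves orientable, so the subtlety lives entirely in the transition functions $g_{(w,x),(y,z)}$, and the claim amounts to: the sign of the Jacobian of $g_{(w,x),(y,z)}$ equals the transition function of $\OO^{\otimes(a-2)}$ between the corresponding trivializations.

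Next I would redo the Jacobian computation of Lemma~\ref{lem:chartsonG} without assuming $a$ even. From that proof, the Jacobian of $g = g_{(1,2),(y,z)}$ has block form $\begin{pmatrix} J_1 & 0 \\ J_2 & J_3 \end{pmatrix}$ where $J_3$ is block-diagonal with $a-4$ copies of $N_{y,z}^{-1}$, so $\det Dg = \det(J_1)\,\det(N_{y,z})^{-(a-4)}$; the case-check shows $\det(J_1)$ has the sign of $\det(N_{y,z})^{-2}$ times a positive factor, giving $\sign(\det Dg) = \sign(\det N_{y,z})^{a-2} = \sign(\det N_{y,z})^{a}$ (parity only matters mod 2, and $a \equiv a-2 \pmod 2$). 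Now I must compare this with the twisting of $\OO$. Over $W_{x,y}$, the bundle $\EGr_2$ is trivialized by the frame consisting of the two rows of the $\{x,y\}$-reduced matrix; on $W_{w,x} \cap W_{y,z}$ the change of frame from the $\{w,x\}$-reduced presentation $N$ to the $\{y,z\}$-reduced presentation $N_{y,z}^{-1}N$ is left multiplication by $N_{y,z}^{-1}$, whose determinant has sign $\sign(\det N_{y,z})$. Hence the transition function of the orientation presheaf $\OO$ of $\EGr_2$ between these charts is $\sign(\det N_{y,z})$, and that of $\OO^{\otimes(a-2)}$ is $\sign(\det N_{y,z})^{a-2}$ — exactly matching $\sign(\det Dg)$. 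This establishes that the simplicial isomorphism of presheaves sending the orientation class of $W_{x,y}$ to the $(a-2)$-th tensor power of the chosen local orientation class of $\EGr_2|_{W_{x,y}}$ is compatible with all transition maps, which is the assertion.

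I would also carry out the analogue of the Proposition~\ref{prop:yorientable} reduction to make sure nothing new intervenes for the global statement over $\Gr$: write $A$ as a product of elementary matrices and check the three cases. For a permutation matrix the transition map on charts is conjugate to $\mathrm{diag}(\tilde A, \tilde A)$, whose determinant $\det(\tilde A)^2$ is positive, and the corresponding change of frame of $\EGr_2$ is by a signed permutation matrix whose determinant-sign, raised to the $(a-2)$, records the same data; for a diagonal matrix the transition Jacobian has determinant $\prod_i d_i^{-2} \cdot (\text{positive})$ up to the overall scaling by $(\det A)^{?}$ on the two-dimensional frame — here one must be careful that the frame of $\EGr_2$ rescales by $d_x d_y$, contributing $\sign(d_x d_y)^{a-2}$, and check this equals the sign of the chart Jacobian; for a transvection both signs are $+1$. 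The main obstacle, as in the even case, is the bookkeeping in Case~2 (diagonal $A$) and in the case-by-case evaluation of $\det(J_1)$: one has to verify that the extra scalar factors coming from the $2$-dimensional frame of $\EGr_2$ are precisely the ones that were squared away (hence invisible) when $a$ was even, so that their $(a-2)$-th powers reproduce the chart Jacobian signs. Once that matching is confirmed in every case, the lemma follows, and it specializes to Lemma~\ref{lem:chartsonG} when $a$ is even since then $\OO^{\otimes(a-2)}$ is the constant presheaf $\mathbb{Z}$.
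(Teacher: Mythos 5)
Your proposal is correct and follows exactly the route the paper intends: the paper offers no written proof of this lemma, saying only that the proofs of Lemma~\ref{lem:chartsonG} and Proposition~\ref{prop:yorientable} can be adapted, and your argument is precisely that adaptation (chart Jacobian sign $\sign(\det N_{y,z})^{a-2}$ matched against the frame-change sign $\sign(\det N_{y,z})$ for $\EGr_2$, raised to the $(a-2)$nd power). Note only that for this particular lemma, which concerns $\Gr(2,\R^a)$ alone, the chart-transition computation suffices; the elementary-matrix analysis from Proposition~\ref{prop:yorientable} is extra bookkeeping that becomes relevant for the subsequent statement about $\xi_{(a-2)}$ over $\Y$, not for this one.
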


\begin{prop} The orientation presheaf of $\xi_{(a-2)}$  is $\OO^{\otimes(a-2)}\otimes\pi^*\DD$.
\end{prop}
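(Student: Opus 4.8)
The statement to prove is: the orientation presheaf of $\xi_{(a-2)}:E_{(a-2)}\to\Y$ is $\OO^{\otimes(a-2)}\otimes\pi^*\DD$, where $\OO$ is the orientation presheaf of $\tau_2:\EGr_2\to\Gr(2,\R^a)$ (pulled back to $\Y$) and $\DD$ is the orientation presheaf of $M$. The key point is that the orientation presheaf of a vector bundle is determined fiberwise by the sign of transition functions, so I would compute these signs locally.

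First I would set up local trivializations. Take a cover $\{U_i\}$ of $M$ with local coordinates $\phi_i:U_i\times\R^a\to\xi^{-1}(U_i)$ as in the proof of Proposition~\ref{prop:yorientable}; over $U_i$ this identifies $\pi^{-1}(U_i)$ with $U_i\times\Gr(2,\R^a)$, and correspondingly identifies $E_{(a-2)}|_{\pi^{-1}(U_i)}$ with the pullback of $\EGr_{(a-2)}\to\Gr(2,\R^a)$ along the projection $U_i\times\Gr(2,\R^a)\to\Gr(2,\R^a)$. So over each such patch the orientation presheaf of $\xi_{(a-2)}$ is the (pullback of the) orientation presheaf of $\EGr_{(a-2)}\to\Gr(2,\R^a)$, tensored with nothing coming from $U_i$ (since that factor is a product). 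The content is entirely in how these patches glue, i.e. in the transition over $U_i\cap U_j$.

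Next, the transition on $\xi^{-1}(p)$ for $p\in U_i\cap U_j$ is right multiplication by a matrix $A=A_{ij}(p)$, which on $\EGr_{(a-2)}$ acts fiberwise by the induced map on $V^\perp$ (with respect to the ambient metric): a 2-plane $V=\row(N)$ goes to $\row(NA)$, and the orthogonal complement changes accordingly. The recipe is then:
\begin{itemize}
\item Over $U_i\cap U_j$, the twisting of the base $\Y$ is $\pi^*\DD$ restricted there — this is $\mathrm{sign}\det A$ on the $M$-directions — combined with the twisting of $\Gr(2,\R^a)$ itself, which by Lemma~\ref{lem:chartsonGodda} is $\OO^{\otimes(a-2)}$.
\item Over a fixed 2-plane the fiber of $\xi_{(a-2)}$ is $V^\perp\cong\R^{a-2}$, and the sign by which $A$ (more precisely the induced map $V^\perp\to (VA)^\perp$, composed with the chart identifications) acts on this fiber contributes an additional local sign $\epsilon_{ij}$.
\end{itemize}
So the claim reduces to the bookkeeping identity
\[
(\text{or.\ presheaf of }\xi_{(a-2)})
= (\text{or.\ presheaf of }\EGr_{(a-2)})\otimes\pi^*\DD
= \OO^{\otimes(a-2)}\otimes\pi^*\DD,
\]
the second equality being exactly the statement that the orientation presheaf of $\EGr_{(a-2)}\to\Gr(2,\R^a)$ is $\OO^{\otimes(a-2)}$. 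That in turn follows from the multiplicativity of orientation presheaves applied to $\EGr_{(a-2)}\oplus\EGr_2$ being trivial: the orientation presheaf of the trivial bundle is trivial, the orientation presheaf of $\EGr_2$ is $\OO$ by definition, and the orientation presheaf of $\Gr(2,\R^a)$ (i.e.\ of its tangent bundle) is $\OO^{\otimes(a-2)}$ by Lemma~\ref{lem:chartsonGodda}; but $T\Gr(2,\R^a)\cong\Hom(\EGr_2,\EGr_{(a-2)})\cong \EGr_2^*\otimes\EGr_{(a-2)}$ as a rank-$2(a-2)$ bundle, whose orientation presheaf is $(\text{or.\ of }\EGr_2)^{\otimes(a-2)}\otimes(\text{or.\ of }\EGr_{(a-2)})^{\otimes 2}=\OO^{\otimes(a-2)}$ (the square kills the second factor), forcing the first factor to be $\OO^{\otimes(a-2)}$ and hence the orientation presheaf of $\EGr_{(a-2)}$ to be trivially related — wait, one must be careful that $\OO^{\otimes(a-2)}$ squared is trivial, so this identifies the orientation presheaf of $\EGr_{(a-2)}$ only up to a square, which is fine since it shows it equals $\OO^{\otimes(a-2)}$ iff $(a-2)$ is odd, i.e.\ $a$ odd, exactly the case of interest; for $a$ even it's trivial, matching Proposition~\ref{prop:yorientable}.

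**Main obstacle.** The delicate point is keeping the chart identifications straight: the isomorphism $T_V\Gr(2,\R^a)\cong\Hom(V,V^\perp)$ and the fiberwise action of $A$ on $V^\perp$ must be reconciled with the explicit coordinate charts $\Phi_{x,y}$ used in Lemmas~\ref{lem:chartsonG} and~\ref{lem:chartsonGodda}, so that the "extra sign $\epsilon_{ij}$" above is identified with a genuine power of $\OO$ and nothing is double-counted. Concretely I would run through the same three cases of $A$ (permutation, diagonal, single off-diagonal entry) as in the proof of Proposition~\ref{prop:yorientable}, this time tracking the sign of the induced map on $V^\perp$ rather than on all of $\Gr(2,\R^a)$, and check that in each case the product of (sign on base $\Y$) and (sign on fiber $V^\perp$) matches $\OO^{\otimes(a-2)}\otimes\pi^*\DD$. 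Since permutation and diagonal elementary matrices are where the nontrivial $\mathrm{sign}\det A$ and the $\OO$-twisting appear, those are the cases to do carefully; the transvection case contributes trivially to everything, as in Case 3 there.
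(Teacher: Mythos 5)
Your overall plan (trivialize over the $U_i$ and track transition signs, as in Proposition~\ref{prop:yorientable}) is exactly the adaptation the paper gestures at, but as written the core of your argument is either deferred or miscounted. The ``bookkeeping identity'' you reduce to is asserted rather than derived: the sign $\epsilon_{ij}$ by which $A_{ij}$ acts on the $V^\perp$-fibers relative to the chart identifications is the entire content of the proposition, and you postpone it to a promised three-case check that is never carried out. Moreover, your first bullet imports the twisting of $\Gr(2,\R^a)$ itself ($\OO^{\otimes(a-2)}$, Lemma~\ref{lem:chartsonGodda}) into the accounting; that twisting is what you need to orient the \emph{manifold} $\Y$ (hence for $[\Y]$ and the transfer map), but it is irrelevant to the orientation presheaf of the \emph{vector bundle} $\xi_{(a-2)}$, so your reduction mixes two different objects. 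Finally, identifying ``$\sign\det A$ on the $M$-directions'' with $\pi^*\DD$ tacitly assumes $w_1(E)=w_1(TM)$: that is fine in the intended application $\xi=TM$ (or $TM\oplus\mathbf 1$), where the cocycle $\sign\det A_{ij}$ does represent $\DD$, but for a general $\xi$ the correct factor is the pullback of the orientation presheaf of $\xi$, and this hypothesis should be made explicit rather than smuggled in.

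The digression meant to show that the orientation presheaf of $\EGr_{(a-2)}$ is $\OO^{\otimes(a-2)}$ is broken: in $T\Gr(2,\R^a)\cong\EGr_2^*\otimes\EGr_{(a-2)}$ the second factor enters $w_1$ with coefficient $\rank\EGr_2=2\equiv 0$, so this isomorphism gives no information at all about $\EGr_{(a-2)}$, and the conclusions ``equals $\OO^{\otimes(a-2)}$ iff $a-2$ is odd'' and ``for $a$ even it's trivial'' are false (the orientation presheaf of $\EGr_{(a-2)}$ is $\OO$ for every $a\geq 3$, directly from triviality of $\EGr_2\oplus\EGr_{(a-2)}$, which is the fact you state at the start of that sentence and then abandon). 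That same fact, used fiberwise over $\Y$, is all you need and removes the case analysis entirely: since $\xi_2\oplus\xi_{(a-2)}=\pi^*\xi$ (the splitting uses the fixed global metric), orientation presheaves multiply to give that the orientation presheaf of $\xi_{(a-2)}$ is $\OO\otimes\pi^*(\mbox{orientation presheaf of }\xi)$, which for $a$ odd and $\xi=TM$ (resp.\ $TM\oplus\mathbf 1$) is $\OO^{\otimes(a-2)}\otimes\pi^*\DD$; and if what is actually wanted for the displayed transfer map is the orientation presheaf of the manifold $\Y$, use $T\Y\cong\pi^*TM\oplus(\xi_2^*\otimes\xi_{(a-2)})$, which yields $\pi^*\DD\otimes\OO^{\otimes(a-2)}$ for all $a$.
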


Thus
we get a transfer map $\pi^!:H^*(\Y,\OO^{\otimes(a-2)})\to H^*(M,\R)$ as a composition
\[H^*(\Y,\OO^{\otimes(a-2)})\stackrel{\frown[\Y]}{\to}H_*(\Y,\pi^*\DD)\stackrel{\pi_*}{\to}H_*(M,\DD)
\stackrel{PD_M^{-1}}{\to}H^*(M,\R)\]

The analog to Lemma~\ref{lem:p1}
 is the following.
\begin{lemma}\label{lem:p1odd} If $a$ is odd then $p_{\lfloor\frac{a-2}{2}\rfloor}(\EGr_{(a-2)})=(-1)^{\lfloor\frac{a-2}{2}\rfloor}[\frac{1}{2\pi}\Omega_{12}]^{2\lfloor\frac{a-2}{2}\rfloor}$, and  $(p_{\lfloor\frac{a-2}{2}\rfloor}(\EGr_{(a-2)})[\frac{1}{2\pi}\Omega_{12}])\cap[\Gr(2,\R^a)]=(-1)^{\lfloor\frac{a-2}{2}\rfloor}$
\end{lemma}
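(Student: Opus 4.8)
The plan is to mimic the proof of Lemma~\ref{lem:p1}, but to work around the non-orientability of $\Gr(2,\R^a)$ (here $a$ odd, so $a-2$ is odd and $\lfloor\frac{a-2}{2}\rfloor = \frac{a-3}{2}$). For the first assertion, I would note that $\EGr_{(a-2)}$ has odd rank $a-2$, so it does not have an Euler class in the ordinary sense; instead $\EGr_{(a-2)}\cong L\oplus \EGr_{(a-2)}'$ where $\EGr_{(a-2)}'$ has even rank $a-3 = 2\lfloor\frac{a-2}{2}\rfloor$ (split off a line subbundle, e.g.\ using the metric and a section of a suitable line bundle, or just observe $p_j$ of an odd bundle equals $p_j$ of its even-rank ``stabilization''). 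Then $p_{\lfloor\frac{a-2}{2}\rfloor}(\EGr_{(a-2)}) = p_{\lfloor\frac{a-2}{2}\rfloor}(\EGr_{(a-2)}')$, and since the top Pontrjagin class of an even-rank bundle is the square of its Euler class, and Euler classes are multiplicative, we get $p_{\lfloor\frac{a-2}{2}\rfloor}(\EGr_{(a-2)}) = e(\EGr_{(a-2)}'\oplus\EGr_{(a-2)}')^{2}$... — more directly, I would invoke the Chern-Weil expression: from the connection/curvature description in Section~\ref{sec:chernweil}, the relevant top Pontrjagin class of $\EGr_{(a-2)}$ is computed by the same determinant formula as in Lemma~\ref{lem:rank2pi1} applied to $\EGr_2$ (using $P(\EGr_{(a-2)}) = \tilde P(\EGr_2)$), giving $p_{\lfloor\frac{a-2}{2}\rfloor}(\EGr_{(a-2)}) = (-1)^{\lfloor\frac{a-2}{2}\rfloor}[\tfrac{1}{2\pi}\Omega_{12}]^{2\lfloor\frac{a-2}{2}\rfloor}$ by expanding $(1+p_1(\EGr_2))^{-1}$ and matching the top-degree term, exactly as in the proof of Theorem~\ref{thm:chernweil}.

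For the second, pairing assertion, I would set up the analog of the section argument. The class $(p_{\lfloor\frac{a-2}{2}\rfloor}(\EGr_{(a-2)})\,[\tfrac{1}{2\pi}\Omega_{12}])$ has degree $2\lfloor\frac{a-2}{2}\rfloor + 2 = (a-3)+2 = a-1 = \dim\Gr(2,\R^a)$, so the cap product with the twisted fundamental class $[\Gr(2,\R^a)]$ (in homology with coefficients in $\OO^{\otimes(a-2)} = \OO$ by Lemma~\ref{lem:chartsonGodda}, since $a-2$ is odd) lands in $H_0 = \R$ and the pairing makes sense. I would interpret $p_{\lfloor\frac{a-2}{2}\rfloor}(\EGr_{(a-2)})$ as the (twisted) Euler class of $\EGr_{(a-2)}'\oplus\EGr_{(a-2)}'$ where $\EGr_{(a-2)}' = \EGr_{(a-2)}/L$ for a chosen real line bundle $L$, and $[\tfrac{1}{2\pi}\Omega_{12}]$ as the twisted Euler class $e(L)$ of that complementary line bundle (or of $\EGr_2$, suitably oriented) — so the whole product is $e$ of a rank-$(a-1)$ bundle, and its pairing with $[\Gr(2,\R^a)]$ is computed by counting, with signs and multiplicities, the zeros of a generic section. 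I would use essentially the section $s(V) = (V, \pi_V(\e_2), \dots, \pi_V(\e_a))$ (projections onto $V^\perp$ together with one more coordinate capturing the $\EGr_2$/line-bundle factor), arrange that it vanishes only at $V_0 = \row(\mathbf 0\mid I)\in W_{a-1,a}$, and compute the local degree in the coordinates $\R^{2(a-2)}\cong W_{a-1,a}$ from Section~\ref{sec:Yorient} — the same curves $t\e_i \mapsto \tfrac{-t}{1+t^2}\e_{\cdot}$ as in Lemma~\ref{lem:p1}, giving a sign $(-1)^{\lfloor\frac{a-2}{2}\rfloor}$ from the permutation sorting the output basis.

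The main obstacle will be the careful bookkeeping of the twisted orientation: unlike the even case, there is no global orientation on $\Gr(2,\R^a)$, so ``orientation preserving'' for $\tilde s_*$ has to be phrased relative to the local orientation sheaf, and one must check that the section $s$ and the local trivializations near $V_0$ are compatible with the identification of coefficient systems in Lemma~\ref{lem:chartsonGodda} and the preceding proposition. Concretely, I expect the work to be in verifying that the extra line-bundle factor contributes a twist exactly matching $\OO^{\otimes(a-2)} = \OO$ over the single zero $V_0$, so that the local intersection number is a well-defined element of the $\OO$-twisted $\R$ at $V_0$, and that its sign agrees with the $(-1)^{\lfloor\frac{a-2}{2}\rfloor}$ coming from the permutation computation — after which the second formula follows exactly as in the proof of Lemma~\ref{lem:p1}, with $\lfloor\frac{a-2}{2}\rfloor$ replacing $\frac{a-2}{2}$. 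The first formula is then consistent with the second via the Chern-Weil computation of $p_{\lfloor\frac{a-2}{2}\rfloor}$ above.
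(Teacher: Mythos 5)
Your first assertion is handled essentially as in the paper once you discard the opening tangent about splitting off a line subbundle: the ``more direct'' route you settle on (triviality of $\EGr_{(a-2)}\oplus\EGr_2$, hence $P(\EGr_{(a-2)})=\tilde P(\EGr_2)=\sum_i(-1)^i[\tfrac{1}{2\pi}\Omega_{12}]^{2i}$, then read off the degree-$4\lfloor\frac{a-2}{2}\rfloor$ term) is exactly the paper's argument, and the line-bundle detour is unnecessary (and its claim that the relevant Pontrjagin class is a square of an Euler class is not needed and not justified in the odd-rank setting).

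The second assertion is where your plan has genuine gaps. First, the degree bookkeeping is off: $p_{\lfloor\frac{a-2}{2}\rfloor}(\EGr_{(a-2)})$ has cohomological degree $4\lfloor\frac{a-2}{2}\rfloor=2(a-3)$, so the class you are capping has degree $2(a-2)=\dim\Gr(2,\R^a)$, not $a-1$. Second, $[\tfrac{1}{2\pi}\Omega_{12}]$ is the degree-$2$ twisted Euler class of the rank-$2$ bundle $\EGr_2$; it cannot be reinterpreted as the Euler class of a real line bundle $L$ split off from $\EGr_{(a-2)}$ (that would have degree $1$), so the ``Euler class of a rank-$(a-1)$ bundle'' framing does not exist. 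Third, your proposed section $(\pi_V(\e_2),\dots,\pi_V(\e_a))$ with $\pi_V$ the projection onto $V^\perp$ is a section of $\bigoplus^{a-1}\EGr_{(a-2)}$, of rank $(a-1)(a-2)$, which is the wrong bundle; worse, for $a\geq 4$ it has no zeros at all, since it vanishes only when all of $\e_2,\dots,\e_a$ lie in the $2$-plane $V$. The repair is simpler than the orientation-sheaf bookkeeping you anticipate: using the first assertion, the pairing reduces to showing $[\tfrac{1}{2\pi}\Omega_{12}]^{a-2}\frown[\Gr(2,\R^a)]=1$; since $[\tfrac{1}{2\pi}\Omega_{12}]$ is the twisted Euler class of $\EGr_2$, its $(a-2)$nd power is the twisted Euler class of $\bigoplus^{a-2}\EGr_2$, whose rank is exactly $2(a-2)$, and one counts zeros of the section $s(V)=(\pi_V(\e_1),\dots,\pi_V(\e_{a-2}))$ with $\pi_V$ the orthogonal projection onto $V$ (not $V^\perp$); this vanishes only at $V_0=\langle\e_{a-1},\e_a\rangle$, and the local computation in the $W_{a-1,a}$ chart, as in Lemma~\ref{lem:p1}, gives degree $1$. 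Your remark that $\OO^{\otimes(a-2)}=\OO$ for $a$ odd, matching the orientation presheaf of $\Gr(2,\R^a)$ from Lemma~\ref{lem:chartsonGodda}, is the correct reason the signed zero count is well defined despite non-orientability, and that is the only twisting issue one actually has to check.
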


\begin{proof} 
%
Because $\EGr_{(a-2)}\oplus \EGr_2$ is a trivial bundle, we have $P(\EGr_{(a-2)})P(\EGr_2)=1$. From Lemma~\ref{lem:rank2pi1} we know $P(\EGr_2)=1+[\frac{1}{2\pi}\Omega_{12}]^2$, and so $P(\EGr_{(a-2)})=\sum_i(-1)^i[\frac{1}{2\pi}\Omega_{12}]^{2i}$. Thus $p_{\lfloor\frac{a-2}{2}\rfloor}(E_{(a-2)})=(-1)^{\lfloor\frac{a-2}{2}\rfloor}[\frac{1}{2\pi}\Omega_{12}]^{2{\lfloor\frac{a-2}{2}\rfloor}}=(-1)^{\lfloor\frac{a-2}{2}\rfloor}[\frac{1}{2\pi}\Omega_{12}]^{a-3}$.

It remains to show that 
$[\frac{1}{2\pi}\Omega_{12}]^{a-2}\frown[\Gr(2,\R^a)]=1$. (Here the fundamental class $[\Gr(2,\R^a)]$ is in $H_*(M,\DD)$.) Since $[\frac{1}{2\pi}\Omega_{12}]$ is the twisted Euler class of $\EGr_2$, the class $[\frac{1}{2\pi}\Omega_{12}]^{a-2}$ is the twisted Euler class of $\bigoplus^{a-2} \EGr_2$. As in the proof of Lemma~\ref{lem:p1}, we can find the Poincare dual of this class by counting zeroes of a section. Define $s:\Gr(2,\R^a)\to \bigoplus^{a-2} \EGr_2$ by $s(V)=(V,(\pi_V(\e_1),\ldots,\pi_V(\e_{a-2})))$, where $\pi_V$ is orthogonal projection onto $V$. This is a section with a single zero at $V_0:=\langle \e_{a-1}, \e_a\rangle$. A calculation similar to that in the proof of Lemma~\ref{lem:p1} shows this zero to have degree 1.
\end{proof}

Thus the analog to Lemma~\ref{lem:pistar} is the following.
\begin{lemma} For all $a$ we have $\pi^!(p_{\lfloor\frac{a-2}{2}\rfloor}(E_{(a-2)})[\frac{1}{2\pi}\Omega_{12}])=(-1)^{\lfloor\frac{a-2}{2}\rfloor}$.
\end{lemma}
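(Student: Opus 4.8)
The plan is to reduce the identity to the fibrewise computations of Lemmas~\ref{lem:p1} and~\ref{lem:p1odd} by exploiting that $\pi^!$ is integration along the fibre and that, in top fibre degree, integration along the fibre evaluates a class over a point of the base by restricting to the fibre over that point and pairing with its fundamental class. First I would dispose of the case $a$ even: then $a-2$ is even, $p_{\frac{a-2}{2}}(E_{(a-2)})$ already sits in degree $2(a-2)=\dim\Gr(2,\R^a)$ equal to the fibre dimension of $\pi$, the factor $[\frac{1}{2\pi}\Omega_{12}]$ contributes nothing (and all coefficient presheaves are untwisted), so the assertion is exactly Lemma~\ref{lem:pistar}. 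So assume $a$ is odd, so that $\lfloor\frac{a-2}{2}\rfloor=\frac{a-3}{2}$ and the class $\alpha:=p_{\frac{a-3}{2}}(E_{(a-2)})\,[\frac{1}{2\pi}\Omega_{12}]\in H^*(\Y,\OO^{\otimes(a-2)})$ has degree $2(a-3)+2=2(a-2)=\dim\Gr(2,\R^a)$, again the fibre dimension of $\pi$; hence $\pi^!(\alpha)\in H^0(M,\R)$.

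Next I would unwind the definition of $\pi^!$ as the composite
\[
H^*(\Y,\OO^{\otimes(a-2)})\stackrel{\frown[\Y]}{\to}H_*(\Y,\pi^*\DD)\stackrel{\pi_*}{\to}H_*(M,\DD)\stackrel{PD_M^{-1}}{\to}H^*(M,\R)
\]
and evaluate $\pi^!(\alpha)$ at a point $m$ of $M$ (one connected component at a time). Trivialising $\xi$ over a contractible neighbourhood $U$ of $m$ identifies $\pi^{-1}(U)$ with $U\times\Gr(2,\R^a)$ and carries $E_{(a-2)}$ to $U\times\EGr_{(a-2)}$ and $\xi_2$ to $U\times\EGr_2$; over $U$ all the orientation presheaves become constant, and by Lemma~\ref{lem:chartsonGodda} the restriction of $\OO^{\otimes(a-2)}$ to the fibre $\Gr(2,\R^a)$ is the orientation presheaf of $\Gr(2,\R^a)$, so the twists occurring in the definition of $\pi^!$ agree with those in the twisted Poincare duality of the fibre. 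Writing $\iota\colon\Gr(2,\R^a)\hookrightarrow\Y$ for the inclusion of the fibre over $m$, this gives $\pi^!(\alpha)|_m=\iota^*(\alpha)\frown[\Gr(2,\R^a)]$, the cap product taken in $H_0(\Gr(2,\R^a),\OO^{\otimes(a-2)}\otimes\OO^{\otimes(a-2)})=H_0(\Gr(2,\R^a),\R)=\R$. Since $\iota^*E_{(a-2)}=\EGr_{(a-2)}$ and $\iota^*\xi_2=\EGr_2$, we have $\iota^*(\alpha)=p_{\frac{a-3}{2}}(\EGr_{(a-2)})\,[\frac{1}{2\pi}\Omega_{12}]$, and so by the second assertion of Lemma~\ref{lem:p1odd}
\[
\pi^!(\alpha)|_m=\iota^*(\alpha)\frown[\Gr(2,\R^a)]=(-1)^{\frac{a-3}{2}}=(-1)^{\lfloor\frac{a-2}{2}\rfloor},
\]
which is the claim.

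The step I expect to be the main obstacle is the identity $\pi^!(\alpha)|_m=\iota^*(\alpha)\frown[\Gr(2,\R^a)]$: it is the twisted-coefficient analogue of the familiar statement that a class of top fibre degree integrates along the fibre to its fibrewise pairing, and it requires keeping careful track of the several presheaves involved ($\OO^{\otimes(a-2)}$ and $\pi^*\DD$ on $\Y$, $\DD$ on $M$, and the orientation presheaves of $\Y$ and of the fibre). Working in the trivialising chart $U\times\Gr(2,\R^a)$ reduces this, via a K\"unneth argument and naturality of $\pi_*$ and of $PD$ under $\{m\}\hookrightarrow U$, to the untwisted case treated in Appendix~\ref{app:transfer}. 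Everything else is a degree count or a direct appeal to Lemma~\ref{lem:p1odd}.
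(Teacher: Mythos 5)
For odd $a$ your argument is essentially the proof the paper intends (and leaves implicit, as the analog of the one-line proof of Lemma~\ref{lem:pistar}): the class $p_{\lfloor\frac{a-2}{2}\rfloor}(E_{(a-2)})[\frac{1}{2\pi}\Omega_{12}]$ has top fiber degree $2(a-2)$, integration along the fiber evaluates it fiberwise, and the second assertion of Lemma~\ref{lem:p1odd} gives the value $(-1)^{\lfloor\frac{a-2}{2}\rfloor}$. Your extra care in checking that the twists match --- restricting $\OO^{\otimes(a-2)}$ to a fiber and comparing with the orientation presheaf of $\Gr(2,\R^a)$ via Lemma~\ref{lem:chartsonGodda}, and localizing $\pi^!$ at a point through a trivialization --- is exactly the bookkeeping the paper glosses over, and it is sound.

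Your disposal of the even case, however, is not right as written. When $a$ is even the factor $[\frac{1}{2\pi}\Omega_{12}]$ does not ``contribute nothing'': the product then has degree $2(a-2)+2$, so $\pi^!$ of it lies in $H^2(M,\R)$, and the asserted equality with the constant $(-1)^{\frac{a-2}{2}}$ is not ``exactly Lemma~\ref{lem:pistar}''. In fact the literal statement fails for even $a$: taking $M$ to be a point, the left side vanishes for degree reasons while the right side is $\pm 1$. This reflects an imprecision in the lemma's ``for all $a$'' --- for even $a$ the Euler-class factor should be absent (in which case the statement is Lemma~\ref{lem:pistar}), and the displayed identity is only needed, and only literally correct, when $a$ is odd, which is the only case used in the odd-dimensional proof of Theorem~\ref{thm:chernweil}. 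You should flag this and restrict your proof to odd $a$ (or note the needed correction) rather than assert that the extra factor is harmless.
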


The proof of Theorem~\ref{thm:chernweil} for odd $a$ then follows the same lines as the proof for even $a$, by showing that $P(E)\smile\pi^!(\sum_i(-1)^i\left[\frac{1}{2\pi}\Omega_{12}\right]^{2i+1})=(-1)^{\lfloor\frac{a-2}{2}\rfloor}$.

\section{CD manifolds and oriented matroid flags}

Throughout the following we refer to ``the oriented matroid associated to a vector arrangement" and ``the oriented matroid associated to a subspace". The reader unfamiliar with these should first read Appendix~\ref{app:OM}.

\subsection{Combinatorial  charts and CD manifolds}

The bridge from smooth to combinatorial thinking comes via {\em combinatorial differential manifolds}, or {\em CD manifolds}, a concept that is implicit in~\cite{GM} and was made explicit in the expository paper~\cite{MacP}.

Throughout the following, a {\em simplicial complex} is an abstract simplicial complex, i.e., a collection of finite sets closed under taking subsets. If $X$ is a simplicial complex, $\|X\|$  denotes the geometric realization of $X$ in $\R^{X^0}$, and for each $\Delta\in X$, $\|\Delta\|$ denotes the corresponding closed simplex. Thus an element of $\|\Delta\|$ is a sum $\sum_{e\in\Delta} a_e e$, where each $a_e$ is a nonnegative real number and $\sum_e a_e=1$. By the {\em star} of a simplex $\Delta$ of $X$, we mean the closed star (as a simplicial complex). For each $p\in\|X\|$ we let $\Delta_p$ denote the smallest element of $X$ such that $p\in\|\Delta_p\|$. 


\begin{defn} Let $X$ be a simplicial manifold. A {\em smoothing} of $X$, or {\em smooth structure} on $X$, is a smooth manifold $M$ and a homeomorphism $\eta:\|X\|\to M$ such that, for each simplex $\sigma$ in $X$, the restriction of $\eta$ to $\|\sigma\|$ extends to a smooth embedding of an open neighborhood of $\|\sigma\|$ in its linear span.
\end{defn}

\begin{defn}\cite{MacP} Let $X$ be a simplicial manifold of dimension $n$, and let $\Delta$ be a simplex of $X$. A {\em flattening} of $X$ at $\Delta$ is a simplex-wise linear homeomorphism $f:\|\Star\Delta\|\to U\subset V$ of the star of $\Delta$ onto a neighborhood $U$ of the origin in an $n$-dimensional real vector space $V$ such that the image of the interior of $\|\Delta\|$ contains the origin.
\end{defn}

Let $\eta:\|X\|\to M$ be a smoothing. For each $p$ in $\|X\|$ the smoothing 
induces a flattening $f_p:\|\Star \Delta_p\|\to T_{\eta(p)}M$ of $X$ as follows. For each vertex $v$ of $\Star(\Delta_p)$, let $\lambda:[0,1]\to\|\Delta\cup\{v\}\|$ be the linear path from $p$ to $v$. Then we define $f_p(v)=(\eta\circ\lambda)'(0)$ and define $f_p$ to be linear on simplices.

We think of this flattening as a piecewise-linear  analog to a coordinate chart at $p$. The idea of CD manifolds is to go one step further and replace each flattening with an oriented matroid derived from that flattening.

There are two ways one can derive an oriented matroid from a flattening $f:\|\Star(\Delta)\|\to V$. The first is simply to take the rank $n$ oriented matroid of the vector arrangement $(f(v):v\in\Star(\Delta)^0)$. This is the motivation for the {\em linear OM coordinate charts} described below. The second way is to 
take the rank $n+1$ oriented matroid of the vector arrangement $((f(v),1):v\in\Star(\Delta)^0)$ in $V\oplus \R$. This is the motivation for the {\em affine OM coordinate charts} described below. Gelfand and MacPherson used affine OM coordinate charts in~\cite{GM}, while MacPherson used linear OM coordinate charts in~\cite{MacP}.  Neither approach is stronger or weaker than the other, and a combinatorial formula for the Pontrjagin classes can be derived from either. We'll present both.

\begin{defn} Let $X$ be a simplicial manifold of dimension $n$, and let $\Delta$ be a simplex of $X$. An {\em affine OM chart} at $\Delta$ is a rank $n+1$ oriented matroid $\M$ on elements $X^0$ satisfying all of the following.
	\begin{enumerate}
		\item The nonloop\footnote{We use the term {\em loop} that is standard in the matroid literature rather than the admittedly more intuitive term {\em zero element} used in~\cite{GM}.}
 elements of $\M$ are exactly the vertices of $\mathrm{star}(\Delta)$.
		\item  The sign vector with value $+$ on all vertices of $\mathrm{star}(\Delta)$ and 0 on all other elements is a covector of $\M$.
		\item If $\sigma$ is a simplex in $\mathrm{star}(\Delta)$ then $\sigma$ is independent in $\M$.
		\item For each  pair $\sigma, \tau$ of simplices in $\mathrm{star}(\Delta)$ there is a covector $Y$ of $\M$ such that  $Y(\sigma)\subseteq\{0,+\}$ and $Y(\tau)\subseteq\{0,-\}$.
	\end{enumerate}
\end{defn}

\begin{defn}\cite{MacP} Let $X$ be a simplicial manifold of dimension $n$, and let $\Delta$ be a simplex in $X$. A {\em linear OM  chart} at $\Delta$ is a rank $n$ oriented matroid $\M$ on elements $X^0$  satisfying all of the following.
	\begin{enumerate}
		\item All elements of $X^0-\mathrm{star}(\Delta)^0$ are loops in $\M$.
			\item $\Delta$ is dependent in $\M$.
		\item If $\sigma$ is a simplex in the boundary of $\mathrm{star}(\Delta)$ then $\sigma$ is independent in $\M$, and no other nonloop element is in the convex hull of $\sigma$.
	\end{enumerate}
\end{defn}

Given a flattening of $\Delta$ at $X$, the corresponding linear OM  chart is a strong map image of the corresponding affine OM  chart. An affine OM  chart may have several linear OM  charts as strong map images: see Figure~\ref{fig:afftolin} for a rank 3 affine OM chart with three linear OM charts as strong map images. Also, a linear OM chart might be a strong map image of several affine OM charts: see Figure~\ref{fig:lintoaff} for a rank 2 linear OM chart which is a strong map image of three different affine OM charts.

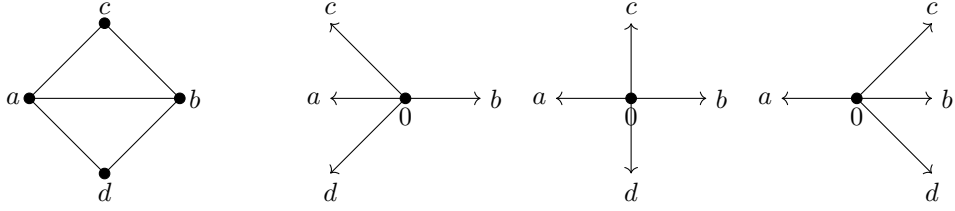
\begin{figure}
	\begin{tikzpicture}
	\draw (-1,0)--(1,0);
	\draw(-1,0)--(0,1);
	\draw(0,1)--(1,0);
	\draw(-1,0)--(0,-1);
	\draw(0,-1)--(1,0);
	\filldraw[black] (-1,0) circle (2pt) node[anchor=east] {$a$};
	\filldraw[black] (1,0) circle (2pt) node[anchor=west] {$b$};
	\filldraw[black] (0,1) circle (2pt) node[anchor=south] {$c$};
	\filldraw[black] (0,-1) circle (2pt) node[anchor=north] {$d$};
	
	\draw[<->]  (3,0)node[anchor=east] {$a$}--(5,0) node[anchor=west] {$b$} ;
	\draw[->] (4,0)--(3,1) node[anchor=south] {$c$};
	\draw[->] (4,0)--(3,-1) node[anchor=north] {$d$};
	\filldraw[black] (4,0) circle (2pt) node[anchor=north] {$0$};
	
	\draw[<->]  (6,0)node[anchor=east] {$a$}--(8,0) node[anchor=west] {$b$};
	\draw[->] (7,0)--(7,1) node[anchor=south] {$c$};
	\draw[->] (7,0)--(7,-1) node[anchor=north] {$d$};
	\filldraw[black] (7,0) circle (2pt) node[anchor=north] {$0$};
	
	\draw[<->]  (9,0)node[anchor=east] {$a$}--(11,0) node[anchor=west] {$b$};
	\draw[->] (10,0)--(11,1) node[anchor=south] {$c$};
	\draw[->] (10,0)--(11,-1) node[anchor=north] {$d$};
	\filldraw[black] (10,0) circle (2pt) node[anchor=north] {$0$};
	\end{tikzpicture}
	\caption{A rank 3 affine OM  chart and three rank 2 linear OM charts\label{fig:afftolin}}
\end{figure}

\begin{figure}
	\begin{tikzpicture}
	\draw[<->]  (-1,0)node[anchor=east] {$a$}--(1,0) node[anchor=west] {$b$} ;
	\draw[->] (0,0)--(-1,1) node[anchor=south] {$c$};
	\draw[->] (0,0)--(-1,-1) node[anchor=north] {$d$};
	\filldraw[black] (0,0) circle (2pt) node[anchor=north] {$0$};
	
	\draw (3,0)--(5,0);
	\draw(3,0)--(2.5,1);
	\draw(2.5,1)--(5,0);
	\draw(3,0)--(2.5,-1);
	\draw(2.5,-1)--(5,0);
	\filldraw[black] (3,0) circle (2pt) node[anchor=east] {$a$};
	\filldraw[black] (5,0) circle (2pt) node[anchor=west] {$b$};
	\filldraw[black] (2.5,1) circle (2pt) node[anchor=south] {$c$};
	\filldraw[black] (2.5,-1) circle (2pt) node[anchor=north] {$d$};
	
	\draw (6,0)--(8,0);
	\draw(6,0)--(6,1);
	\draw(6,1)--(8,0);
	\draw(6,0)--(6,-1);
	\draw(6,-1)--(8,0);
	\filldraw[black] (6,0) circle (2pt) node[anchor=east] {$a$};
	\filldraw[black] (8,0) circle (2pt) node[anchor=west] {$b$};
	\filldraw[black] (6,1) circle (2pt) node[anchor=south] {$c$};
	\filldraw[black] (6,-1) circle (2pt) node[anchor=north] {$d$};
	
	\draw (9,0)--(11,0);
	\draw(9,0)--(10,1);
	\draw(10,1)--(11,0);
	\draw(9,0)--(10,-1);
	\draw(10,-1)--(11,0);
	\filldraw[black] (9,0) circle (2pt) node[anchor=east] {$a$};
	\filldraw[black] (11,0) circle (2pt) node[anchor=west] {$b$};
	\filldraw[black] (10,1) circle (2pt) node[anchor=south] {$c$};
	\filldraw[black] (10,-1) circle (2pt) node[anchor=north] {$d$};
	\end{tikzpicture}
	\caption{A rank 2 linear OM chart and three rank 3 affine OM charts\label{fig:lintoaff}}
\end{figure}
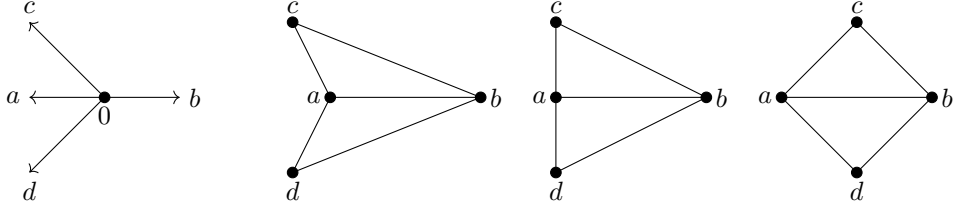

\begin{example}\label{exam:torus1} Let $M$ be the torus $\R^2/\mathbb Z^2$ and $\eta: \|X\|\to M$ be the ``usual" triangulation, the image under the quotient map of the triangulation of $[0,1]^2$ given in Figure~\ref{fig:torus1}. 

\begin{figure}
\begin{tikzpicture}
\draw (0,0)--(3,0)--(3,3)--(0,3)--(0,0) node[below] at (0,0){$(0,0)$} node[below] at (3,0){$(1,0)$} node[above] at (0,3){$(0,1)$} node[above] at (3,3){$(1,1)$};
\draw (0,1)--(3,1);
\draw (0,2)--(3,2);
\draw (1,0)--(1,3);
\draw (2,0)--(2,3);
\draw (0,2)--(1,3);
\draw (0,1)--(2,3);
\draw (0,0)--(3,3);
\draw (1,0)--(3,2);
\draw(2,0)--(3,1);
\node[right] at (1,1.85){$a$} node[above] at (1,3){$b$} node[left] at (0,2){$c$} node [left] at (0,1){$d$} node[left] at (1,1.15){$e$} node[left] at (2,2.15){$f$} node[above] at (2,3){$g$};
\end{tikzpicture}
\caption{A triangulation of the torus, with names for some vertices \label{fig:torus1}}
\end{figure}
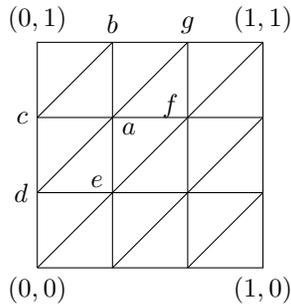

The vector arrangement arising from the flattening at $a$ is shown in Figure~\ref{fig:torus2}. (Note that $a$ itself corresponds to the vector $\mathbf 0$.)
\begin{figure}
\begin{tikzpicture}
\draw[->] (0,0)--(1,0) node[right] at (1,0){$f$};
\draw[->] (0,0)--(1,1) node[right] at (1,1){$g$};
\draw[->] (0,0)--(0,1) node[above] at (0,1){$b$};
\draw[->] (0,0)--(-1,0) node[left] at (-1,0){$c$};
\draw[->] (0,0)--(-1,-1) node[left] at (-1,-1){$d$};
\draw[->] (0,0)--(0,-1) node[below] at (0,-1){$e$};
\node at (-.1,.1){$a$};
\end{tikzpicture}
\caption{The vector arrangement from the flattening at $a$  \label{fig:torus2}}
\end{figure}
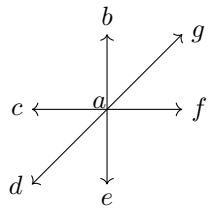

The  linear OM chart at $a$ is the unique rank 2 oriented matroid on elements $X^0$ with nonloop elements $b,c,d,e,f,g$ in which $b$ and $e$ are antiparallel, $c$ and $f$ are antiparallel,  $d$ and $g$ are antiparallel, and in any realization either the clockwise order or counterclockwise order of the nonzero elements is $b,c,d,e,f,g$.   The affine OM chart at $a$ is the unique acyclic rank 3 oriented matroid on elements $X^0$  with nonloop elements $a,b,c,d,e,f,g$ in which $a$ is in the convex hull of each of  $\{b,e\}$, $\{c,f\}$, and $\{d,g\}$ and in any realization by an affine point configuration the elements $b,c,d,e,f,g$ are the vertices of a convex hexagon, with the indicated cyclic order.

Now consider the vector arrangements associated to points on the edge $ab$. Figure~\ref{fig:torus3} shows the arrangements at 3 points along this edge: first, at a point close to vertex $a$, second, at the midpoint of the edge, and third, near the point $b$. 

\begin{figure}
\begin{tikzpicture}
\draw[->] (0,0)--(1,.8) node[right] at (1,.8){$g$};
\draw[->] (0,0)--(0,.8) node[above] at (0,.8){$b$};
\draw[->] (0,0)--(-1,-.2) node[left] at (-1,-.2){$c$};
\draw[->] (0,0)--(0,-.2) node[below] at (0,-.2){$a$};

\begin{scope}[xshift=1.5in, yshift=.1in]
\begin{scope}[scale=.5]
\draw[->] (0,0)--(0,1) node[above] at (0,1){$b$};
\draw[->] (0,0)--(0,-1) node[below] at (0,-1){$a$};
\draw[->] (0,0)--(2,1) node[right] at (2,1){$g$};
\draw[->] (0,0)--(-2,-1) node[left] at (-2,-1){$c$};
\end{scope}
\end{scope}

\begin{scope}[xshift=3in, yshift=.1in]
\draw[->] (0,0)--(1,.2) node[right] at (1,.2){$g$};
\draw[->] (0,0)--(0,.2) node[above] at (0,.2){$b$};
\draw[->] (0,0)--(-1,-.8) node[left] at (-1,-.8){$c$};
\draw[->] (0,0)--(0,-.8) node[below] at (0,-.8){$a$};
\end{scope}
\end{tikzpicture}
\caption{Vector arrangements from flattenings at three points on $ab$  \label{fig:torus3}}
\end{figure}
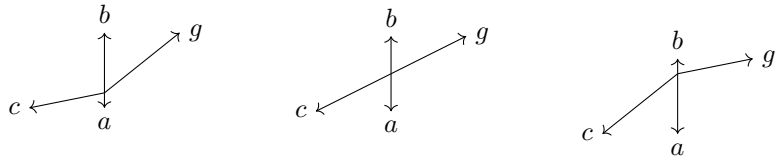

These three vector arrangements realize the three different linear OM charts that arise from flattenings at points on $ab$. (To see the difference in the oriented matroids, note that the convex hull of $\{c,g\}$ is $\{b,c,g\}$ in the first chart, is $\{c,g\}$ in the  the second chart, and is $\{a,c,g\}$ in the third chart.) All three of these arrangements give the same affine OM chart, namely the unique rank 3 oriented matroid arising from a convex quadrilateral $agbc$ in an affine plane.

Some things to notice:
\begin{enumerate}
\item All points in the interior of a maximal simplex have the same linear OM chart and the same affine OM chart. (This is true for any smoothing of any simplicial manifold.)
\item In this example, the affine OM chart is constant on the interior of each simplex. There is a regular cell decomposition $\hat X$ of $\|X\|$ so that the linear OM chart is constant on the interior of each cell. (The subdivision just divides each 1-simplex in half.) 

In general, we say that a smoothing of a simplicial complex $Y$  is {\em tame} (with respect to linear resp.\  affine charts) if there is a regular cell decomposition $\hat Y$ of $\|Y\|$ such that the resulting linear resp.\  affine OM chart is constant on the interior of each cell. If  we have such a regular cell decomposition, then we denote the chart associated to a cell $\sigma$ by $\Phi(\sigma)$.

\item For cells $\sigma'\subset\sigma$ of $\hat X$,
$\Phi(\sigma')$ is obtained from $\Phi(\sigma)$ by moving elements into more special position and then perhaps replacing some loops with nonloops. (This is true for any smoothing of any simplicial manifold.)
\end{enumerate}
\end{example}

A triple $(X,\hat X,\Phi)$ as in the previous example is an example of a {\em combinatorial differential manifold}.

%

When $\hat X$ is a subdivision of $\|X\|$ and $\sigma\in\hat X$, we  let $\Delta_\sigma$ denote the smallest simplex of $X$ with $\sigma\subseteq\|\Delta\|$.

\begin{defn} A {\em combinatorial differential (CD) manifold (with an affine resp.\  linear  atlas)} is a simplicial manifold $X$ together with a regular cell complex $\hat{X}$ which is a subdivision  of $\|X\|$ and an assignment of a linear resp.\  affine OM chart $\Phi(\sigma)$ at $\Delta_\sigma$ to each cell $\sigma$ of $\hat{X}$, 
such that, for each pair $\tau\subset\sigma$ of cells of $\hat{X}$, the restriction of $\Phi(\tau)$ to the elements of $\Phi(\sigma)$ is a weak map image of $\Phi(\sigma)$.
\end{defn}

\begin{remark} This is not as general as the definition of CD manifold given in ~\cite{MacP} and~\cite{mythesis}. Both of those sources  pursue a purely combinatorial theory. They remove the restriction that $X$ be a simplicial manifold and instead add combinatorial conditions that aspire to be a full characterization of the combinatorics of coordinate charts. In~\cite{MacP} it was conjectured that these conditions do, in fact, imply that $X$ is a simplicial manifold. This conjecture was proved in~\cite{mythesis} for CD manifolds in which all charts are Euclidean.
\end{remark}

\subsection{Flattenings and classifying maps}
 
 The classifying space for rank $n$ real vector bundles is the Grassmannian $\Gr(n,\R^\infty)$. Thus, a smooth $n$-dimensional manifold $M$ has a {\em classifying map} $M\to \Gr(n,\R^\infty)$ such that the pullback of the canonical bundle over $\Gr(n,\R^\infty)$ is isomorphic to $TM$. In this section we'll describe how a smoothing $\eta: \|X\|\to M$ gives such a  map, specifically from $M$ to $\Gr(n,\R^{X^0})$.

As in Appendix~\ref{app:OM},  it will be convenient to view a matrix as an arrangement of column vectors and to allow the columns to be indexed by an arbitrary finite set $S$, not necessarily $[m]$. Thus we'll feel free to view $(\vv_i:i\in S)$ as either an arrangement of vectors or as an object with, for instance, a row space (contained in $\R^S$).
  
 The flattenings induced by a smooth triangulation of $M$ give such vector arrangements: as we have seen, at each $p\in M$ we get a vector arrangement $(f_p(v):v\in\Star(\Delta_p)^0)$, and by setting $f_p(v)=0$ for each $v\in {X^0}-\Star(\Delta_p)^0$ we get a vector arrangement $(f_p(v):v\in {X^0})$.  But the resulting map $M\to \Gr(n,\R^{X^0})$ taking each $p$ to $\row(f_p(v):v\in {X^0})$
  is not continuous: typically, as we go from the interior of some $\eta(\Delta)$ to its boundary, the number of nonzero vectors in our arrangement increases. 
 
 To get a continuous map $M\to \Gr(n,\R^{X^0})$, we'll extend our vector arrangements a bit.
 
 \begin{defn} Let  $i\in {X^0}$, and let $\epsilon>0$. The {\em $\epsilon$-star} $\|\epsilon\Star(i)\|$ of $i$  is the subset of $\R^{X^0}$ consisting of points  of the form $\sum_{f\in\tau} a_f f$, where $\tau\in\Star(i)$, $\sum_{f\in\tau} a_f =1$, 
 $a_f\geq 0$ for all $f\neq i$, and $a_i>-\epsilon$.
 \end{defn}
 
 Because $\eta:\|X\|\to M$ is smooth on closed simplices, the restriction of $\eta$ to a single star $\|\Star(i)\|$ extends to a smooth map $\eta_i$ from some $\epsilon$-star of $i$ to $M$. Fix such a map $\eta_i$ for each $i\in {X^0}$, and let $U_i$ be the image of $\eta_i$.  Also, fix a smooth map $b_i:M\to[0,1]$ that has value 1 on $\eta(\|\Star(i)\|)$ and value 0 on the complement of $U_i$. For each $p\in M$ we define a vector arrangement $(\vv_{i,p}:i\in {X^0})$ in $T_p(M)$ as follows. 
 \begin{itemize} 
 \item If $p\in U_i$ then let $\lambda_i$ be the linear path from $\eta_i^{-1}(p)$ to $i$ in $\|\epsilon\Star(i)\|$, and let $\vv_{i,p}=b_i(p)(\eta_i\circ\lambda_i)'(0)$.
 \item If $p\not\in U_i$ then let $\vv_{i,p}=0$.
 \end{itemize}

 \begin{example} Looking again at Example~\ref{exam:torus1}, let's consider $U_e$. Fix  a homeomorphism $\eta_e$ from an $\epsilon$-star of $e$ to the shaded area in the torus shown in Figure~\ref{fig:epsilonstar} that coincides with $\eta$ on $\|\Star(e)\|$ and  is linear with respect to each simplex in the torus.
 	\begin{figure}
 		\begin{tikzpicture}[fill opacity=0.5]
		\fill[gray] (1.2,0)--(2.2,.8)--(2.2,2.2)--(.8,2.2)--(0,1.2)--(0,0)--(1.2,0);
		\fill[gray] (1.2,3)--(1.2,2.8)--(0,2.8)--(0,3)--(1.2,3);
		\fill[gray]  (3,1.2)--(2.8,1.2)--(2.8,0)--(3,0)--(3,1.2);
		\fill[gray]  (2.8,3)--(2.8,2.8)--(3,2.8)--(3,3)--(2.8,3);
 		\draw (0,0)--(3,0)--(3,3)--(0,3)--(0,0) ;
 		\draw (0,1)--(3,1);
 		\draw (0,2)--(3,2);
 		\draw (1,0)--(1,3);
 		\draw (2,0)--(2,3);
 		\draw (0,2)--(1,3);
 		\draw (0,1)--(2,3);
 		\draw (0,0)--(3,3);
 		\draw (1,0)--(3,2);
 		\draw(2,0)--(3,1);
 		\node[right] at (1,1.85){$a$} node[above] at (1,3){$b$} node[left] at (0,2){$c$} node [left] at (0,1){$d$} node[left] at (1,1.15){$e$} node[left] at (2,2.15){$f$} node[above] at (2,3){$g$};
		\draw[dashed] (1.2,0)--(2.2,.8)--(2.2,2.2)--(.8,2.2)--(0,1.2);
		\draw[dashed] (1.2,3)--(1.2,2.8)--(0,2.8);
		\draw[dashed] (3,1.2)--(2.8,1.2)--(2.8,0);
		\draw[dashed] (2.8,3)--(2.8,2.8)--(3,2.8);
 		\end{tikzpicture}
 		\caption{The set $U_e$\label{fig:epsilonstar}}
 	\end{figure}
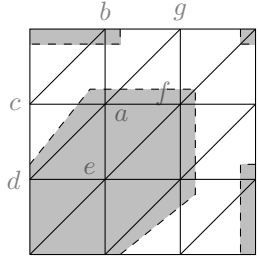
	Consider a point $p$ on $\eta(\|\{a,c\}\|)$ that lies in $U_e$ and satisfies $b_e(p)\neq 0$. The line segment from $\eta_e^{-1}(p)$ to $e$ in $\|\epsilon\Star(e)\|$ is mapped by $\eta_e$ to the union of two line segments, one in $\eta(\|\{a,c,d\}\|)$ and one in $\eta(\|\{a,d,e\}\|)$. A tangent at $p$ to the line segment in $\eta(\|\{a,c,d\}\|)$  gives the vector $\vv_{e,p}$.
	
	In a similar way,  each point $p$ that lies in the interior of  $\eta(\|\{a,c\}\|)$ and is sufficiently close to $a$ has associated vector arrangement $(\vv_{i,p}:i\in {X^0})$ whose nonloop elements are $\{a,b,c,d,e,f,g\}$.
 \end{example}
 
   Let $A_\ll:M\to \Gr(n,\R^{X^0})$ be the map sending each $p$ to $\row(\vv_{i,p}:i\in {X^0})$.

 \begin{prop} \label{prop:pullbacklin} $TM$ is isomorphic to the pullback of the canonical bundle by $A_\ll$.
 \end{prop}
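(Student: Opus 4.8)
The plan is to exhibit the isomorphism through one surjective bundle map and then read off the pullback. Write $\EGr_n$ for the canonical rank-$n$ bundle over $\Gr(n,\R^{X^0})$, viewed as the subbundle of the trivial bundle $\underline{\R^{X^0}}$ whose fiber over a plane $V$ is $V$ itself; then $A_\ll^{*}\EGr_n$ is the subbundle of $\underline{\R^{X^0}}$ over $M$ whose fiber at $p$ is $A_\ll(p)=\row(\vv_{i,p}:i\in X^0)$. Define a bundle map $L\colon\underline{\R^{X^0}}\to TM$ over $M$ by taking $L_p\colon\R^{X^0}\to T_pM$ to be the linear map with $L_p(\e_i)=\vv_{i,p}$. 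The only nonformal step is to see that $L$ is continuous, i.e.\ that each $\vv_{i,\cdot}$ is a continuous section of $TM$: the velocity $(\eta_i\circ\lambda_i)'(0)$ depends only on the germ of $\eta_i$ along the simplices containing $i$, on which the simplexwise-smooth pieces of $\eta_i$ agree, so this vector varies continuously with $p$, and $b_i$ is continuous. This is precisely what the $\epsilon$-star construction is for, and it is the one place where care is needed.

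Next I would show $L$ is fiberwise onto. Fix $p\in M$ and identify $\|X\|$ with $M$ via $\eta$; choose an $n$-simplex $\Delta'$ of $X$ with $\Delta_p\subseteq\Delta'$, which exists because $X$ is a simplicial $n$-manifold. For every vertex $v$ of $\Delta'$ one has $p\in\|\Delta_p\|\subseteq\|\Delta'\|\subseteq\|\Star(v)\|$, so $b_v(p)=1$ and $\eta_v$ coincides with $\eta$ near $p$; hence $\vv_{v,p}=f_p(v)$, the value on $v$ of the flattening $f_p\colon\|\Star(\Delta_p)\|\to T_pM$ induced by $\eta$. Since $f_p$ restricts to an injective affine map on the $n$-simplex $\|\Delta'\|$, its linear part is an isomorphism $\R^n\xrightarrow{\sim}T_pM$, so the $n+1$ vectors $\{f_p(v):v\in\Delta'\}$ affinely span $T_pM$ and therefore linearly span it. Thus $L_p$ is surjective, which in particular confirms that $A_\ll$ really does take values in $\Gr(n,\R^{X^0})$.

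The remaining steps are purely formal. As a continuous surjective bundle map of constant rank $n$, $L$ has kernel a subbundle $\ker L\subseteq\underline{\R^{X^0}}$ of rank $|X^0|-n$; with respect to the standard constant inner product on $\underline{\R^{X^0}}$ the orthogonal complement $(\ker L)^{\perp}$ is a rank-$n$ subbundle whose fiber at $p$ is $(\ker L_p)^{\perp}=\row(\vv_{i,p}:i\in X^0)=A_\ll(p)$. Hence $A_\ll$ is continuous (it is the classifying map of this subbundle) and $(\ker L)^{\perp}=A_\ll^{*}\EGr_n$ as subbundles of $\underline{\R^{X^0}}$ over $M$. Finally, the restriction of $L$ to $(\ker L)^{\perp}=A_\ll^{*}\EGr_n$ is a continuous bundle map onto $TM$ which on each fiber is the isomorphism $(\ker L_p)^{\perp}\xrightarrow{\sim}\mathrm{im}(L_p)=T_pM$; a fiberwise isomorphism of vector bundles is an isomorphism, so $A_\ll^{*}\EGr_n\cong TM$. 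Equivalently, $TM\cong\underline{\R^{X^0}}/\ker L\cong(\ker L)^{\perp}=A_\ll^{*}\EGr_n$.

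I expect the main obstacle to be the continuity of $L$ in the first step; once that is granted, the surjectivity of $L$ is immediate from the nondegeneracy of the flattenings, and the identification of the pullback with $TM$ is just bundle-theoretic bookkeeping.
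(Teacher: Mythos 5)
Your proof is correct, and at its core it rests on the same fact as the paper's: the matrix $(\vv_{e,p}:e\in X^0)$ has full rank $n$, so $T_pM$ is canonically identified with its row space $A_\ll(p)$, continuously in $p$. The packaging is dual, though. The paper writes down the isomorphism directly into the pullback, $i(p,\y)=(p,\row(\y\cdot\vv_{e,p}:e\in X^0))$, i.e.\ $\y\mapsto \y^{T}M_p$, and gets injectivity (hence bijectivity) from full rank; you instead go from the trivial bundle onto $TM$ via $L_p(\e_i)=\vv_{i,p}$, get surjectivity from full rank, and then identify $A_\ll^{*}\EGr_n$ with $(\ker L)^{\perp}$ and restrict $L$. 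What your route buys is that the ``bundle-theoretic bookkeeping'' (constant-rank kernel is a subbundle, continuity of $A_\ll$ as the classifying map of $(\ker L)^{\perp}$) is made explicit, and, more substantively, you actually justify the full-rank claim that the paper only asserts: choosing an $n$-simplex $\Delta'\supseteq\Delta_p$, noting $b_v(p)=1$ and $\vv_{v,p}=f_p(v)$ for its vertices, and using nondegeneracy of the flattening to see that these $n+1$ vectors affinely, hence linearly, span $T_pM$. That verification (together with your remark on continuity of the sections $\vv_{i,\cdot}$, which is exactly what the cutoffs $b_i$ and the $\epsilon$-stars are for) is a genuine improvement in completeness over the one-line proof in the text, at the cost of a slightly longer formal argument.
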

 
 \begin{proof} For each $p\in M$,  $A_\ll(p)$ 
  is the vector space of all vectors $(\y\cdot\vv_{e,p}: e\in X^0)$ with $\y\in T_pM$. An isomorphism $i$ from $TM$ to the pullback of the canonical bundle over $\Gr(n,\R^{X^0})$ is given by $i(p,\y)=(p,\row(\y\cdot\vv_{e,p}: e\in X^0))$. This map is continuous and linear on fibers, and it's an injection (and therefore a bijection) because the matrix $(\vv_{e,p}:e\in {X^0})$ is full rank.
 \end{proof}

 \begin{defn} $\Gr_\aa(n+1,\R^S)$ is the set of all $V\in \Gr(n+1,\R^S)$ such that $(1,1,\ldots,1)\in V$.
  \end{defn} 
  
Once again, from a triangulation of a smooth manifold $M$ we get a vector arrangement in $T_pM$ for each $p\in M$. By adding a last component of 1 to each vector we get a vector arrangement in $T_pM\oplus\R$. As in the linear case we can use $\epsilon$-stars to get a continuous map $A_\aa:M\to\Gr_\aa(n+1,\R^{X^0})$.
By much the same proof as for Proposition~\ref{prop:pullbacklin}, we get the following.

 \begin{prop}\label{prop:pullbackaffine} $TM\oplus\mathbf 1$ is isomorphic to the pullback of the canonical bundle by $A_\aa$.
 \end{prop}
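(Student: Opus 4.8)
The plan is to run the proof of Proposition~\ref{prop:pullbacklin} one dimension higher, with $\R$ adjoined throughout. Write $\gamma$ for the canonical bundle over $\Gr(n+1,\R^{X^0})$. By construction $A_\aa(p)$ is the row space of the $(n+1)\times X^0$ matrix $N_p$ whose $i$-th column is $(\vv_{i,p},1)\in T_pM\oplus\R$. The bottom row of $N_p$ is $(1,\dots,1)$, so $(1,\dots,1)\in A_\aa(p)$ and $A_\aa$ does land in $\Gr_\aa(n+1,\R^{X^0})$; it is continuous because $N_p$ varies continuously with $p$ and (granting the rank claim below) has constant rank $n+1$. Reading off the row space, the fibre of $A_\aa^*\gamma$ over $p$ is
\[
A_\aa(p)=\{\,(\y\cdot\vv_{i,p}+t:i\in X^0)\ :\ (\y,t)\in T_pM\oplus\R\,\}.
\]

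The single new ingredient, compared with the linear case, is that $N_p$ has rank $n+1$; equivalently, the vectors $(\vv_{i,p},1)$, $i\in X^0$, span $T_pM\oplus\R$. I would derive this from two facts: (a) the one already underlying Proposition~\ref{prop:pullbacklin}, that the $\vv_{i,p}$ span $T_pM$; and (b) that $0\in T_pM$ is a convex combination, with strictly positive weights, of the vectors $\vv_{i,p}$ for $i$ a vertex of $\Delta_{\eta^{-1}(p)}$. Fact (b) is immediate from the construction: on those vertices $b_i(p)=1$, so $\vv_{i,p}$ is the flattening value $f_{\eta^{-1}(p)}(i)$; since $f_{\eta^{-1}(p)}$ is simplex-wise affine with $f_{\eta^{-1}(p)}(\eta^{-1}(p))=0$, substituting the barycentric expansion of $\eta^{-1}(p)$ gives $\sum_i a_i\vv_{i,p}=0$ with all $a_i>0$ and $\sum_i a_i=1$. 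By (b), $0$ lies in the affine span of $\{\vv_{i,p}:i\in X^0\}$, so by (a) that affine span is all of $T_pM$; and whenever a finite subset of a vector space $W$ has affine span $W$, appending a $1$ to each of its vectors yields a set spanning $W\oplus\R$. Hence $\rank N_p=n+1$ for every $p$, so $A_\aa^*\gamma$ is a rank-$(n+1)$ bundle.

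With this in hand the isomorphism is built exactly as in Proposition~\ref{prop:pullbacklin}: define $\iota\colon TM\oplus\mathbf 1\to A_\aa^*\gamma$ by $\iota(p,(\y,t))=(p,\,(\y\cdot\vv_{i,p}+t:i\in X^0))$. This lands in the fibre over $p$ by the display above, is continuous (the $\vv_{i,p}$ vary continuously, as in Proposition~\ref{prop:pullbacklin}), is linear on fibres, and is injective on fibres --- if $\y\cdot\vv_{i,p}+t=0$ for all $i$ then $(\y,t)$ is orthogonal to every $(\vv_{i,p},1)$, hence $(\y,t)=0$ by the spanning statement. An injective map of bundles of equal rank is an isomorphism, which proves the proposition. (If desired one can note that $\iota$ carries the trivial summand $\mathbf 1$ onto the tautological trivial line $\R\cdot(1,\dots,1)\subseteq A_\aa(p)$ built into the definition of $\Gr_\aa$, and $TM$ onto a complement.) The only step requiring genuine thought rather than transcription is fact~(b) --- that adjoining the all-ones row raises the rank by exactly one; everything else is the linear argument with $\R$ appended.
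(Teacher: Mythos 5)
Your proof is correct and follows essentially the same route as the paper, which simply states that the affine case goes ``by much the same proof'' as Proposition~\ref{prop:pullbacklin}: you transcribe that argument with the extra coordinate and supply the one genuinely new point, that appending the all-ones row makes the matrix $(\vv_{i,p},1)_{i\in X^0}$ have rank $n+1$, which your affine-span argument via $f_{\eta^{-1}(p)}(\eta^{-1}(p))=0$ establishes correctly.
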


\subsection{The MacPhersonian\label{sec:MacP}}

Throughout the following, the order complex of a poset $P$ (that is, the simplicial complex of all totally ordered subsets of $P$) is denoted $\Cx P$. In particular, if $P$ is the poset of cells in a regular cell complex then $\Cx P$ is its barycentric subdivision. If $f:P\to Q$ is a poset map then $\Cx f:\Cx P\to\Cx Q$ is the induced map of abstract simplicial complexes. To streamline notation we will denote $\|\Cx P\|$ as $\|P\|$ and denote the map $\|\Cx f\|$ from $\|P\|$ to $\|Q\|$ by $\|f\|$.

As described in Appendix~\ref{app:OM}, a strong map image of an oriented matroid is analogous to a subspace of a vector space. Thus the set of all rank $n$ strong map images of a fixed rank $m$ oriented matroid should be analogous to $\Gr(n,\R^m)$.  This analogy can be pursued in various ways (see Section~\ref{sec:eric}). The particular such set of interest to us is the set of strong map images of the {\em coordinate oriented matroid}, the unique rank $|S|$ oriented matroid with elements $S$, for $S$ a finite set. The coordinate oriented matroid on elements $S$  is realized by the  coordinate vectors in $\R^S$. Every oriented matroid  is a strong map image of the coordinate oriented matroid on the same set of elements. Thus the ``oriented matroid Grassmannians" of interest to us are given by the following definition.

\begin{defn} The {\em MacPhersonian} $\MacP(n,S)$ is the poset of all rank $n$ oriented matroids with elements $S$, ordered by weak maps.
  
  $\MacP_\aa(n+1, S)$ is the set of all $\M\in\MacP(n+1,S)$ such that the sign vector with all coordinates $+$ is a covector of $M$.

\end{defn}

Using the correspondence between subspaces of $\R^S$ and realizable oriented matroids described in Appendix~\ref{app:OM},  we get  maps $\mu:\Gr(n, \R^S)\to\MacP(n,S)$ and  $\mu_\aa: \Gr_\aa(n+1,\R^S)\to\MacP_\aa(n+1, S)$. 

\begin{prop} \label{prop:muproperties} The maps $\mu$ and $\mu_\aa$ are semialgebraic and upper semicontinuous. That is:
\begin{enumerate}
\item The preimage of each element of $\MacP(n,S)$ resp.\  $\MacP_\aa(n+1, S)$ is a semialgebraic set.
\item For each $V\in \Gr(n, \R^S)$ there is an open neighborhood $U$ of $V$ in $\Gr(n, \R^S)$ such that $\mu(W)\leadsto\mu(V)$ for each $W\in U$.
\end{enumerate}
\end{prop}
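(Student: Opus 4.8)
The plan is to verify the two assertions separately, starting from the explicit description of $\mu$ and $\mu_\aa$ via sign data of maximal minors, since these are the algebraic invariants that detect the oriented matroid of a realizable configuration. First I would set up coordinates: for $V\in\Gr(n,\R^S)$ choose a matrix representative, so $V$ is described by its Pl\"ucker coordinates $(p_\sigma)$ indexed by $n$-subsets $\sigma\subseteq S$; the oriented matroid $\mu(V)$ is determined by the function $\sigma\mapsto\sign(p_\sigma)$ (the chirotope), up to a global sign. Passing to the Grassmannian means working with the standard affine charts $W_\sigma\cong\R^{n(|S|-n)}$ on which one Pl\"ucker coordinate is normalized to $1$ and the others become polynomials in the chart coordinates. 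The point is that the chirotope, hence $\mu(V)$, is locally constant on the locus where no $p_\sigma$ vanishes and more generally is determined by which polynomial inequalities $p_\sigma>0$, $p_\sigma<0$, $p_\sigma=0$ hold.

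For part (1), semialgebraicity: I would argue that $\mu^{-1}(\M)$ is, in each affine chart, exactly the set of points where the finitely many polynomials $p_\sigma$ have the signs prescribed by the chirotope of $\M$ (for one of the two global sign choices), which is by definition a semialgebraic set; a finite union over charts and over the two sign choices is again semialgebraic, and intersecting with the semialgebraic subset $\Gr_\aa(n+1,\R^S)\subseteq\Gr(n+1,\R^S)$ (cut out by the linear condition that $(1,\dots,1)$ lie in the row space, equivalently vanishing of appropriate $(n+2)\times(n+2)$ minors) handles $\mu_\aa$. One must be slightly careful that $\Gr(n,\R^S)$ itself is a real projective variety, so ``semialgebraic subset'' should be taken in the sense of its standard affine atlas; this is the routine bookkeeping.

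For part (2), upper semicontinuity: given $V$ with chirotope $\chi$, I would let $Z=\{\sigma: p_\sigma(V)=0\}$ be the set of bases where the Pl\"ucker coordinate vanishes, and $N=\{\sigma: p_\sigma(V)\neq 0\}$ its complement. By continuity of the polynomials $p_\sigma$ in a chart, there is an open neighborhood $U$ of $V$ on which $p_\sigma$ keeps its nonzero sign for every $\sigma\in N$ (and, by shrinking, on which the global sign normalization is consistent). For $W\in U$ the chirotope $\chi_W$ then agrees with $\chi$ on $N$ and may take any value (including $0$) on $Z$. Translating ``$\chi_W$ agrees with $\chi$ off $Z$ and is arbitrary on $Z$'' into oriented matroid language is exactly the statement that $\mu(W)$ is a weak map image of $\mu(V)$, i.e. $\mu(W)\leadsto\mu(V)$ — here I would cite the characterization of weak maps in terms of chirotopes from Appendix~\ref{app:OM}. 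The same argument applies verbatim to $\mu_\aa$ inside $\Gr_\aa$.

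The main obstacle I anticipate is not any single hard estimate but rather making the ``chirotope up to global sign'' bookkeeping clean: one has to pin down the sign ambiguity consistently across affine charts and check that weak maps of oriented matroids are correctly characterized by the chirotope condition ``$\chi_W(\sigma)=\chi_V(\sigma)$ whenever $\chi_W(\sigma)\neq 0$'' — a standard fact about weak maps but one that deserves an explicit reference. Everything else reduces to the elementary observation that a polynomial is locally sign-constant away from its zero set, which is precisely what gives both semialgebraicity and upper semicontinuity in one stroke.
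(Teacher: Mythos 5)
Your proof is correct and follows essentially the same route as the paper: identify $\mu(V)$ with the chirotope given by the signs of maximal minors in a standard chart, so each preimage is cut out by finitely many polynomial sign conditions (semialgebraicity), and local sign-constancy of the nonvanishing minors gives exactly the chirotope condition needed for $\mu(W)\leadsto\mu(V)$ (the paper's ``continuity of the determinant function''). One small wording slip at the end: with the paper's convention $\M\leadsto\N$ means $\N$ is the weak map image (its chirotope agrees with that of $\M$ wherever it is nonzero), so the relevant characterization is ``$\chi_W(\sigma)=\chi_V(\sigma)$ whenever $\chi_V(\sigma)\neq 0$'' --- which is what your argument on the set $N$ actually establishes --- rather than ``whenever $\chi_W(\sigma)\neq 0$.''
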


Another way to state the second part of the proposition is to say that $\mu$ and $\mu_\aa$ are continuous with respect to the topology on $\MacP(n,S)$ resp.\  $\MacP_\aa(n+1,S)$ in which the open sets are the filters.
\begin{proof} For convenience we assume $S=\{1,\ldots,m\}$. Both parts follow from the chirotope characterization of oriented matroids: for each $V=\row(M)\in \Gr(n,\R^m)$, $\mu(V)$ is the oriented matroid with chirotope $\chi(i_1, \ldots, i_n)=\sign|M_{i_1,\ldots,i_n}|$.

1. Let $\M\in\MacP(n,[m])$, and without loss of generality assume that $\{1,\ldots,n\}$ is a basis for $\M$. Let $\chi$ be the chirotope for $\M$ with $\chi(1,\ldots, n)=+$. Then $\mu^{-1}(\M)$ is isomorphic to the set of all matrices $(I|A)$ satisfying the equalities and inequalities on determinants given by $\chi$.

2. Without loss of generality  assume that $V=\row(I|A)$ for some $A$. Let $S$ be the subset of $\Gr(n, \R^m)$ of matrices of the form $\row(I|\hat A)$. Thus we  have a coordinate chart identifying each element  $\row(I|\hat A)$ of $S$ with $\hat A$. Also, for each $W=\row(I|\hat A)\in S$ we can identify $\mu(W)$ with the chirotope $\chi$ for $\mu_W$ for which $\chi(i_1, \ldots, i_n)=\sign|(I|\hat A)_{i_1, \ldots, i_n}|$ for each $1\leq i_1<\ldots<i_n\leq m$. The result then follows from continuity of the determinant function.
\end{proof}

 It follows (Theorem~\ref{whitney:semialgebraic}) that the set of preimages of elements of $\MacP(n,S)$ resp.\  $\MacP_\aa(n+1, S)$  can be subdivided to a Whitney stratification. Further, for every such stratification and every pair $\sigma, \tau$ of strata with $\tau\subset\overline{\sigma}$ the oriented matroid associated to $\sigma$ weak maps to the oriented matroid associated to $\tau$.

\subsection{General combinatorial Grassmannians\label{sec:eric} }

\begin{defn} Let $\M$ be an oriented matroid, and let $r$ be a natural number. The {\em OM Grassmannian} $\Gr(r, \M)$ is the poset of all rank $r$ strong map images of $\M$, ordered by weak maps. 
\end{defn}

Consider an oriented matroid $\M$  realized by a vector arrangement $\A$ spanning a space $W$ equipped with an inner product. For each rank $r$ subspace $V$ of $W$, let $\pi_V$ denote orthogonal projection onto $V$. The arrangement $(\pi_V(\vv):\vv\in\A)$ is a vector realization of an element of $\Gr(r, \M)$.  Thus $\A$ determines a map
$\Gr(r,W)\to\Gr(r, \M)$.
In Section~\ref{sec:flags} we will consider this map when $W\subseteq\R^n$ and  $\A$ is the orthogonal projection of the coordinate vector arrangement onto $W$. In this situation the map $\Gr(r,W)\to\Gr(r, \M)$ depends only on $W$, so without ambiguity we can denote it by $\mu$. As we have already seen with the MacPhersonian, there is a Whitney stratification of $\Gr(r,W)$ subdividing the partition into preimages under $\mu$. Subdividing this stratification further to a triangulation gives a simplicial map $\tilde\mu:\Gr(r,W)\to\|\Gr(r, \M)\|$, well-defined up to homotopy

It follows from the Topological Representation Theorem that  $\|\Gr(1,\M)\|$ is homeomorphic to $\Gr( 1,\R^{\rank(\M)})$ for every $\M$.
 In~\cite{Bab} Babson showed that for every $\M$,  $\|\Gr(2, \M)\|$ has the homotopy type of $\Gr(2, \R^{\rank(\M)})$, and each map $\tilde\mu: \Gr(2,W)\to\|\Gr(2,\M)\|$ induced by a realization of $\M$  is a homotopy equivalence. Recently the first author showed that $\|\Gr(2,\M)\|$ is homeomorphic to $\Gr(2, \R^{\rank(\M)})$ (\cite{Abawrank2}) for all $\M$.
 
 In contrast, there are rank 4 oriented matroids $\M$, even with $\M$ realizable, such that $\|\Gr(3,\M)\|$ is disconnected (\cite{MRG}, \cite{Wu2021}).

\subsection{Tame classifying maps\label{sec:tameclass}}

\begin{defn} Let $X$ be a simplicial complex. A map $f:\|X\|\to \Gr(n,\R^{X^0})$ is {\em tame} if there is a regular cell decomposition of $\|X\|$ subdividing the simplicial decomposition such that the composition $\mu\circ f$ is constant on the interior of each cell.
\end{defn}


In this section we show that if  $\eta:\|X\|\to M$ is a triangulation and $A_\aa:M\to \Gr_\aa(n+1, \R^{X^0})$ is the resulting affine classifying map then there is a map $\hat A_\aa$ arbitrarily close to $A_\aa$ such that $\hat A_\aa\circ\eta$ is tame, and that such an $\hat A_\aa$, or a tame $A_\ll$, leads to an affine resp.\ linear CD manifold structure on $X$.

The map $A_\aa$ is continuous and smooth on $\eta(\|\Delta\|)$ for each $\Delta\in X$. Since $M$ is compact, $X$ is finite, and so there is a smooth map $\hat A_\aa:M\to \Gr(n, \R^{X^0})$  arbitrarily close to $A_\aa$. Recall from Section~\ref{sec:MacP} that the oriented matroid partition of $\Gr_\aa(n, \R^{X^0})$ can be subdivided to a Whitney stratification. Also, the smooth triangulation of $M$ gives a Whitney stratification of $M$ in which the strata are the images of simplices. By Theorem~\ref{whitney:opendense} we may assume that each restriction of $\hat A_\aa$ to the image of a simplex is transverse to this Whitney stratification. Thus by Theorems~\ref{whitney:pullback} and~\ref{whitney:triangulate} $\mu\circ\hat A_\aa\circ\eta$ is tame.

\begin{prop}\label{prop:tame}
 \begin{enumerate}
\item Let $\hat A_\aa$ be as above. Let $\hat X$ be a regular cell decomposition of $\|X\|$ subdividing the simplicial decomposition such that the composition $\mu\circ \hat A_\aa\circ\eta$ is constant on the interior of each cell. For each $\sigma\in\hat X$, let $\Phi(\sigma)$ be obtained from $\mu\circ\hat A_\aa\circ\eta (\sigma)$ by making each element of $X^0-\Star(\Delta_\sigma)^0$ a loop.

 Assuming $\hat A_\aa$ is taken sufficiently close to $A_\aa$,  $(X,\hat X,\Phi)$ is a CD manifold with an affine coordinate chart.
 \item Let $A_\ll:\Gr(n,\R^{X^0})$ be a linear classifying map arising from a smoothing $\eta$ of $X$ such that $A_\ll\circ \eta$ is tame. Let $\hat X$ be a regular cell decomposition of $\|X\|$ subdividing the simplicial decomposition such that the composition $\mu\circ A_\ll\circ\eta$ is constant on the interior of each cell. For each $\sigma\in\hat X$, let $\Phi(\sigma)$ be obtained from $\mu\circ\hat A_\aa\circ\eta (\sigma)$ by making each element of $X^0-\Star(\Delta_\sigma)^0$ a loop. Then $(X,\hat X,\Phi)$ is a CD manifold with a linear coordinate chart.
\end{enumerate}

\end{prop}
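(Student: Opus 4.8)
The plan is to verify the two conditions in the definition of a CD manifold: (a) for each cell $\sigma$ of $\hat X$ the oriented matroid $\Phi(\sigma)$ is an affine (resp.\ linear) OM chart at $\Delta_\sigma$; and (b) for each pair $\tau\subset\sigma$ of cells of $\hat X$, the restriction of $\Phi(\tau)$ to the non-loops of $\Phi(\sigma)$ is a weak-map image of $\Phi(\sigma)$. Everything is local, so fix $\sigma$, pick a point $q$ in the relative interior of $\sigma$ (so that $\Delta_q=\Delta_\sigma$ and $q$ is interior to $\|\Delta_\sigma\|$), and put $p=\eta(q)$, $W=A_\aa(p)=\row(\vv_{i,p},1:i\in X^0)$ and $W'=\hat A_\aa(p)$; in the linear case $A_\ll\circ\eta$ is assumed already tame, so there $W=W'=A_\ll(p)=\row(\vv_{i,p}:i\in X^0)$ and no perturbation intervenes. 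The first step is to identify the arrangement underlying $\Phi(\sigma)$: for $i\in\Star(\Delta_\sigma)^0$ one checks $b_i(p)=1$ and $\vv_{i,p}=f_q(i)$, where $f_q$ is the flattening induced by $\eta$ at $q$, and moreover $\{f_q(v):v\in\Star(\Delta_\sigma)^0\}$ positively spans $T_pM$ (because $f_q$ maps a neighbourhood of $q$ onto a neighbourhood of the origin); the columns indexed by $X^0-\Star(\Delta_\sigma)^0$ are turned into loops by the very definition of $\Phi(\sigma)$. Thus $\Phi(\sigma)$, restricted to its non-loop set $\Star(\Delta_\sigma)^0$, is the oriented matroid of a vector arrangement equal (linear case) or, for $\hat A_\aa$ close enough to $A_\aa$, arbitrarily close (affine case) to $(f_q(i):i\in\Star(\Delta_\sigma)^0)$ resp.\ $((f_q(i),1):i\in\Star(\Delta_\sigma)^0)$.

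The second step is to read the chart axioms off the geometry of $f_q$, checking along the way that each is robust under a small perturbation of $W$. That the non-loops are exactly $\Star(\Delta_\sigma)^0$ is immediate from the loopification, each vector being nonzero (and, having norm $\geq 1$ in the affine case, remaining so after perturbation). That every simplex $\sigma'$ of $\Star(\Delta_\sigma)$ is independent follows from $f_q$ being a simplex-wise affine embedding, since then the images of the vertices of $\sigma'$ are affinely independent, so the vectors $(f_q(i),1)$ are linearly independent, and linear independence is an open condition. That the all-$+$ sign vector is a covector holds because $(1,\dots,1)\in W$, so $W'$ contains a vector all of whose coordinates are positive. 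The separation axiom --- for simplices $\sigma',\tau'$ of $\Star(\Delta_\sigma)$, a covector $Y$ with $Y(\sigma')\subseteq\{0,+\}$ and $Y(\tau')\subseteq\{0,-\}$ --- comes from the fact, true for any two simplices of a geometric simplicial complex, that $f_q(\|\sigma'\|)$ and $f_q(\|\tau'\|)$ meet in the common face $f_q(\|\sigma'\cap\tau'\|)$: this produces an affine functional vanishing on that common face, strictly positive on the remaining vertices of $\sigma'$ and strictly negative on those of $\tau'$, i.e.\ a vector $w\in W$ vanishing on $\rho=\sigma'\cap\tau'$ and strictly signed elsewhere; since $\rho$ is independent, the subspace of $W$ of vectors vanishing on $\rho$ has the expected dimension and lies close to its analogue in $W'$, so there is $w'\in W'$ still vanishing exactly on $\rho$ and, by strictness, positive on $\sigma'\setminus\rho$ and negative on $\tau'\setminus\rho$, and $Y=\sign(w')$ works. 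In the linear case the two extra axioms are checked similarly: $\Delta_\sigma$ is dependent because $q=\sum_j a_j i_j$ with all $a_j>0$ gives $\sum_j a_j(i_j-q)=0$, hence $\sum_j a_j f_q(i_j)=0$; and no non-loop other than a vertex of a boundary simplex $\sigma'$ of $\Star(\Delta_\sigma)$ lies in $\mathrm{conv}(f_q(\sigma'))$, since $f_q$ injective and affine on $\|\sigma'\|$ makes $f_q(\|\sigma'\|)=\mathrm{conv}(f_q(\sigma'))$ contain the image of no other vertex, while $0=f_q(q)\notin\mathrm{aff}(f_q(\sigma'))$ upgrades affine to linear independence of $\sigma'$.

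The third step is the weak-map compatibility. For $\tau\subset\sigma$ one has $\Delta_\tau\subseteq\Delta_\sigma$, hence $\Star(\Delta_\sigma)^0\subseteq\Star(\Delta_\tau)^0$, so the non-loop set only grows, as the definition of CD manifold wants. Choosing $q_0$ in the relative interior of $\tau$ and letting $q\to q_0$ through the relative interior of $\sigma$, upper semicontinuity of $\mu$ (Proposition~\ref{prop:muproperties}) gives $\mu\circ\hat A_\aa\circ\eta(\sigma)\leadsto\mu\circ\hat A_\aa\circ\eta(\tau)$ (and likewise with $A_\ll$). Restriction to $\Star(\Delta_\sigma)^0$ sends weak maps to weak maps, and $\Phi(\sigma)$ and $\Phi(\tau)$ restricted to $\Star(\Delta_\sigma)^0$ are exactly $\mu\circ\hat A_\aa\circ\eta(\sigma)$ and $\mu\circ\hat A_\aa\circ\eta(\tau)$ restricted to $\Star(\Delta_\sigma)^0$, which gives the required weak map. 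Since $X$ is finite and $M$ compact, the finitely many ``close enough'' requirements produced above (one per cell) can be met simultaneously --- this is precisely the hypothesis ``$\hat A_\aa$ sufficiently close to $A_\aa$'' in part~(1), and it does not arise in part~(2).

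The step I expect to be the main obstacle is the separation axiom together with its stability under the perturbation: one must take the separating affine functional strictly signed on the vertices not shared by $\sigma'$ and $\tau'$ and then ``solve exactly'' for a vector of $W'$ that still vanishes on the common face, using the independence of that face; the underlying fact that two simplices of a geometric simplicial complex are weakly separated by an affine half-space through their common face is elementary but must be isolated and applied carefully. The remainder is bookkeeping, the most delicate part being to keep straight at each stage which elements are loops and to observe that ``restriction to the non-loops of $\Phi(\sigma)$'' is literally deletion of a fixed set of elements, so that it commutes with weak maps.
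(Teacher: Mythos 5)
Your route is genuinely different from the paper's. The paper's proof is two sentences: it takes for granted that the unperturbed (flattening-derived, loopified) oriented matroids are OM charts, invokes the claim that a weak map with the same set of loops of an affine chart is again an affine chart to absorb the perturbation, and gets the inter-cell weak-map condition from upper semicontinuity of $\mu$. You instead verify the chart axioms directly from the geometry of $f_q$ and check, axiom by axiom, that each survives a small perturbation of the subspace $\hat A_\aa(p)$, closing with the same upper-semicontinuity argument for compatibility. This buys something real: your exact-vanishing argument for the separation axiom (solving inside $W'$ for a vector that still vanishes on the independent common face $\rho=\sigma'\cap\tau'$, then keeping the strict signs) supplies precisely the detail that the paper's ``easy to see'' lemma glosses over, since a weak map $\hat\M\leadsto\M$ only guarantees a covector $\hat Y\geq Y$ and need not preserve the zeros of $Y$ on $\rho$; your use of realizability of $\mu(W')$ sidesteps that issue. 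The underlying separation fact you isolate (an affine functional vanishing on $\mathrm{aff}(f_q(\rho))$, strictly positive on the remaining vertices of $\sigma'$ and strictly negative on those of $\tau'$) is correct, provable by a Stiemke/Gordan argument using only nondegeneracy of the simplices and $\|\sigma'\|\cap\|\tau'\|=\|\rho\|$, so that part of your plan goes through. (Two small points: in the linear case a vertex of $\Delta_\sigma$ can itself be a loop, e.g.\ when $\Delta_\sigma$ is a vertex and $q$ is that vertex, so ``the non-loops are exactly $\Star(\Delta_\sigma)^0$'' is not quite right there, though the linear chart axioms never require it; and the finitely-many-closeness bookkeeping is fine as you say.)

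There is, however, one step whose stated justification does not prove what is needed: the linear-case condition that no other nonloop element lies in the convex hull of a boundary simplex $\sigma'$. The convex hull here is the oriented-matroid one, which for the vector realization $(f_q(v))$ is the \emph{conical} hull $\{\sum_{v\in\sigma'}c_vf_q(v):c_v\geq 0\}$, not the convex hull of the image points; your argument that $f_q(\|\sigma'\|)=\mathrm{conv}(f_q(\sigma'))$ contains no other vertex image only rules out the case where the coefficients sum to $1$, and the paper's own torus example (where $b$ lies in the OM-convex hull of $\{c,g\}$ without $f(b)$ lying in the segment $[f(c),f(g)]$) shows the two notions genuinely differ. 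The claim itself is true and can be fixed as follows: if $f_q(w)=\sum_{v\in\sigma'}c_vf_q(v)$ with $c_v\geq0$ and $s=\sum_v c_v>0$, set $z=\frac1s f_q(w)\in f_q(\|\sigma'\|)$; if $s=1$ this contradicts injectivity of $f_q$, while if $s\neq1$ one of $f_q(w)$, $z$ lies on the open segment joining $0$ to the other, and both such open segments lie in the image of the open star of $\Delta_\sigma$ (being images of open segments from $q$ to $w$, resp.\ to a point of $\|\sigma'\|$, whose interior points have positive barycentric coordinates on $\Delta_\sigma$), which is disjoint from $f_q(\|\sigma'\|)$ and does not contain $f_q(w)$ --- a contradiction either way. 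With that repair, and keeping the conical reading of ``convex hull'' wherever it appears, your proof is sound.
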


\begin{remark} A linear coordinate chart at a simplex $\Delta$ must satisfy the non-generic condition that $\Delta$ is dependent. Thus a small perturbation $\hat A_\ll$ of $A_\ll$ need not give linear coordinate charts.
\end{remark}

\begin{proof} 1. Let $\sigma\in\hat X$. By Proposition~\ref{prop:muproperties}.2, if $\hat A_\aa$ is taken sufficiently close to $A_\aa$ then for each $p\in\sigma$ we have $\Phi(\sigma)=\mu\circ\hat A_\aa(p)\leadsto\mu\circ A_\aa(p)$. It is easy to see that if $\M$ is an affine coordinate chart at $\Delta$, $\hat\M\leadsto\M$, and $\hat\M$ has the same set of loops as $\M$  then $\hat\M$ is an affine coordinate chart at $\Delta$. 

Also, from upper semi-continuity (Proposition~\ref{prop:muproperties}) we see that the weak map part of the definition of CD manifold is satisfied.

The proof of (2) is similar.
\end{proof}

\begin{defn} Any CD manifold arising as in Proposition~\ref{prop:tame} is an affine resp.\ linear {\em CD manifold structure} associated to $\eta$.
\end{defn}

\subsection{The associated complex and the complexes $Y$ and $Z$\label{sec:associated}}

We begin this section with  a corrected version of the definition of the associated complex in~\cite{GM}. See Appendix~\ref{app:oops} for an explanation of the issues with the original version.

\begin{defn}\label{defn:assocposet} Let $X$ be a simplicial manifold. The {\em (affine) associated poset} $Z$ of $X$ is  constructed
	as follows: The elements of Z are 4-tuples $(\Delta, t, y, z)$ satisfying all of the following.
	\begin{itemize}
		\item $\Delta\subset {X^0}$ is a simplex of $X$.
		\item $t$ is an affine coordinate chart at $\Delta$.
		\item $y$ is a rank 2 strong map image of $t$.
		\item $z$ is a nonzero covector of $y$.
	\end{itemize}
	We say  $(\Delta, t, y, z)\leq  (\Delta', t', y', z')$ if the following hold.
	\begin{itemize}
	\item $\Delta\subseteq\Delta'$,
	\item  If $t_{\Delta'}$, $y_{\Delta'}$, and $z_{\Delta'}$ denote the restrictions of $t$, $y$, and $z$ to $\Star(\Delta')^0$, then $t'\leadsto t_{\Delta'}$, $y'\leadsto y_{\Delta'}$, and $z'\geq z_{\Delta'}$.
	\end{itemize}
	
The {\em (affine) associated complex} of $X$ is the order complex $\Cx(Z)$.
	
	If we delete the sign vectors $z$ 
	we get an additional associated poset, which we denote by $Y$, 
	equipped with poset maps $Z\stackrel{\rho}{\to}  Y\stackrel{\pi}{\to}  X$.
\end{defn}

The simplicial complex $\Cx Z$ (modulo correction and rewording) was called the {\em associated complex} in~\cite{GM}. If we replace affine coordinate charts with linear coordinate charts in this definition we get the {\em linear associated poset} and {\em linear associated complex}.

 Let $\mathbb T X$ denote the poset obtained by deleting the oriented matroids $y$ and $z$ in Definition~\ref{defn:assocposet}, and let $\tau:\Cx\mathbb T X\to \Cx X$ be the resulting simplicial map. Then a CD manifold structure on $X$ is just a regular cell decomposition $\hat{X}$ of $\|X\|$ and a poset map $\mu:\hat{X}\to \mathbb T X$ so that the simplicial map $\|\mu\|:\|X\|\to \|\mathbb T X\|$ is a section of $\|\tau\|$. We think of $\mathbb T X$ as a ``fat" CD atlas for $X$ in which all possible CD atlases live, and we think of $Y$ and $Z$ as ``fat" analogs to the spaces $\Y$ and $\Z$ associated to the smooth manifold $M$.
 
 By this analogy, we should hope for the map $\|\rho\|:\|Z\|\to \|Y\|$ to be something similar to a circle bundle. 
 
 \begin{prop}\label{prop:circleqf}  The map 
 $\|\rho\|:\|Z\|\to \|Y\|$ is a  quasifibration with fiber $S^1$.
 \end{prop}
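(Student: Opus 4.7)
The strategy is to apply the Dold--Thom patching criterion for quasifibrations to the open cover of $\|Y\|$ by open stars of vertices. First, for $\eta = (\Delta, t, y) \in Y$ the subposet $\rho^{-1}(\eta) \subseteq Z$ is isomorphic to the poset $L_y^{\ast}$ of nonzero covectors of the rank-$2$ oriented matroid $y$ under the natural sign-vector order (in the defining inequality $z' \geq z_{\Delta}$ the restriction $z_{\Delta}$ equals $z$, because $\Delta$ is fixed within the fiber). By the Topological Representation Theorem specialized to rank $2$, $L_y^{\ast}$ is the face poset of a cyclic arrangement of signed antipodal point-pairs on $S^1$, so $\|\rho^{-1}(\eta)\| \cong S^1$. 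Moreover $\rho^{-1}(\eta)$ is a full subcomplex of the order complex of $Z$, since any chain in $Z$ whose $\rho$-image is the constant chain $\eta$ lies entirely in the fiber.

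For each vertex $\eta \in Y$, let $U_\eta \subseteq \|Y\|$ denote the open star of $\eta$ (the union of open simplices of $\|Y\|$ containing $\eta$). Each $U_\eta$ is contractible, and intersections satisfy $U_{\eta_1} \cap \cdots \cap U_{\eta_k} = U_{\eta^{\ast}}$ whenever $\{\eta_1, \ldots, \eta_k\}$ is a chain with maximum $\eta^{\ast}$ (and are empty otherwise). Since $\rho^{-1}(\eta)$ is full, $\|\rho\|^{-1}(U_\eta)$ is the open star of $\|\rho^{-1}(\eta)\|$ in $\|Z\|$, and deformation retracts onto $\|\rho^{-1}(\eta)\| \cong S^1$ via the standard simplicial collapse (sending to zero the barycentric coordinates of vertices outside $\rho^{-1}(\eta)$ and rescaling). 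These deformation retractions restrict consistently over intersections, furnishing the compatibility needed for patching.

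To invoke Dold--Thom I still need to verify that every fiber of $\|\rho\|^{-1}(U_\eta) \to U_\eta$ has the homotopy type of $S^1$ and includes in $\|\rho\|^{-1}(U_\eta)$ as a homotopy equivalence. For a point $b$ interior to a simplex $[\eta_0 < \cdots < \eta_n]$ containing $\eta$, $\|\rho\|^{-1}(b)$ is a twisted combination of the covector circles $\|L_{y_i}^{\ast}\|$, assembled along transition maps $L_{y_i}^{\ast} \to L_{y_{i+1}}^{\ast}$ built from $y_{i+1} \leadsto (y_i)_{\Delta_{i+1}}$ and the restriction of $y_i$ to the smaller star. Interpreted via the Topological Representation Theorem in rank $2$, each transition is a simplicial map between two CW decompositions of a common $S^1$, hence a homotopy equivalence; the total fiber therefore collapses onto any one of these circles. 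The principal technical obstacle is precisely this last step: verifying that the twisted assembly of covector circles over a simplex $[\eta_0 < \cdots < \eta_n]$ retracts onto a single $S^1$, which amounts to checking the degree-$\pm 1$ and homotopy-equivalence properties of the refinement maps between the relevant rank-$2$ covector posets.
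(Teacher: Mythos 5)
There is a genuine gap, and you have in fact flagged it yourself: the entire content of the proposition is the step you defer at the end. In your Dold--Thom patching scheme, making an open star $U_\eta$ distinguished requires knowing that the geometric fiber $\|\rho\|^{-1}(b)$ over an arbitrary point $b$ of $U_\eta$ is weakly equivalent to $S^1$ and that the deformation of $\|\rho\|^{-1}(U_\eta)$ onto $\|\rho^{-1}(\eta)\|\cong S^1$ restricts to a weak equivalence on each such fiber. The fiber of a simplicial map over a point interior to a simplex $[\eta_0<\cdots<\eta_n]$ is not the order complex of any single covector poset but a polyhedron assembled from all the $L^*_{y_i}$, and your assertion that each ``transition'' $L^*_{y_i}\to L^*_{y_{i+1}}$ is a homotopy equivalence because it is ``a simplicial map between two CW decompositions of a common $S^1$'' is not a proof: a simplicial map between two triangulations of $S^1$ can have any degree, including $0$. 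Whether these particular maps behave well is exactly where the oriented-matroid structure (weak maps of the charts $t$ and of the rank-$2$ images $y$, and the compatibility of the covectors $z$) must be used, and your sketch never uses it.

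The paper sidesteps geometric fibers entirely by applying Babson's poset formulation of Quillen's Theorem B to the poset map $\rho:Z\to Y$: one only has to check that for $(\Delta',t',y')\geq(\Delta,t,y)$ the upper set $\{z': z'\geq z_{\Delta'}\}$ of covectors of $y'$ has contractible order complex, and dually for $\leq$. Via the Topological Representation Theorem in rank $2$ these sets index an open arc, respectively a closed arc, in $S^1$; contractibility then reduces to nonemptiness, which for the open arc follows from the definition of weak maps and for the closed arc requires Theorem 0.1 of~\cite{weakmapsrep}. These two nonemptiness statements are the genuinely nontrivial combinatorial inputs, and they (or some equivalent analysis) would also be what you need to complete your degree-$\pm1$/homotopy-equivalence verification; without them your argument establishes only the fiber over vertices and the retraction of star preimages, not the quasifibration property.
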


There are two key facts in proving this. The first
 is the Topological Representation Theorem (Theorem~\ref{thm:TRT}), which tells us that the covectors of a rank $r$ oriented matroid $\M$ index the cells in a 
 cell decomposition of $S^{r-1}$. In particular, for each $(\Delta,t,y)\in Y$, $\rho^{-1}(\Delta, t,y)$ is isomorphic to the poset of cells in a regular cell decomposition of $S^1$.
 The second fact is Quillen's Theorem B  (\cite{Quillen}), or more particularly a formulation of this theorem given by Babson in~\cite{Bab}.
 
 \begin{thm}\cite{Bab} Let $f:P\to Q$ be a poset map satisfying both of the following.
 	\begin{enumerate}
 		\item If $q\geq f(p)$ then $\| f^{-1}(q)_{\geq p}\|$ is contractible.
 		\item If $q\leq f(p)$ then $\| f^{-1}(q)_{\leq p}\|$ is contractible. 	\end{enumerate}
Then $\|f\|:\|P\|\to\|Q\|$ is a quasifibration.
\end{thm}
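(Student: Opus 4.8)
The plan is to deduce the theorem from the standard gluing criteria for quasifibrations (Dold and Thom), using hypotheses (1) and (2) only through Quillen's Theorem A \cite{Quillen}. The point is that (1) and (2) are exactly what is needed to compare the ``fiber'' $\|f^{-1}(q)\|$ with the various ``local total spaces'' obtained by restricting $\|f\|$ over simplices of the order complex $\Cx Q$, and once those comparisons are in place the result is a routine, if laborious, filtration argument.

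The first step is an order-theoretic lemma. Fix a chain $q_0<q_1<\cdots<q_k$ in $Q$ and let $R\subseteq P$ be the subposet $f^{-1}(\{q_0,\dots,q_k\})$. Then $\|f\|^{-1}(\|q_0<\cdots<q_k\|)=\|R\|$, since a point of $\|\Cx P\|$ lands in the closed simplex $\|q_0<\cdots<q_k\|$ of $\|\Cx Q\|$ precisely when the $f$-values occurring with positive coefficient in its supporting chain all lie in $\{q_0,\dots,q_k\}$. I claim the inclusion $\|f^{-1}(q_j)\|\hookrightarrow\|R\|$ is a homotopy equivalence for every $j$, and that $\|f\|$ restricted over $\|q_0<\cdots<q_k\|$ is a quasifibration. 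For $k=1$ this is a direct application of Quillen's Theorem A. For $\|f^{-1}(q_0)\|\hookrightarrow\|R\|$ one uses the lower-interval form: for $p''\in R$ the poset $\{p\in f^{-1}(q_0):p\le p''\}$ is contractible because it either has a top element (if $f(p'')=q_0$) or equals $f^{-1}(q_0)_{\le p''}$ with $q_0\le q_1=f(p'')$, which is contractible by hypothesis (2). Symmetrically, $\|f^{-1}(q_1)\|\hookrightarrow\|R\|$ follows from the upper-interval form of Theorem A together with hypothesis (1). Since $\|R\|$ maps onto the contractible edge $\|q_0<q_1\|$ with both endpoint-fibers homotopy equivalent to the total space, this restriction is a quasifibration. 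For general $k$ one bootstraps: cover the closed $k$-simplex by a small neighborhood of the vertex $q_0$, whose $\|f\|$-preimage deformation retracts onto $\|f^{-1}(q_0)\|$ by pushing the mass of each chain onto its $q_0$-block, and a neighborhood of the opposite facet $\|q_1<\cdots<q_k\|$, handled by induction on $k$, with overlap homotopy equivalent to that facet crossed with an interval; the Mayer--Vietoris form of the Dold--Thom gluing lemma then promotes this to the quasifibration statement over $\|q_0<\cdots<q_k\|$, with fiber $\|f^{-1}(q_0)\|$.

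The second step is the global assembly. Filter $\|Q\|=\|\Cx Q\|$ by the skeleta $B_n$ of $\Cx Q$. Over an open $n$-simplex $\tau^\circ$ the combinatorial description of $\|f\|^{-1}(\tau^\circ)$ (chains of $P$ mapping onto the chain underlying $\tau$ and hitting every vertex) is locally constant in $\tau^\circ$, so $\|f\|$ is a fiber bundle, hence a quasifibration, over $B_n\setminus B_{n-1}$. For the inductive step one chooses a neighborhood $U_n$ of $B_{n-1}$ in $B_n$ that deformation retracts onto $B_{n-1}$ by radially pushing the interior of each $n$-simplex to its boundary, and lifts this to a deformation retraction of $\|f\|^{-1}(U_n)$ onto $\|f\|^{-1}(B_{n-1})$ by carrying the mass on the block of a chain sitting over a vanishing barycentric coordinate onto an adjacent block. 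The decisive requirement is that the time-one map of this lift restrict, on each point-preimage $\|f\|^{-1}(x)$, to a \emph{weak homotopy equivalence} onto $\|f\|^{-1}(r(x))$; this is exactly the first step applied to the simplex supporting $x$, because deleting one vertex $q_i$ from the support amounts to collapsing the $f^{-1}(q_i)$-direction, which is a weak equivalence by the $k=1$ case -- that is, by (1) and (2). With (i) the open-stratum quasifibration property and (ii) this fiberwise-equivalence deformation retraction in hand, the Dold--Thom filtration criterion yields that $\|f\|:\|P\|\to\|Q\|$ is a quasifibration, with fiber over $q$ the complex $\|f^{-1}(q)\|$.

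The main obstacle is purely point-set. One must produce the lifted deformation retraction of the second step consistently over all $n$-simplices at once -- the naive ``push mass off the smallest block'' prescription is discontinuous where two barycentric coordinates coincide -- and one must verify that its time-one map is genuinely a weak equivalence on every point-preimage. Both are handled by working with collars and partitions of unity and by reducing, via the Dold--Thom gluing lemma inside a single simplex, to the covering-pair situation $q\lessdot q'$; that base case, and nothing else, is where hypotheses (1) and (2) are actually consumed.
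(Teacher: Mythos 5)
The paper itself does not prove this theorem; it is quoted from Babson's thesis, so there is no in-paper argument to compare with, and your proposal has to stand on its own. Its overall architecture (skeletal filtration of $\|\Cx Q\|$, triviality of $\|f\|$ over open simplices, Dold--Thom gluing and deformation criteria, with (1) and (2) entering through fiber comparisons) is the right family of ideas, and several pieces are fine: $\|f\|^{-1}$ of a closed simplex on a chain $q_0<\cdots<q_k$ is indeed $\|f^{-1}(\{q_0,\dots,q_k\})\|$, and the Quillen Theorem A arguments show the end-vertex fiber inclusions $\|f^{-1}(q_0)\|,\|f^{-1}(q_k)\|\hookrightarrow\|R\|$ are equivalences (middle vertices need a two-out-of-three detour, since the lower fibers of $f^{-1}(q_j)\hookrightarrow R$ over elements mapping below $q_j$ can be empty). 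Also, the point-set worry you flag at the end is a red herring: the natural lift of a base deformation keeps the normalized blocks of a chain fixed and merely rescales the block masses to match the moving base point, and this is continuous.

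The genuine gap is at what you yourself call the decisive requirement: that the time-one map of the lifted retraction is a weak equivalence on every point-preimage, i.e.\ that the block-deletion map $F_\tau\to F_\sigma$ (where $F_\tau$ is the fiber over an interior point of the simplex $\tau$, a space of support-compatible tuples) is a weak equivalence when a vertex $q_i$ is deleted from the support. You assert this ``is exactly the first step \dots by the $k=1$ case,'' but the first step only compares \emph{vertex} fibers with $\|R\|$; it says nothing about interior fibers, and the natural fiberwise argument fails for \emph{middle} vertices: the point-fiber of the deletion map over a tuple with $p^-$ below and $p^+$ above is the two-sided interval $f^{-1}(q_i)\cap\{p: p^-\le p\le p^+\}$, which hypotheses (1) and (2) do not control. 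Concretely, take $Q=\{a<b<c\}$, $f^{-1}(a)=\{a'<p^-\}$, $f^{-1}(b)=\{u,v,w\}$ with $u<w$, $v<w$, $f^{-1}(c)=\{p^+<c'\}$, and order relations generated by $a'<v$, $p^-<u$, $v<p^+$, $u<c'$, $w<c'$, $p^-<p^+$: all of (1) and (2) hold, yet $f^{-1}(b)\cap[p^-,p^+]=\emptyset$, so the deletion map $F_{\{a,b,c\}}\to F_{\{a,c\}}$ is not even surjective and its weak-equivalence property (true here only because both sides happen to be contractible) cannot be extracted from contractibility of one-sided intervals. So the step where (1) and (2) are ``consumed'' is precisely the step that is not proved; a correct argument must either show directly that all interior-fiber inclusions $F_\tau\hookrightarrow\|f\|^{-1}(\overline{\tau})$ are weak equivalences (and then get the deletion maps by two-out-of-three, using that the relevant triangles commute up to straight-line homotopy), or reorganize the induction (e.g.\ by height in $Q$, retracting off open stars of maximal elements) so that only top- or bottom-block deletions ever occur; as written, the proposal defers exactly this content.
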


 \begin{proof}  We need to check two criteria for each $(\Delta,t,y,z)\in Z$:
 	\begin{enumerate}
 		\item If $(\Delta', t', y')\geq (\Delta, t, y)$ then $\|\{(\Delta', t', y', z'): (\Delta', t', y', z')\geq (\Delta,t,y,z)\}\|$ is contractible.
%
 		\item If $(\Delta', t', y')\leq (\Delta, t, y)$ then $\|\{(\Delta', t', y', z'): (\Delta', t', y', z')\leq (\Delta,t,y,z)\}\|$ is contractible.
 	\end{enumerate}
 	
 	In either criterion, the question is whether a certain set of covectors $z'$ of $y'$ is contractible. 

 	For the first criterion, the poset $\{z':z'\geq z_{\Delta'}\}$ indexes the cells in an open arc in $S^1$. This arc is nonempty by the definition of weak maps, hence the order complex of $\{z':z'\geq z_{\Delta'}\}$ is contractible. For the second criterion,  the poset $\{z':z\geq z'_\Delta\}$ indexes the cells in a closed arc in $S^1$. This arc is nonempty by Theorem 0.1 in~\cite{weakmapsrep}, hence the order complex of  $\{z':z\geq z'_\Delta\}$  is contractible.
	
 \end{proof}
 
 As shown in Appendix~\ref{app:ss}, from Proposition~\ref{prop:circleqf} we can conclude that $\|\rho\|:\|Z\|\to\|Y\|$ has a well-defined first twisted Chern class $c_1(\|Z\|,\OO)$, which can be described in a similar way as the twisted Chern class $c_1(\Z,\OO)$   as the image under a differential in the Leray-Serre spectral sequence associated to $\|\rho\|$. Further,  Appendix~\ref{app:ss} gives an explicit, simplicial, and local  construction of a cocycle $\Omega$ such that $c_1(\|Z\|,\OO)=[\Omega]$. 
   
 \section{Diagrams of tame maps\label{sec:flags}}
 
 Now we can begin to tie the alternate Chern-Weil formula of Section~\ref{sec:chernweil} to CD manifolds. Our discussion will connect CD manifolds with affine atlases to our formula for $\tilde P(TM\oplus\mathbf 1)$ (which of course coincides with $\tilde P(TM)$). A very similar discussion connects CD manifolds with linear charts to the formula for $\tilde P(TM)$.
 
 Let $\pi:\Y\to M$ be the Grassmannian 2-plane bundle associated to $TM\oplus\mathbf 1$ and $\rho:\Z\to\Y$ the canonical circle bundle over $\Y$. Let $\hat A_\aa:M\to \Gr_\aa(n+1,\R^{X^0})$ be the map of Section~\ref{sec:tameclass} and $(X,\hat X,\Phi)$ an associated CD manifold structure. The goal of this section is to derive from $\hat A_\aa$ a diagram of maps
 
\begin{tikzcd}
\Z\arrow[r]\arrow[d,"\rho"]&\|Z\| \arrow[d,"\|\rho\|"] \\
\Y\arrow[d, "\pi"]\arrow[r]&\|Y\| \arrow[d,"\|\pi\|"]\\
M\arrow[r,"\eta^{-1}"]&\|X\|
\end{tikzcd}

\noindent so that the top square is a map of quasifibrations and the bottom square commutes up to homotopy. 

 \begin{defn} Define the {\em $(r,2)$-flag space} \[\Flag(r, 2;S)=\{(V,W):V\in \Gr(r,\R^S), W\in \Gr(2,V)\}.\] We view $\Flag(r, 2;S)$ as a subspace of $\Gr(r,\R^S)\times  \Gr(2,\R^S)$.
 
Define the {\em $(r,2,1)$-flag space} \[\Flag(r, 2, 1,S)=\{(V,W,\x):V\in \Gr(r,\R^S), W\in \Gr(2,V), \x\in(W-\{0\})/\R_{>0}\}.\] We view $\Flag(r, 2,1;S)$ as a  subspace of  $\Gr(r,\R^S)\times  \Gr(2,\R^S)\times ((\R^S-\{0\})/\R_{>0})$.
 \end{defn}

\begin{defn}  Define the {\em $(r,2)$-flag poset}\[\OMFlag(r,2;S)=\{(\M,\N):\M\in \MacP(r,S), \N\in \Gr(2,\M)\}.\] We view $ \OMFlag(r,2;S)$ as a subposet of $\MacP(r,S)\times\MacP(2,S)$.

Define the {\em $(r,2,1)$-flag poset}\[\OMFlag(r,2,1;S)=\{(\M,\N,\s):\M\in \MacP(r,S), \N\in \Gr(2,\M), \s\in\V^*(\N)-\{0\}\}.\]  We view $ \OMFlag(r,2,1;S)$ as a subposet of $\MacP(r,S)\times\MacP(2,S)\times\{0,+,-\}^S$.
\end{defn}

From the map $\mu:\Gr(r,\R^S)\to\MacP(r,S)$ we get a map $\Flag(r,2,1;S)\to \OMFlag(r,2,1;S)$, which we  also denote by $\mu$, sending $(V,W,\x)$ to $(\mu(V),\mu(W),\sign(\x))$.  Forgetting the last coordinate gives a map $\mu:\Flag(r,2;S)\to \OMFlag(r,2;S)$. These maps are semi-algebraic and upper semi-continuous. Thus the partitions of $\Flag(r, 2;S)$ and $\Flag(r, 2,1;S)$ into preimages can be subdivided to  Whitney stratifications, and any subdivisions of these stratifications into regular cell complexes $\hat F(r,2;S)$ and   $\hat F(r,2,1;S)$ induce poset maps $\hat\mu:\hat F(r,2;S)\to\OMFlag(r,2;S)$ and $\hat F(r,2,1;S)\to  \OMFlag(r,2,1;S)$.

Recall from Proposition~\ref{prop:pullbackaffine} that we can identify $TM\oplus \mathbf 1$ with the pullback of the canonical bundle over $\Gr_\aa(n+1,\R^{X^0})$ by $\hat A_\aa$. Thus we can identify $\Y$ with $\Y'=\{(m,W):m\in M, W\in \Gr(2,\hat A_\aa(m))\}$ and $\Z$ with $\Z'=\{(m,W, \x):m\in M, W\in \Gr(2,\hat A_\aa(m)), \x\in (W-\{0\})/\R_{>0}\}$. From the first of  these identifications we get  a map
\begin{align*}
A_\aa^Y:\Y&\to\|X\|\times\Flag(n+1,2;{X^0})\\
A_\aa^Y(m,W)&=(\eta^{-1}(m), \hat A_\aa(m), W)
\end{align*}

The cell decomposition $\hat X$ of $\|X\|$ is a Whitney stratification, and the product of two Whitney stratifications is a stratification (Theorem~\ref{whitney:product}), so there is a smooth map $\hat A_\aa^Y:\Y\to\|X\|\times\Flag(n+1,2,{X^0})$ arbitrarily close to $A_\aa^Y$ so that $\Y$ gets a Whitney stratification as a pullback. Let $\hat\Y$ be a regular cell decomposition subdividing this stratification. Thus to each cell  $\sigma$ in $\hat\Y$ we have an associated $f^Y(\sigma)=(\Delta, \tau, y)\in Y$.

\begin{prop}
\begin{enumerate}
\item $f^Y:\hat \Y\to Y$ is a poset map.
\item The diagram
\begin{tikzcd}
\Y\arrow[d, "\pi"]\arrow[r,"\| f^Y\|"]&\|Y\|\arrow[d,"\pi"]\\
M\arrow[r,"\eta^{-1}"]&\|X\|
\end{tikzcd}
commutes up to homotopy.
\end{enumerate}
\end{prop}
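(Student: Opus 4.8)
The plan is to verify the two assertions in turn, both of which reduce to unwinding the definitions of the posets $Y$ and $\mathbb T X$ together with the fact that $\hat A_\aa^Y$ is transverse to the relevant Whitney stratification. For part (1), recall that a cell $\sigma'\subset\sigma$ of $\hat\Y$ maps, under the pullback stratification, to a stratum in the closure of the stratum hit by $\sigma$; by the last sentence of Section~\ref{sec:MacP} (applied to the flag map $\mu:\Flag(n+1,2;X^0)\to\OMFlag(n+1,2;X^0)$, which is semialgebraic and upper semicontinuous) the oriented matroid flag associated to $\sigma$ weak-maps to the one associated to $\sigma'$. One must then check that the simplex-data and loop-data are compatible, i.e.\ that if $\sigma'\subset\sigma$ then $\Delta_{\sigma'}\subseteq\Delta_\sigma$ and the charts $\Phi$ restrict correctly. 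This is exactly the condition built into the ordering of $Y$ in Definition~\ref{defn:assocposet}: the first coordinate of $f^Y(\sigma)$ is $\Delta_\sigma$ (the smallest simplex of $X$ whose realization contains $\eta^{-1}(\pi(\sigma))$), the second is the affine chart $\Phi(\hat X(\sigma))$ obtained from $\mu\circ\hat A_\aa\circ\eta$ on the underlying cell of $\hat X$, and the third is the $2$-plane datum $W$ tracked by the flag. Since passing to a subcell only moves points toward more special position, $\Delta_{\sigma'}\subseteq\Delta_\sigma$, the chart weak-maps (Proposition~\ref{prop:tame}, upper semicontinuity), and the rank-$2$ strong-map image and its covector restrict as required, so $f^Y(\sigma')\le f^Y(\sigma)$.

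For part (2), I would argue that the square commutes \emph{on the nose} after replacing $\eta^{-1}$ by a homotopic map, or more precisely that both composites $\|X\|\to\|X\|$ (namely $\pi\circ\|f^Y\|\circ(\text{section})$ versus the given map) agree up to the homotopy already present in the construction. The key point is that $\pi:\|Y\|\to\|X\|$ is induced by the poset map $(\Delta,t,y)\mapsto\Delta$, and by construction the first coordinate of $f^Y(\sigma)$ for $\sigma\in\hat\Y$ is $\Delta_{\pi(\sigma)}$ in $X$; hence $\|\pi\|\circ\|f^Y\|$ sends a point of $\Y$ lying over $m\in M$ into $\|\Star\Delta_{\eta^{-1}(m)}\|$, which is a contractible neighborhood of $\eta^{-1}(m)$ in $\|X\|$. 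A straight-line homotopy inside each such star (or an appeal to the fact that $\|\Cx\|$ of a poset map is determined up to homotopy by the poset map, combined with the commuting square of posets $\hat\Y\to Y\to X$ and $\hat\Y\to\hat X$) then gives the homotopy between $\|\pi\|\circ\|f^Y\|$ and $\eta^{-1}\circ\pi$. One has to be slightly careful that $\hat A_\aa^Y$ being taken sufficiently close to $A_\aa^Y$ guarantees that the cell decomposition $\hat\Y$ refines the pullback of $\hat X$, so that the two poset maps out of $\hat\Y$ are compatible; this is the analogue of the ``sufficiently close'' hypothesis in Proposition~\ref{prop:tame}.

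The main obstacle I anticipate is not the order-preservation in part (1)---that is essentially bookkeeping once one has the Whitney-stratification comparison from Section~\ref{sec:MacP}---but rather making the homotopy-commutativity in part (2) genuinely rigorous, because $\|f^Y\|$ is only well-defined up to homotopy (it depends on the choice of triangulation refining the stratification) and $\eta^{-1}$ is a fixed homeomorphism, so one is really comparing a homotopy class with an honest map. The cleanest route is to produce, for the specific regular cell decomposition $\hat\Y$ at hand, an explicit poset-level factorization $\hat\Y\to Y$ over $\hat X\to X$ (which is what $f^Y$ together with the underlying cell map provides) and then invoke the standard fact that a commuting square of posets realizes to a commuting square of order complexes, composed with the homotopy $\|X\|\simeq\|\hat X\|$ coming from barycentric subdivision and the homeomorphism $\eta$. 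I would flag that the identification of $\Y$ with $\Y'$ via Proposition~\ref{prop:pullbackaffine}, used to even define $A_\aa^Y$, should be fixed once and for all at the start so that ``$W\in\Gr(2,\hat A_\aa(m))$'' is unambiguous.
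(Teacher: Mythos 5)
Your proposal is correct and follows essentially the same route as the paper: part (1) is exactly the paper's appeal to upper semicontinuity of $\mu$ (with the bookkeeping about $\Delta_{\sigma'}\subseteq\Delta_\sigma$ and loops spelled out), and part (2) is the paper's observation that both composites send each cell of $\hat\Y$ into the realization of a common star in $X$, followed by a straight-line homotopy. The only nitpick is the passing mention of a ``covector'' for the rank-$2$ datum, which belongs to $Z$ rather than $Y$ and is not needed here.
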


\begin{proof}That $f^Y$ is a poset map follows from upper semicontinuity of $\mu$.

For each cell $\sigma$ in $\hat\Y$, there is a cell $\tau$ of $\hat X$ such that $\pi(\sigma)\subseteq \tau$ and $f^Y(\sigma)=(\tau,\M,\N)$ for some $\M$ and $\N$. The map $\| f^Y\|$ sends $\sigma$ into $\|\Star(\tau,\M,\N)\|$, and so both  $\pi\circ\| f^Y\|$ and $\eta^{-1}\circ \pi$ send  $\sigma$ into $\|\Star(\tau)\|$. Thus these maps are homotopic by a straight-line homotopy.
\end{proof}

In what follows we let $\eta^Y$ denote $\Cx f^Y$.

Any $\hat A_\aa$ sufficiently close to $A_\aa$ is homotopic to $A_\aa$, so the pulback of the bundle $\Flag(n+1,2,1,\R^{X^0})\to \Flag(n+1,2,\R^{X^0})$ by $\hat A_\aa$ is isomorphic to $\rho: \Z\to\Y$. Let $\rho':\Z'\to \Y$ denote this pullback bundle. 

For each cell $\sigma\in\hat\Y$ and each nonzero covector $\z$ of the rank 2 oriented matroid  associated to $\sigma$, let $[\sigma,\z]=\{(m,V,\x)\in\Z: (m,V)\in\sigma, \x\in (V-\{0\})/\R_{>0},\sign(\x) =\z\}$.

\begin{prop} The set $\hat\Z$ of all nonempty closures of sets $[\sigma,\z]$ is a regular cell decomposition of $\Z$.
\end{prop}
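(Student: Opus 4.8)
The plan is to verify the three defining properties of a regular cell decomposition: (1) the sets $[\sigma,\z]$ (equivalently their closures) partition $\Z$; (2) each closure $\overline{[\sigma,\z]}$ is homeomorphic to a closed ball with $[\sigma,\z]$ as its interior; and (3) the boundary $\overline{[\sigma,\z]}\setminus[\sigma,\z]$ is a union of lower-dimensional cells of the same form. First I would observe that $\rho':\Z'\to\Y$ is a fiber bundle with fiber $S^1=(\R^2\setminus\{0\})/\R_{>0}$, and that over a fixed point $(m,V)$ lying in the interior of a cell $\sigma$ of $\hat\Y$, the oriented matroid $\N(\sigma)$ associated to $\sigma$ is the oriented matroid of the projected coordinate arrangement in $V$; its nonzero covectors index, via the Topological Representation Theorem, the open arcs and vertices of a cell decomposition of this $S^1$. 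Thus over $\mathrm{int}(\sigma)$ the sets $[\sigma,\z]$ restrict to a product cell decomposition $\mathrm{int}(\sigma)\times(\text{cell of }S^1)$, which already gives a decomposition of $(\rho')^{-1}(\mathrm{int}(\sigma))$ into relatively open cells. Since the cells $\mathrm{int}(\sigma)$ partition $\Y$, property (1) follows, and each such piece is an open ball (product of open balls), giving the interior statement in (2).

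The substance is in the boundary/closure analysis, which I would handle by relating degenerations in $\hat\Y$ to degenerations in $S^1$. Suppose $\sigma'\subsetneq\sigma$ is a face in $\hat\Y$. By the weak-map property built into the construction of $Y$ (and into $\hat\Y$ as a subdivision of the Whitney stratification, via upper semicontinuity of $\mu$ from Proposition~\ref{prop:muproperties}), $\N(\sigma')$ is a weak map image of $\N(\sigma)$. The key geometric input is the behavior of the $S^1$ cell decompositions under weak maps: as one moves from $\mathrm{int}(\sigma)$ toward $\mathrm{int}(\sigma')$, the pseudocircle for $\N(\sigma)$ degenerates to that for $\N(\sigma')$, and a covector $\z$ of $\N(\sigma)$ has in its closure exactly the cells indexed by covectors $\z'$ of $\N(\sigma')$ with $\z'\le \z_{\Delta'}$ — this is precisely the ordering used in Definition~\ref{defn:assocposet}, and its nonemptiness/connectedness is the content of the arc statements cited in the proof of Proposition~\ref{prop:circleqf} (the open-arc argument for $\{z':z'\ge z_{\Delta'}\}$ and Theorem~0.1 of~\cite{weakmapsrep} for the closed arc $\{z':z\ge z'_{\Delta'}\}$). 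Combining this with the description of $\overline{\sigma}$ in $\hat\Y$, I would conclude that $\overline{[\sigma,\z]}=\bigcup\{[\sigma',\z']: \sigma'\subseteq\sigma,\ \z'\le\z_{\Delta_{\sigma'}}\}$, and that this closure is a closed ball: it fibers over $\overline{\sigma}$ (a ball) with fiber a closed arc of $S^1$ (a closed interval), the fibration being trivializable because $\overline\sigma$ is contractible. This gives (2) and (3) simultaneously.

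The main obstacle will be controlling the topology of the closure $\overline{[\sigma,\z]}$ near the ``bad'' points where $\sigma$ itself is not in the interior of a single stratum in the original Whitney stratification of $\Y$ — i.e., making precise that, even though $\hat\Y$ subdivides the stratification, the oriented matroid $\N(\sigma)$ and hence the arc structure in the $S^1$ fiber really is constant on $\mathrm{int}(\sigma)$ and degenerates correctly along faces. Concretely, one must show the assignment $\sigma\mapsto\N(\sigma)$ is order-reversing-compatible (a poset map to $Y$, already essentially recorded as $f^Y$) and that the resulting combinatorial model of the $S^1$-bundle agrees with the honest topological closures. I expect this to reduce to a local statement: over a small neighborhood of a point of $\mathrm{int}(\sigma')$, the bundle $\rho'$ trivializes, and the cell structure one gets is the mapping cylinder / cone-like thickening of the $S^1$ decomposition for $\N(\sigma')$ by those for the finitely many $\N(\sigma)$ with $\sigma\supseteq\sigma'$ appearing in the star; regularity of $\hat\Y$ and the arc lemmas then force each closure to be a ball. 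Once that local picture is in hand, assembling it over all of $\Y$ is routine, and I would close by invoking the standard criterion that a locally-ball-like, boundary-compatible partition indexed by a poset is a regular CW (indeed regular cell) decomposition.
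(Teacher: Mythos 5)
Your overall strategy is the same as the paper's: partition $\Z$ fiberwise over the cells of $\hat\Y$, index the fiber cells by the nonzero covectors of the rank~2 oriented matroid attached to each cell, and control closures by how those covectors degenerate along faces. The paper's proof is just the concrete execution of this: for a rank~0 covector $\z$ it exhibits $[\sigma,\z]$ as a section over $\sigma$ that extends to $\overline\sigma$, so $\overline{[\sigma,\z]}\cong\overline\sigma$; for a rank~1 covector it identifies $[\sigma,\z]\cong\sigma\times(0,1)$ and shows that over each boundary cell $\sigma'$ of $\overline\sigma$ the set $\overline{[\sigma,\z]}\cap\rho^{-1}(\sigma')$ is homeomorphic to either $\sigma'\times I$ or $\sigma'$, so the closure is $\overline\sigma\times I$ modulo boundary collapses that do not change the homeomorphism type.

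However, the step where you actually conclude that $\overline{[\sigma,\z]}$ is a ball does not work as stated: you claim it ``fibers over $\overline\sigma$ with fiber a closed arc of $S^1$, the fibration being trivializable because $\overline\sigma$ is contractible.'' The closure is not a fiber bundle over $\overline\sigma$: over a face $\sigma'\subset\partial\overline\sigma$ the arc can collapse, so the fiber of the closure there is a single point rather than an interval (this is exactly the $\overline{[\sigma,\z]}\cap\rho^{-1}(\sigma')\cong\sigma'$ case in the paper), and contractibility of the base is irrelevant if there is no bundle to trivialize. Your ``main obstacle'' paragraph correctly senses this (the mapping-cylinder/cone picture), but it defers the resolution to an unspecified local statement and ``a standard criterion,'' whereas showing that the partially collapsed product $\overline\sigma\times I$ is still homeomorphic to a closed ball, with the cells $[\sigma',\z']$ appearing correctly on its boundary, is precisely the non-routine content of the proposition. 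So as written the argument has a gap at its central claim; filling it amounts to carrying out the face-by-face analysis that the paper's proof performs. (A further, smaller imprecision: your description of which $[\sigma',\z']$ occur in the closure via the inequality $\z'\le\z_{\Delta_{\sigma'}}$ is not needed and is not obviously the right condition; the paper avoids this by only establishing the two possible homeomorphism types over each boundary cell.)
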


\begin{proof}  For each $(m,W)\in\sigma$ the fiber $\rho^{-1}(m,W)$ is a circle with a cell decomposition indexed by the nonzero covectors $\z$ of the rank 2 oriented matroid $\M$ associated to $\sigma$. In particular,  each $\z$ has rank 0 or 1 in $\V^*(\M)-\{0\}$.

If $\z$ is rank 0 then for each $(m, W)\in\sigma$ there is a unique $(m,W,\x)\in[\sigma,\z]$ such that $\sign(\x)=\z$, and the map $\sigma\to \rho^{-1}(\sigma)$ taking $(m,W)$ to $(m,W,\x)$ is a section of the restriction of $\rho$, and this section extends to the closure of $\sigma$. Thus the closure of $[\sigma,\z]$ is homeomorphic to the closure of $\sigma$.

If $\z$ is rank 1 then there are two rank 0 $\z_1, \z_2\in\V^*(\M)-\{0\}$ such that $\z_1<\z>\z_2$, and $[\sigma,\z]\cap\rho^{-1}(m,W)$ is the 1-cell with boundary points $[\sigma,\z_1]\cap\rho^{-1}(m,W)$ and $[\sigma,\z_2]\cap\rho^{-1}(m,W)$. Thus $[\sigma, \z]\cong \sigma\times (0,1)$ and $\overline{[\sigma,\z]}\cap\rho^{-1}(\sigma)
\cong\sigma\times[0,1]$. (Here $(0,1)$ and $[0,1]$ denote intervals.)  The boundary of $\overline\sigma$ is a regular cell complex, and for each cell $\sigma'$ in this boundary  $\overline{[\sigma,\z]}\cap\rho^{-1}(\sigma')$ is homeomorphic to either $\sigma'\times I$ or $\sigma'$. Thus $\overline{[\sigma,\z]}\cap\rho^{-1}(\overline{\sigma})$ is obtained from $\overline{\sigma}\times I$ by quotients on the boundary that don't change the homeomorphism type.
\end{proof}

Treating $\hat\Y$ and $\hat Z$ as posets of cells, we get simplicial complexes $\Cx\hat\Y$ and $\Cx\hat\Z$ whose geometric realizations are homeomorphic to $\Y$ and $\Z$, and we get a commutative diagram of simplicial maps 

\begin{tikzcd}
\Cx\hat\Z\arrow[r,"\eta^Z"]\arrow[d,"\rho"]&\Cx Z\arrow[d,"\Cx\rho"]\\
\Cx\hat\Y\arrow[r,"\eta^Y"]&\Cx Y
\end{tikzcd}

\noindent which is a map of $S^1$ quasifibrations. Also, we have

\begin{tikzcd}
\Cx\hat\Y\arrow[r,"\eta^Y"]\arrow[d]&\Cx Y\arrow[d,"\Cx\pi"]\\
\Cx\hat X\arrow[r]&\Cx X
\end{tikzcd}

\noindent which commutes up to homotopy.

\section{The combinatorial formula}
 
 Let $Z$ be the affine associated poset of  $X$. As we have seen, associated to $Z$ are a poset $Y$, a homotopy class of maps $\eta^Y:\Y\to\|Y\|$,  and  a simplicial  cocycle
$\Omega$ such that $[\Omega]=c_1(Z,\OO)$.

\begin{thm}\label{thm:combformula} Let $\eta:\|X\|\to M$ be a smoothing of a simplicial manifold $X$ of  dimension $n$. Let $\phi$ be the homology class $[\eta^Y(\Y)]$. 
Then  \[\tilde{p}_i(X)\frown[X]
= (-1)^i \|\pi\|_* ([\Omega]^{n+2i-1} \frown\phi).\]
\end{thm}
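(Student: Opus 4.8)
The plan is to transport the smooth identity of Corollary~\ref{cor:smoothform}, namely $\tilde p_i(TM\oplus\mathbf 1)\frown[M]=\pi_*((-1)^i c_1(\Z,\OO)^{n+2i-1}\frown[\Y])$ (recall the fiber rank of $\xi=TM\oplus\mathbf 1$ is $a=n+1$, so $a-2+2i=n+2i-1$, and $\tilde p_i(TM\oplus\mathbf 1)=\tilde p_i(TM)=\tilde p_i(X)$), across the diagram of maps constructed in Section~\ref{sec:flags}. The three essential inputs are already in place: (i) the commutative-up-to-homotopy square relating $\pi:\Y\to M$, $\|\pi\|:\|Y\|\to\|X\|$, $\eta^{-1}$, and $\eta^Y$; (ii) the fact that the top square $\Cx\hat\Z\to\Cx Z$, $\Cx\hat\Y\to\Cx Y$ is a map of $S^1$-quasifibrations, so that $\eta^Y$ pulls the twisted Chern class back correctly; and (iii) the cocycle $\Omega$ from Appendix~\ref{app:ss} with $[\Omega]=c_1(Z,\OO)$. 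So the proof is essentially a naturality/diagram chase, modulo bookkeeping of twisted coefficients.

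First I would establish the coefficient dictionary. The orientation presheaf $\OO$ on $\|Y\|$ is the one for the $S^1$-quasifibration $\|\rho\|$, and since the map $\Cx\hat\Y\to\Cx Y$ is a map of quasifibrations, $(\eta^Y)^*\OO$ is the orientation presheaf of $\rho:\Z\to\Y$, i.e.\ the class we called $\OO$ on $\Y$. By naturality of the twisted first Chern class under maps of circle quasifibrations (Appendix~\ref{app:ss}), $(\eta^Y)^*[\Omega]=(\eta^Y)^*c_1(\|Z\|,\OO)=c_1(\Z,\OO)$. This is the key step that turns the abstract simplicial cocycle into the curvature-form class $[\tfrac{1}{2\pi}\Omega_{12}]$ living over the smooth $\Y$. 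Next, since even powers of $c_1$ live in constant (rational) coefficients, $(\eta^Y)^*([\Omega]^{n+2i-1})$ differs from $c_1(\Z,\OO)^{n+2i-1}$ only in how the single ``twist'' is carried; I would phrase everything in the $\DD$-twisted homology/cohomology of $M$ and $\pi^*\DD$-twisted of $\Y$, exactly as in the remarks following Corollary~\ref{cor:smoothform}, so that $[\Y]\in H_*(\Y,\pi^*\DD)$ and $\eta^Y_*[\Y]=\phi\in H_*(\|Y\|,\|\pi\|^*\DD)$ by definition of $\phi$.

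Then I would run the computation. Starting from Corollary~\ref{cor:smoothform},
\[
\tilde p_i(X)\frown[X]=\eta_*\bigl(\tilde p_i(TM\oplus\mathbf 1)\frown[M]\bigr)
=\eta_*\pi_*\bigl((-1)^i c_1(\Z,\OO)^{n+2i-1}\frown[\Y]\bigr).
\]
Here I identify $[X]$ with $\eta^*[M]$ (i.e.\ $\eta_*[X]=[M]$ in $\DD$-twisted homology) and use that the bottom square commutes up to homotopy, so $\eta_*\pi_*=\|\pi\|_*\eta^Y_*$ on homology. Substituting $c_1(\Z,\OO)^{n+2i-1}=(\eta^Y)^*([\Omega]^{n+2i-1})$ and applying the projection formula for the cap product along the map $\eta^Y$,
\[
\eta^Y_*\bigl((\eta^Y)^*([\Omega]^{n+2i-1})\frown[\Y]\bigr)=[\Omega]^{n+2i-1}\frown\eta^Y_*[\Y]=[\Omega]^{n+2i-1}\frown\phi,
\]
which yields $\tilde p_i(X)\frown[X]=(-1)^i\|\pi\|_*([\Omega]^{n+2i-1}\frown\phi)$, as claimed.

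The main obstacle I anticipate is not any single hard theorem but the careful handling of the twisted coefficient systems through all the maps — making sure that $\OO$, $\pi^*\DD$, $\|\pi\|^*\DD$, and their tensor products match up under $\eta^Y$ and $\Cx\pi$, and that the projection formula and the naturality of the Leray–Serre $d_2$ differential are invoked with the right coefficients. A secondary technical point is justifying $\eta_*[X]=[M]$ and $\eta^Y_*[\Y]=\phi$ at the chain level, since $\eta$ is only a homeomorphism that is smooth on simplices and $\eta^Y$ is only defined up to homotopy; but homotopy invariance of twisted homology and the fact that $\eta^Y$ was built as a subdivision-compatible approximation take care of this. Everything else is bookkeeping with the projection formula and the commuting (up to homotopy) squares already assembled in Section~\ref{sec:flags}.
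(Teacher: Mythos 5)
Your proposal is correct and follows essentially the same route as the paper's proof: apply Corollary~\ref{cor:smoothform} with $a=n+1$, use the homotopy-commuting bottom square to get $\eta^{-1}_*\pi_*=\|\pi\|_*\eta^Y_*$, pull back the twisted Chern class along the map of $S^1$-quasifibrations, and finish with the projection formula and $[\Omega]=c_1(Z,\OO)$. The only blemish is notational: you write $\eta_*$ where you mean $\eta^{-1}_*$ (or $(\eta_*)^{-1}$), which is harmless since $\eta$ is a homeomorphism.
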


\begin{remark}  If we start with $Z$ being the linear associated complex rather than affine, then the formula   is 
 \[\tilde{p}_i(X)\frown[X]
= (-1)^i \|\pi\|_* ([\Omega]^{n+2i-2} \frown\phi)\]
 and the proof  is essentially the same.
 \end{remark}

\begin{proof}
 In the diagram

\begin{tikzcd}
\Z\arrow[r, "\eta^Z"]\arrow[d]&\|Z\| \arrow[d] \\
\Y\arrow[d, "\pi"]\arrow[r, "\eta^Y"]&\|Y\| \arrow[d,"\|\pi\|"]\\
M\arrow[r,"\eta^{-1}"]&\|X\|
\end{tikzcd}

\noindent the top square is a map of quasifibrations with homotopy fiber $S^1$, so that Chern classes pull back, and the bottom square commutes up to homotopy, so that in homology
$\|\pi\|_*\eta^Y_*=\eta^{-1}_*\pi_*$. Also, $\eta^{-1}$ is a homeomorphism. Thus
\begin{align*}
 \tilde{p}_i(X)\frown[X]&=\eta^{-1}_* (\tilde{p}_i(M)\frown[M])\\
 &=\eta^{-1}_*\pi_*((-1)^ic_1(\Z,\OO)^{n-1+2i}\frown[\Y])\qquad\mbox{by Corollary~\ref{cor:smoothform}}\\
 &=(-1)^i\|\pi\|_*\eta^Y_*(c_1(\Z,\OO)^{n-1+2i}\frown[\Y])\\
 &=(-1)^i\|\pi\|_*(c_1(Z,\OO)^{n-1+2i}\frown\phi)\\
&=(-1)^i\|\pi\|_*([\Omega]^{n-1+2i}\frown\phi)\\
\end{align*}

\end{proof}

In some sense the formula of Theorem~\ref{thm:combformula} isn't truly combinatorial. The class $\phi$ is defined in terms of $\Y$, which in turn is defined in terms of the smooth structure on $M$. In the next section we address the conjecture that $\phi$ is  intrinsic to $X$.

\section{Fixing cycles \label{sec:fixing}}

The difficulty in getting to a purely combinatorial formula for the Pontrjagin classes of a simplicial manifold  by Gelfand and MacPherson's approach is the lack of a purely combinatorial analog to the fundamental class of $\Y$. 

\begin{defn} Let $X$ be a simplicial manifold of dimension $n$ with orientation presheaf $\DD$. A {\em fixing cycle} for $X$ is a $(3n -2)$-cycle $\phi\in Z_{3n-2} (Y, \pi^*(\DD))$ such that $\|\pi\|_*(\Omega^{n-1}\frown[\phi])=[X]$.

The fixing cycle is {\em affine} or {\em linear} depending on whether the associated complex is affine or linear.
\end{defn}

The idea is that a fixing cycle should be a combinatorially-defined analog of the fundamental class $[\Y]$.

\begin{prop} The class $\phi=\eta^Y(\Y)$ is an example of a fixing cycle. 
\end{prop}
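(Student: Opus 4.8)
The plan is to verify that $\phi=\eta^Y_*(\Y)$, with $\Y$ carrying its (twisted) fundamental class, satisfies both clauses of the definition of a fixing cycle: it must be a cycle in $Z_{3n-2}(Y,\pi^*\DD)$, and it must satisfy $\|\pi\|_*(\Omega^{n-1}\frown[\phi])=[X]$. The second clause is the substantive one, and it is essentially the content of Theorem~\ref{thm:combformula} in the case $i=0$, so most of the work is bookkeeping to extract that case cleanly.

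First I would check the degree and cycle condition. The space $\Y$ is the Grassmannian $2$-plane bundle over $M$, with fiber $\Gr(2,\R^{n+1})$ of dimension $2(n+1-2)=2(n-1)$, so $\dim\Y=n+2(n-1)=3n-2$. Since $\pi$ is orientable, the orientation presheaf of $\Y$ is $\pi^*\DD$ (as noted in the remark following Corollary~\ref{cor:smoothform}), so $\Y$ has a well-defined fundamental class $[\Y]\in H_{3n-2}(\Y,\pi^*\DD)$; pushing a cycle representative forward along the simplicial map $\eta^Y=\Cx f^Y$ (which by construction is compatible with the coefficient systems, since $f^Y$ covers $\eta^{-1}$ up to homotopy and $\pi^*\DD$ pulls back correctly) yields a cycle $\phi\in Z_{3n-2}(Y,\pi^*\DD)$. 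So the first clause holds essentially by definition.

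For the second clause, I would run the computation in the proof of Theorem~\ref{thm:combformula} with $i=0$. There $\tilde p_0=1$, so $\tilde p_0(X)\frown[X]=[X]$, and the displayed chain of equalities gives
\[
[X]=\tilde p_0(X)\frown[X]=(-1)^0\|\pi\|_*([\Omega]^{n-1+0}\frown\phi)=\|\pi\|_*([\Omega]^{n-1}\frown\phi),
\]
which is exactly the fixing-cycle identity. Concretely this reuses: the identification $\phi=\eta^Y_*([\Y])$ with $c_1(\Z,\OO)$ pulling back to $[\Omega]$ along the map of quasifibrations (top square), the homotopy-commutativity of the bottom square giving $\|\pi\|_*\eta^Y_*=\eta^{-1}_*\pi_*$, and Corollary~\ref{cor:smoothform} in the case $i=0$, namely $\tilde p_0(E)\frown[M]=[M]=\pi_*(c_1(\Z,\OO)^{n-1}\frown[\Y])$.

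The main obstacle, such as it is, is not any single hard step but making sure the twisted-coefficient bookkeeping is airtight: one must confirm that $\eta^Y$ respects the orientation presheaves so that $\eta^Y_*[\Y]$ genuinely lands in $Z_{3n-2}(Y,\pi^*\DD)$ and that $\Omega$ (a cocycle with coefficients in $\OO$) pairs against it to give a class in homology with coefficients in $\pi^*\DD$, with $\|\pi\|_*$ then landing in $H_*(X,\DD)$ as required for $[X]$. Once the relevant orientation systems are matched — which is guaranteed by the constructions of Section~\ref{sec:flags} and Appendix~\ref{app:ss} — the proposition is an immediate specialization of Theorem~\ref{thm:combformula}, and I would simply write ``This is the $i=0$ case of Theorem~\ref{thm:combformula}, together with the observation that $\dim\Y=3n-2$ and $\tilde p_0(X)=1$.''
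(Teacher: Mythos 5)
Your proposal is correct and matches the paper's own argument: the paper likewise verifies the fixing-cycle identity by running the chain of equalities from the proof of Theorem~\ref{thm:combformula} with $i=0$, using $\tilde p_0=1$, Corollary~\ref{cor:chernweil1}, the homotopy-commutativity giving $\|\pi\|_*\eta^Y_*=\eta^{-1}_*\pi_*$, and the pullback of the Chern class to $[\Omega]$. Your additional dimension and twisted-coefficient checks ($\dim\Y=3n-2$, coefficients in $\pi^*\DD$) are consistent with the paper's remarks and do not change the route.
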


\begin{proof} $\tilde p_0(M)=1$, so 
\begin{align*}
[X]&=\eta^{-1}_*(\tilde p_0(M))\frown[M])\\
&=\eta^{-1}_*\pi_*(c_1(\Z,\OO)^{n+1-2}\frown [\Y])&\mbox{by Corollary~\ref{cor:chernweil1}}\\
&=\|\pi\|_*(\eta^Y_*(c_1(\Z,\OO)^{n-1}\frown[\Y]))\\
&=\|\pi\|_*(c_1(Z,\OO)^{n-1}\frown \eta^Y_*[\Y])\\
&=\|\pi\|_*(\Omega^{n-1}\frown [\eta^Y_*(\Y)])
\end{align*}
\end{proof}

\begin{conj}\label{conj:fixing} If $X$ is a simplicial manifold then all affine resp. linear fixing cycles are homologous.
\end{conj}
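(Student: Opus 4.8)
\emph{The plan.} The conjecture reduces to a single homological assertion about the associated complex, which in turn should follow from the good behavior of rank~$2$ combinatorial Grassmannians. Work with rational coefficients (the integral statement requires in addition only that the group below be torsion free of rank one), and treat the affine case; the linear case is identical with $3n-2$ replaced by $3n-4$ and $[\Omega]^{n-1}$ by $[\Omega]^{n-2}$ throughout. By the preceding proposition, $\phi_0:=\eta^Y(\Y)$ is a fixing cycle. Since $X$ is a closed connected $n$-manifold, $H_n(\|X\|,\DD)\cong\Q$, generated by $[X]$, and ``being a fixing cycle'' is the \emph{single} linear condition
\[\Psi([\phi]):=\|\pi\|_*\bigl([\Omega]^{\,n-1}\frown[\phi]\bigr)=[X]\in H_n(\|X\|,\DD)\cong\Q,\]
where $\Psi\colon H_{3n-2}(\|Y\|,\pi^*\DD)\to H_n(\|X\|,\DD)$ is a well-defined $\Q$-linear map (the coefficient bookkeeping is the one that already makes the cap product in the preceding proposition meaningful). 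Since $\Psi([\phi_0])=[X]\ne0$, the map $\Psi$ is surjective; the homology classes of fixing cycles are exactly $\Psi^{-1}([X])$, which is a single class if and only if $\Psi$ is injective, i.e.\ if and only if $\dim_\Q H_{3n-2}(\|Y\|,\pi^*\DD)=1$. So the conjecture is equivalent to the one-dimensionality of this homology group, in which case the unique fixing-cycle class is $[\phi_0]$.

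\emph{The intended computation.} I would establish $\dim_\Q H_{3n-2}(\|Y\|,\pi^*\DD)=1$ by showing that the comparison map $\eta^Y\colon\Y\to\|Y\|$ is a homology equivalence (conjecturally a homotopy equivalence): then $H_{3n-2}(\|Y\|,\pi^*\DD)\cong H_{3n-2}(\Y,\pi^*\DD)\cong\Q$, the last isomorphism holding because $\Y$ is a closed connected $(3n-2)$-manifold orientable with respect to these coefficients (Proposition~\ref{prop:yorientable} and its odd-dimensional analog). The evidence that $\eta^Y$ is such an equivalence is that, after the straight-line homotopies already used to make the bottom square of the diagram commute, the restriction of $\eta^Y$ over a point $m\in M$ is a map $\Gr(2,\hat A_\aa(m))\to\|\Gr(2,\M_m)\|$ of exactly the type that Babson~\cite{Bab} proved is a homotopy equivalence, and that the first author~\cite{Abawrank2} sharpened to a homeomorphism $\|\Gr(2,\M)\|\cong\Gr(2,\R^{\rank\M})$. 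Thus $\eta^Y$ is a \emph{fiberwise} homology equivalence over $M\cong\|X\|$, and one wants a fiberwise Whitehead (or spectral-sequence comparison) theorem to globalize it.

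\emph{The main obstacle.} The difficulty is precisely this globalization. Running a fiberwise Whitehead comparison needs $\|\pi\|\colon\|Y\|\to\|X\|$ to be a quasifibration, or at least to have homology forming a local system on the base; proving this calls for a Quillen Theorem~B argument~\cite{Quillen} for the poset map $\pi\colon Y\to X$ --- establishing contractibility, or the correct homotopy type, of the order complexes $\|\pi^{-1}(\Delta')_{\ge(\Delta,t,y)}\|$ and $\|\pi^{-1}(\Delta')_{\le(\Delta,t,y)}\|$ --- together with an identification of the homotopy fibers with $\|\Gr(2,\cdot)\|$. An alternative that sidesteps quasifibrations is to filter $\|X\|$ by the skeleta of $\hat X$ and induct, gluing in Babson's (and Abawonse's homeomorphic) model over each cell via Mayer--Vietoris; then the point to control is how the part of $\|Y\|$ lying over a cell of $\hat X$ is attached to the part over its boundary. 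Either way one is proving that combinatorial Grassmannian $2$-plane bundles carry the homotopy type of the corresponding smooth bundles: unlike the open conjecture $\Gr(r,\R^S)\simeq\|\MacP(r,S)\|$ in higher rank, every \emph{local} ingredient here is in hand precisely because the fiber rank is $2$, so the whole obstruction lives in the gluing. If even that proves stubborn, it would already suffice to establish the upper bound $\dim_\Q H_{3n-2}(\|Y\|,\pi^*\DD)\le1$: granted a quasifibration structure on $\|\pi\|$, this drops out of the Leray--Serre spectral sequence, whose $E^2$-page involves only $H_p\bigl(\|X\|,\DD\otimes\underline{H_q(\Gr(2,\R^{n+1}))}\bigr)$ and concentrates total degree $3n-2$ in the single one-dimensional summand $(p,q)=(n,2n-2)$ --- so even the fallback rests on producing enough fibration-like structure over the base.
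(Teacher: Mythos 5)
You have not proved the statement, and you should know that the paper does not either: Conjecture~\ref{conj:fixing} is stated as an open conjecture, and Appendix~\ref{app:oops} explains that even Gelfand--MacPherson's original Theorem~1 is only recovered modulo it. So there is no paper proof to match; the question is whether your argument closes the gap, and it does not. Your reduction is the worthwhile part: since being a fixing cycle is the single linear condition $\Psi([\phi])=[X]$ with $\Psi$ surjective onto $H_n(\|X\|,\DD)\cong\Q$, the conjecture is indeed equivalent to injectivity of $\Psi$, i.e.\ to $\dim_\Q H_{3n-2}(\|Y\|,\pi^*\DD)=1$ (for connected $X$; one caveat below). But everything you then propose in order to establish that dimension count --- that $\eta^Y$ is a homology equivalence, or that $\|\pi\|:\|Y\|\to\|X\|$ is a quasifibration with fiber of the homotopy type of $\Gr(2,\R^{n+1})$, or even just the Leray--Serre upper bound --- is exactly the hard part, and you leave it open, as you say yourself. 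Note also that the fiberwise input you cite (\cite{Bab}, \cite{Abawrank2}) is about $\|\Gr(2,\M)\|$ for a \emph{single} oriented matroid $\M$, whereas the fiber of the poset map $\pi:Y\to X$ over a simplex $\Delta$ is the order complex of \emph{all} pairs $(t,y)$ with $t$ ranging over all affine charts at $\Delta$ and $y\in\Gr(2,t)$; identifying this (or the homotopy fiber of $\|\pi\|$) with $\|\Gr(2,\M)\|$, let alone with $\Gr(2,\R^{n+1})$, is itself unproven and is of the same flavor as the open comparison problems for combinatorial Grassmannians recalled in Section~\ref{sec:eric}. The paper only proves quasifibration structure for $\|\rho\|:\|Z\|\to\|Y\|$ (Proposition~\ref{prop:circleqf}), not for $\|\pi\|$, and that is precisely why the conjecture is left open.

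One further inaccuracy in your sketch: the claim that $\Y$ is ``orientable with respect to these coefficients'' so that $H_{3n-2}(\Y,\pi^*\DD)\cong\Q$ is only correct when $a=n+1$ is even, i.e.\ $n$ odd. When $n$ is even, Lemma~\ref{lem:chartsonGodda} and the surrounding discussion show the orientation presheaf of $\Y$ is $\OO^{\otimes(n-1)}\otimes\pi^*\DD=\OO\otimes\pi^*\DD$, which is a nontrivial twist of $\pi^*\DD$; with literal $\pi^*\DD$ coefficients the top twisted homology of $\Y$ then vanishes rather than being one-dimensional, and the same parity issue infects your Leray--Serre ``fallback'' term at $(p,q)=(n,2n-2)$. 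So even the target of your intended computation needs the coefficient system adjusted by the fiberwise orientation twist in half the cases before the globalization problem is confronted. As it stands, your submission is a reasonable reformulation and research plan, not a proof.
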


Verification of this conjecture would make the formula of Theorem~\ref{thm:combformula} a true ``combinatorial" formula: in principle, one could calculate a fixing cycle without having a smoothing.

\section{Acknowledgements} 
We thank Cary Malkiewich for helpful conversations.  

\appendix

\section{De Rham and simplicial cohomology}
In the main body of the paper we go from formulas in terms of de Rham cohomology to combinatorial formulas in terms of simplicial cohomology. He we briefly review how one goes between these cohomology theories. 

Most of this discussion is quite standard: see, for instance~\cite{BT} and~\cite{Hatcher} for more detail.

De Rham cohomology is a cohomology with real coefficients associated to smooth manifolds.

\begin{defn}
	A $k$-form on a smooth manifold $M$ is a smooth assignment to each $p \in M$ of an alternating $k$-linear function 
	$$\omega_p : T_pM \times \cdots \times T_pM \rightarrow \mathbb{R}$$
 We denote the space of all $k$-forms on $M$ by $\Omega_{dR}^k(M)$.	
\end{defn}

In local coordinates on an $n$-dimensional smooth manifold, a $k$-form can be expressed as $\omega = \sum_I \omega_I dx^I$, where $I$ ranges over $k$-subsets of $\{1,\ldots, n\}$ and each $\omega_I:M\to\mathbb R$ is a smooth function.
There is a differential map $d:  \Omega_{dR}^k(M) \rightarrow \Omega_{dR}^{k+1}(M)$ that is expressed in terms of local coordinates as 
\[d(\sum_I \omega_I dx^I) = \sum_I d\omega_I \wedge dx^I\]

 The cochain complex $\{\Omega_{dR}^k(M),d\}$ of a compact smooth manifold $M$ is called the {\em de Rham cochain complex}, and the cohomology associated to this cochain complex is the {\em de Rham cohomology}.

Simplicial cohomology can be defined with coefficients in an arbitrary ring $R$. If $X$ is a simplicial complex and $\{v_0, \ldots, v_k\}$ is the vertex set of a simplex $\sigma$ then we define two orderings of $\{v_0, \ldots, v_k\}$ to be equivalent if they differ by an even permutation, and we define an {\em orientation} on $\sigma$ to be one of the two equivalence classes. We define $C_k(X,R)$ to be the module over $R$ generated by the set of oriented $k$-simplicies of $X$, modulo the relation that opposite orientations of a simplex are inverses, and we define  $C^k(X,R)$ to be the dual $\Hom(C_k(X),R)$. There are differentials 
\begin{align*}
d:C_k(X,R)&\to C_{k-1}(X,R)\\
d[x_0,\ldots, x_k]&=\sum_{i=0}^k (-1)^i[x_0,\ldots, \hat x_i,\ldots,x_k]
\end{align*} 
and
\begin{align*}
\partial:C^k(X,R)&\to C^{k+1}(X,R)\\
(\partial\sigma)([x_0,\ldots, x_{k+1}])&=\sum_{i=0}^{k+1} (-1)^i\sigma[x_0,\ldots, \hat x_i,\ldots,x_{k+1}]
\end{align*}
We define the simplicial homology $H_*(X,R)$ and cohomology $H^*(X,R)$  of $X$ with coefficients in $R$  to be the homology and cohomology associated to $\{C_k(X,R), d\}$ and $\{C^k(X,R),\partial\}$. Any two simplicial complexes with the same underlying topological space $M$ have canonically isomorphic homology and cohomology, so we refer to the simplical homology and cohomology of the underlying space.

To see an isomorphism between the de Rham and simplicial cohomology with real coefficients of a smoothly triangulated manifold, we assume our simplicial complex $X$ to be a geometric simplicial complex in Euclidean space. If $h: X\to M$ is a homeomorphism that is smooth on closed simplices, then we define $I:\Omega_{dR}^k(M)\to C^k(X,\R)$ by 
\[ I([\Omega])([\sum_{\sigma}a_{\sigma}\sigma])=\sum_{\sigma}a_{\sigma}\int_{[\sigma]}h^*\Omega\]
This is a map of cochain complexes by Stokes' Theorem, and  it can be shown to induce an isomorphism in cohomology. 

If $\xi:E\to B$ is a real vector bundle and $\omega$ is a form on $E$, we say that $\omega$ has {\em compact vertical support} if, for each compact $K\subseteq M$, $\xi^{-1}(K)\cap\supp(\omega)$ is compact. The set of forms with compact vertical support forms a subcomplex of the de Rham cochain complex of $E$, and the cohomology of this complex is the {\em compact vertical cohomology} $H^*_{cv}(E)$ of $E$. 

Let $\xi:E\to M$ be a real vector bundle over a compact manifold $M$. Let $i:M\to E$ denote the zero section, let $E_0=E-i(M)$, and let $\xi_0:E_0\to M$ be the restriction. Let $S(E)$ be the quotient of $E_0$ by the action of $\R_{>0}$ on fibers. Let $D(E)$ be the fiberwise join of $S(E)$ and $i(M)$ . Then $\xi$ induces a sphere bundle  $S(\xi):S(E)\to M$ and a disk bundle $D(\xi):D(E)\to M$. 
If  $D(\xi)$ is triangulated  then simplicial cohomology $H^*(D(\xi), S(\xi),R)$ is canonically isomorphic to $H^*(E,E_0,R)$ for every ring $R$, and  $H^*(E, E_0,\R)$ is canonically isomorphic to $H^*_{cv}(E)$. In what follows we will identify $H^*(E,E_0,\R)$ with $H^*_{cv}(E)$.

\subsection{Cohomology with various coefficients}\label{app:coefficients}

The particular characteristic classes we will define in terms of de Rham cohomology are each the image of a class defined in integer chomology under the composition
\[ H^i(M,\mathbb Z)\to \Hom(H_i(M,\mathbb Z),\mathbb Z)\hookrightarrow\Hom(H_i(M,\mathbb Z),\mathbb Q)\cong H^i(M,\mathbb Q)
\hookrightarrow\Hom(H_i(M,\mathbb Z),\mathbb R)\cong H^i(M,\mathbb R)\]
Thus these classes exist in $H^i(M,\mathbb Q)$, and we call them {\em rational characteristic classes} (e.g.~ rational Pontrjagin classes). For definitions of the corresponding integer characteristic classes see, for instance, \cite{MS}.

Let $\pi:N\to M$ be a fiber bundle with orientation presheaf $\OO$ and $R$ a ring.  We denote by $H^*(M,\OO)$ the cohomology of $M$ with coefficients in $R$, twisted by the $\OO$. (If $\pi$ is orientable then  
$H^*(M,\OO)$ is canonically isomorphic to $H^*(M,R)$.) The same comments on integer vs.\ rational vs.\ real coefficients hold in the twisted setting, and this justifies some ambiguity in specifying the ring $R$. Most notably, in the context of de Rham cohomology we will define a twisted first Chern class $c_1(E_2,\OO)$ of a vector bundle, and in this setting the ring is $\R$. But because Chern classes are defined with twisted integer coefficients, this class will make sense as a twisted rational cohomology class.

\subsection{Cap product and Poincare duality}

If $[x_0,\ldots,x_{k+l}]$ is an oriented $(k+l)$-simplex and $\psi\in C^{k}(X,R)$ then we define $[x_0,\ldots,x_k]\frown\psi\in C_l(X,R)$ by 
$([x_0,\ldots,x_{k+l}]\frown\psi)=\psi([x_0, \ldots,x_k])[x_k,\ldots, x_l]$. This induces the {\em cap product} $\frown: H_{k+l}(X,R)\times H^k(X,R)\to H_l(X,R)$.
If $X$ is a simplicial complex whose underlying space is a compact orientable $n$-dimensional manifold $M$ then an orientation on $M$ determines an orientation on each maximal simplex of $X$. The sum of all of these oriented simplices, denoted $[M]$, generates $H_n(M,R)$, and Poincare Duality states that the map
$\mbox{PD}:H^{i}(M,R)\to  H_{n-i}(M,R)$ given by $PD(c)=[M]\frown c$ is an isomorphism.

An analog in de Rham chomology to the cap product is given by integration over a smooth submanifold.  Let $[N]$ denote the homology class determined by an oriented compact $(k+l)$-dimensional smooth submanifold $N$ of $M$. If $\alpha$ is a $k$-form on $M$ then we get an element $[N]\frown[\alpha]\in(H^l_{dR}(M))^*$ defined by
\[([N]\frown[\alpha])[\beta]=\int_N \alpha\wedge\beta\]
For homology classes which can be represented by formal sums of submanifolds, this cap product coincides with the simplicial cap product.
 If $M$ itself is  oriented  of dimension $n$,  then Poincare duality  for de Rham cohomology says that the map
\begin{align*}
 \mbox{PD} : H^k_{dR}(M) &\rightarrow (H^{n-k}_{dR}(M))^* \\ 
\mbox{PD}([\omega])([\tau]) &= \int_M \omega \wedge \tau 
\end{align*}
is an isomorphism. The Universal Coefficient Theorem  gives a canonical isomorphism from $(H^*_{dR}(M))^*$ to the (simplicial) homology of $M$, and so we identify $(H^*_{dR}(M))^*$ with $H_*(M)$.

Poincare duality generalizes to non-orientable manifolds if we twist coefficients. Let $X$ be a simplicial complex whose underlying space is a compact $n$-dimensional manifold $M$ with orientation presheaf $\DD$. The same chain-level definition as before gives a cap  product $H_{k+l}(M,\DD)\times H^k(M,R)\to H_l(M,\DD)$. In $H_n(M,\DD)$ we have a fundamental class $[M]$, and cap product with $[M]$ gives an isomorphism $PD: H^i(M,R)\to H_{n-i}(M,\DD)$.
\subsection{Integration over the fiber\label{app:transfer}}

Let $\pi:N\to M$ be a smooth orientable bundle with compact $k$-dimensional fiber $F$. We can integrate an $i$-form $\alpha$ on $E$ over each fiber to get a form on $M$, giving us the {\em transfer map} $\pi^!: H^*(E)\to H^*(M)$. If $i<k$ then $\pi^!([\alpha])=0$. Otherwise $\pi^!([\alpha])\in H_{dR}^{i-k}(M,\DD)$.

\begin{lemma} (Projection formula for fiber bundles)\label{lem:projection} Let $\pi:N\to M$ be a smooth fiber bundle with $M$ and $N$ 
compact. For every form $\alpha$ on $M$ and $\beta$ on $N$,
	\begin{enumerate}
		\item $\pi^![(\pi^*\alpha)\wedge\beta]=[\alpha]\smile\pi^![\beta]$
		\item $\pi_*[\int_N (\pi^*(\alpha)\wedge \beta)]=[\int_M\alpha]\smile \pi^![\beta]$
	\end{enumerate}
\end{lemma}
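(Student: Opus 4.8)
\textbf{Proof proposal for the projection formula (Lemma~\ref{lem:projection}).}

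The plan is to establish part (1) directly on the level of differential forms, using the local product structure of the bundle together with Fubini's theorem, and then to derive part (2) as a restatement of part (1) under Poincar\'e duality. First I would reduce to a local computation: choose a finite good cover $\{U_\beta\}$ of $M$ over which $\pi$ trivializes, with $\pi^{-1}(U_\beta)\cong U_\beta\times F$, and a subordinate partition of unity $\{\lambda_\beta\}$. Integration along the fiber is defined form-by-form by writing a form on $\pi^{-1}(U_\beta)$ in terms of the product coordinates and integrating out the $F$-coordinates (the pieces with fewer than $k=\dim F$ ``vertical'' differentials integrate to zero). Since $\pi^!$ is $\R$-linear and commutes with restriction to the $\pi^{-1}(U_\beta)$, and since $\beta=\sum_\beta (\pi^*\lambda_\beta)\,\beta$, it suffices to verify the identity for a form $\beta$ supported in a single trivializing chart.

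In that chart, write $\pi^*\alpha$ in product coordinates: it involves only the $M$-directions, so it has no vertical component. A general form $\beta$ decomposes as a sum of terms $f\cdot (\text{horizontal part})\wedge(\text{vertical part})$; wedging with $\pi^*\alpha$ on the left simply multiplies the horizontal part by $\alpha$ and leaves the vertical part untouched. Hence when we integrate $(\pi^*\alpha)\wedge\beta$ over the fiber $F$, the factor $\pi^*\alpha$ pulls out of the integral (it is constant along $F$), giving exactly $\alpha\wedge(\int_F\beta)$ pointwise on $U_\beta$. This is the chain-level identity $\pi^!((\pi^*\alpha)\wedge\beta)=\alpha\wedge\pi^!(\beta)$; passing to cohomology classes — which is legitimate because $\pi^!$ is a chain map up to the usual sign bookkeeping, commuting with $d$ by differentiating under the integral sign and applying Stokes' theorem on the closed fiber $F$ — yields part (1). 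I would be careful here with orientation conventions on the fiber (the statement implicitly uses the orientable case, so $\pi^![\beta]\in H^*_{dR}(M)$ rather than twisted cohomology) and with the Koszul sign that arises when commuting $d$ past the vertical differentials; these are the only places where an error could creep in, and they are routine once the conventions of Appendix~\ref{app:transfer} are fixed.

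For part (2), I would simply interpret both sides via the identification of $(H^*_{dR})^*$ with homology. By definition, for a form $\gamma$ on $M$,
\[
\Big(\pi_*\big[\textstyle\int_N (\pi^*\alpha\wedge\beta)\big]\Big)[\gamma]=\int_N \pi^*\alpha\wedge\beta\wedge\pi^*\gamma
= \int_M \alpha\wedge\pi^!(\beta)\wedge\gamma,
\]
where the second equality is part (1) applied to $\pi^*\gamma$ together with the fact that fiber integration of $\pi^*(\alpha\wedge\gamma)\wedge\beta$ is $(\alpha\wedge\gamma)\wedge\pi^!\beta$ and then Fubini/Poincar\'e duality on $M$. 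The right-hand side is precisely $\big([\int_M\alpha]\smile\pi^!([\beta])\big)[\gamma]$ evaluated via the de Rham cap/Poincar\'e pairing. Since this holds for all $\gamma$, part (2) follows. The main obstacle is not conceptual but bookkeeping: getting the signs in the fiber-integration chain map and the orientation of $N$ versus $M\times F$ consistent so that Stokes' theorem applies cleanly on the closed fiber; everything else is the standard Fubini argument.
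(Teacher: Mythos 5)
The paper never proves Lemma~\ref{lem:projection}: it is quoted as a standard fact (the appendix points the reader to~\cite{BT}), so there is no in-house argument to compare yours against. Your proposal is the standard Bott--Tu proof and is essentially correct: for (1), reduce by linearity and a partition of unity pulled back from the base to a form supported over a single trivializing chart, observe that $\pi^*\alpha$ has no vertical components and therefore factors out of the fiber integral, and note that integration over a closed fiber commutes with $d$, so the chain-level identity $\pi^!\bigl((\pi^*\alpha)\wedge\beta\bigr)=\alpha\wedge\pi^!\beta$ descends to cohomology; for (2), identify $(H^*_{dR})^*$ with homology so that the homological pushforward is dual to $\pi^*$, test both sides against a form $\gamma$ on $M$, and combine (1) with $\int_N=\int_M\circ\,\pi^!$. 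That is exactly how the lemma is used in the proof of Proposition~\ref{prop:starvscap}.

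Two pieces of bookkeeping deserve attention. First, a notational clash: you use $\beta$ both for the form and for the index of the cover. Second, and more substantively, your intermediate rewriting of $\pi^*\alpha\wedge\beta\wedge\pi^*\gamma$ as $\pi^*(\alpha\wedge\gamma)\wedge\beta$ silently commutes $\pi^*\gamma$ past $\beta$; after fiber integration you must commute $\gamma$ back past $\pi^!\beta$, and with the usual conventions the net sign is $(-1)^{k\deg\gamma}$, where $k$ is the fiber dimension (pulling a basic form out of $\pi^!$ on the right costs $(-1)^{k\deg\gamma}$, unlike pulling it out on the left, which is free). So the signless identity in (2) holds on the nose only when $k$ is even or the relevant degrees are even; this is harmless for the paper, whose fibers are $\Gr(2,\R^a)$ of even dimension $2(a-2)$ and whose forms are even-degree curvature expressions, but since you flag Koszul signs as the delicate point, this is the one place they genuinely appear. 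Also, the step you label ``Fubini/Poincar\'e duality on $M$'' needs only Fubini; no duality on $M$ is used there.
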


\begin{prop}\label{prop:starvscap} Let $\pi:N\to M$ be a smooth fiber bundle with $N$ and $M$ 
compact. For any $[\alpha]\in H^*_{dR}(N)$
	\[ \pi_*([\alpha]\frown [N])=\pi^![\alpha]\frown[M].\]
\end{prop}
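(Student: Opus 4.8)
The statement compares the pushforward in homology (the Umkehr/Gysin map $\pi_*$ composed with cap with the fundamental class $[N]$) with the transfer map $\pi^!$ in de Rham cohomology followed by cap with $[M]$. The natural strategy is to verify the identity by pairing both sides against an arbitrary class $[\beta]\in H^{*}_{dR}(M)$ and invoking Poincar\'e duality on $M$ (in the twisted form, since $M$ need not be orientable), together with the projection formula of Lemma~\ref{lem:projection}. Concretely: both $\pi_*([\alpha]\frown[N])$ and $\pi^![\alpha]\frown[M]$ are elements of $H_*(M,\DD)$, and since $M$ is a compact manifold, Poincar\'e duality tells us a homology class is determined by its pairing with all of $H^*_{dR}(M)$. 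So it suffices to show these two classes pair equally with every $[\beta]$.

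\textbf{Key steps.} First, I would unwind the left-hand side. By definition of the de Rham cap product, for $[\beta]\in H^{l}_{dR}(M)$ we have
\[
\bigl(\pi_*([\alpha]\frown[N])\bigr)\smile[\beta]
\;\text{evaluated suitably}\;=\;([\alpha]\frown[N])\smile\pi^*[\beta]
\;=\;\int_N \alpha\wedge\pi^*\beta,
\]
where the first equality is the adjunction $\langle \pi_* c,\beta\rangle=\langle c,\pi^*\beta\rangle$ for the pushforward on homology against pullback on cohomology, and the second is the definition of $\frown[N]$ as integration over $N$. Second, I would unwind the right-hand side: pairing $\pi^![\alpha]\frown[M]$ against $[\beta]$ gives $\int_M \beta\wedge\pi^!\alpha$ (up to a sign from commuting forms, which I will track), and by part (1) of Lemma~\ref{lem:projection}, $[\beta]\smile\pi^![\alpha]=\pi^!\bigl[(\pi^*\beta)\wedge\alpha\bigr]$. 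Then integrating the transfer over $M$ — i.e.\ applying $\int_M$ to a form obtained by integration over the fiber — recovers $\int_N \pi^*\beta\wedge\alpha$ by Fubini for fiber bundles (the oriented-bundle case); this is essentially part (2) of Lemma~\ref{lem:projection} with $\alpha$ there taken to be the constant $1$, or a direct iterated-integral argument. Matching the two expressions $\int_N\alpha\wedge\pi^*\beta$ and $\int_N\pi^*\beta\wedge\alpha$ — which agree up to a sign $(-1)^{|\alpha||\beta|}$ — and checking that the same sign discrepancy appears on both sides from the graded-commutativity of $\smile$ completes the argument. Finally I would note the twisted-coefficient bookkeeping: since $\pi$ is orientable, the orientation presheaf of $N$ is $\pi^*\DD$, so $[N]\in H_*(N,\pi^*\DD)$ and everything lands consistently in $H_*(M,\DD)$, and the twisted Poincar\'e duality $PD:H^i(M,R)\to H_{n-i}(M,\DD)$ of the appendix is exactly what lets us conclude equality of the two homology classes from equality of all pairings.

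\textbf{Main obstacle.} The calculation is essentially formal once the right ingredients are assembled; the genuine subtlety is sign bookkeeping and the interplay between the two ``cap products'' in play — the simplicial one used to define $\frown[M]$ and $\frown[N]$, and the de Rham ``integration'' version — which the appendix has identified but which must be used consistently. I expect the trickiest point to be justifying the step $\int_M \pi^![(\pi^*\beta)\wedge\alpha] = \int_N (\pi^*\beta)\wedge\alpha$, i.e.\ that ``integrate over the fiber, then over the base'' equals ``integrate over the total space''; this is Fubini for oriented fiber bundles and is exactly the content packaged in Lemma~\ref{lem:projection}(2), so I would cite that rather than re-prove it, being careful that the orientation conventions defining $\pi^!$ and the fundamental classes $[M]$, $[N]$ are mutually compatible (so that no spurious sign is introduced). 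Everything else — the adjunction between $\pi_*$ and $\pi^*$, and graded-commutativity of the cup product — is standard and needs only to be invoked.
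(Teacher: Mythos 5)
Your proposal is correct and follows essentially the same route as the paper: both reduce the identity to equality of pairings against an arbitrary cohomology class on $M$ (the paper caps with $[\tau]$, you pair with $[\beta]$, which is the same move), push the pairing through naturality/adjunction of $\pi_*$ versus $\pi^*$, identify the result with an integral over $N$, and close the loop with the projection formula of Lemma~\ref{lem:projection} and Poincar\'e duality on $M$. Your explicit attention to signs and to the Fubini step (the paper invokes Lemma~\ref{lem:projection}(2) directly where you use part (1) plus integration over $M$) is a harmless repackaging of the same ingredients.
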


Proposition~\ref{prop:starvscap} gives us an alternate description of the transfer map in terms of
 Poincare Duality: \[\pi^!=PD^{-1}_M\pi_*PD_N\]

\begin{proof} Let $\alpha$ be a $(k+f)$-form, so that both sides of the equation in the proposition are in $H_{n-k}(M,\DD)$. For each $[\tau]\in H^{n-k}(M,\DD)$,
	\begin{eqnarray*}
		[\tau]\frown( \pi_*([\alpha]\frown [N])&=\pi_*(\pi^*[\tau]\frown([\alpha]\frown[N]))&\mbox{by naturality of cap product}\\
		&=\pi_*((\pi^*[\tau]\smile[\alpha])\frown[N])&\\
		&=\pi_*(\int_N\pi^*(\tau)\wedge\alpha)&\\
		&=[\int_M\tau]\smile\pi^![\alpha]&\mbox{by the Projection Formula}\\
		&=(\tau\smile\pi^![\alpha])\frown[M]&\\
		&=\tau\frown(\pi^![\alpha]\frown[M])
	\end{eqnarray*}
\end{proof}

The {\em (rational) Thom class} of $E$ is the unique $\tau\in H^*_{cv}(E)$ such that the restriction of $\tau$ to a fiber is the preferred generator. The {\em Thom isomorphism} $\Phi:H^k_{dR}(B)\to H^{k+r}_{cv}(E)$ sends each $\alpha$ to $\xi^*(\alpha)\smile\tau$. In simplicial cohomology we can define the Thom class
$\tau\in H^r(E, E_0,R)$ and the Thom isomorphism $\Phi:H^k(B,R)\to H^{k+r}(E, E_0,R)$ in the same way.

\begin{prop} For a real vector bundle $\xi:E\to B$ the transfer map is the inverse of the Thom isomorphism.
\end{prop}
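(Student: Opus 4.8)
The plan is to show that $\xi^!\circ\Phi$ is the identity on $H^*_{dR}(B)$; since $\Phi$ is an isomorphism this forces $\xi^!=\Phi^{-1}$, giving the proposition. Here $\xi^!\colon H^{k+r}_{cv}(E)\to H^k_{dR}(B)$ is integration over the fibers $\xi^{-1}(b)\cong\R^r$, applied to forms of compact vertical support; under the identifications $H^*(E,E_0,\R)\cong H^*_{cv}(E)$ and $H^*(D(\xi),S(\xi),\R)\cong H^*(E,E_0,\R)$ recorded at the start of this appendix, this is the transfer in the simplicial picture, so it suffices to argue in de Rham cohomology.

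First I would establish the projection formula in the compact-vertical-support setting: for a form $\alpha$ on $B$ and a form $\beta$ on $E$ of compact vertical support, $\xi^*\alpha\wedge\beta$ again has compact vertical support and
\[\xi^!\bigl(\xi^*\alpha\wedge\beta\bigr)=\alpha\wedge\xi^!(\beta).\]
In local coordinates $(b,x)\in U\times\R^r$ this is immediate: integrating out the fiber variables $x$ pulls the $db$-factors of $\xi^*\alpha$ through the integral exactly as in the proof of Lemma~\ref{lem:projection}, and compactness in the fiber direction lets Stokes' theorem apply with no boundary term, so the identity descends to cohomology as $\xi^!(\xi^*[\alpha]\smile[\beta])=[\alpha]\smile\xi^![\beta]$. (Alternatively one cites the compactly-supported projection formula of~\cite{BT}.)

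Next, applying this with $[\beta]=\tau$ the Thom class, for any $[\alpha]\in H^k_{dR}(B)$ we get
\[\xi^!\bigl(\Phi([\alpha])\bigr)=\xi^!\bigl(\xi^*[\alpha]\smile\tau\bigr)=[\alpha]\smile\xi^!(\tau),\]
so it remains to compute $\xi^!(\tau)\in H^0_{dR}(B)$. The class $\xi^!(\tau)$ is represented by the smooth function $b\mapsto\int_{\xi^{-1}(b)}\tau|_{\xi^{-1}(b)}$ on $B$. By the defining property of the Thom class, $\tau|_{\xi^{-1}(b)}$ is the preferred generator of $H^r_c(\xi^{-1}(b))\cong H^r_c(\R^r)$, namely the one whose fiber integral is $1$; hence this function is constantly $1$, so $\xi^!(\tau)=1$ and $\xi^!(\Phi([\alpha]))=[\alpha]$, as desired.

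Finally I would note that the argument is insensitive to orientability: when $\xi$ is non-orientable, the Thom class, the projection formula, and the normalization $\xi^!(\tau)=1$ are all read with coefficients twisted by the orientation presheaf $\OO$ of $\xi$, and the same computation gives $\xi^!\circ\Phi=\mathrm{id}$ on $H^*(B,\OO)$; the simplicial statement follows via the identifications above. The one step I expect to require real care — and which I would write out in full — is the compact-vertical-support projection formula, since the version proved in the body (Lemma~\ref{lem:projection}) assumes a compact total space; in the vector-bundle case it is fiberwise compactness of $\supp(\beta)$, not compactness of $E$, that supplies the hypothesis needed for Stokes' theorem.
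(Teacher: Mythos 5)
Your proposal is correct and follows essentially the same argument as the paper: show $\xi^!\circ\Phi=\mathrm{id}$ by applying the projection formula to $\xi^*[\alpha]\smile\tau$ and then using $\xi^!(\tau)=1$ from the defining property of the Thom class. Your added observation that the projection formula must be used in its compact-vertical-support form (since $E$ is not compact, unlike the hypothesis of Lemma~\ref{lem:projection}) is a reasonable point of care, but it does not change the route, which coincides with the paper's.
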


\begin{proof} Let $\alpha\in H^*(B,R)$. Then 
\begin{align*}
\xi^!(\Phi(\alpha))&=\xi^!(\xi^*(\alpha)\smile\tau)\\
&=\alpha\smile\xi^!(\tau)&\mbox{by the Projection Formula}\\
&=\alpha\smile 1\\
&=\alpha
\end{align*}
\end{proof}

\subsection{The Euler class\label{app:euler}}
 The real {\em  Euler class} of an oriented vector bundle $\xi:E\to B$ is defined to be the image of the Thom class under $i^*:H^*_{cv}(E)\to H_{dR}^*(M)$.

 When $M$ is a smooth oriented manifold, the  Euler class of the tangent bundle has a beautiful geometric interpretation (cf. Proposition 12.8 in~\cite{BT}): for any section $s$ of $\xi$ which intersects the zero section  transversely, the  Euler class is the Poincare dual to the homology class given by the zero locus of  $s$ (once appropriate orientation is given to this zero locus). This comes out particularly simply when the dimension of the base space is also $r$, since then the zero locus is a discrete set of points. For each such point $p$, transversality of $s$ and the zero section $z$ tells us that $T_{s(p)}(E)=s_*(T_p(M))\oplus z_*(T_p(M))$. Define $c(p)$ to be 1 if the orientations of the two sides of this equation  agree and $-1$ if they disagree. Then the Poincare dual of  $e(E)$ is
\[e(E)\frown [M]=\sum_pc(p)\]
where the sum is over all points in the zero locus of $s$.

\section{The Chern class with twisted coefficients\label{app:ss}}

For an oriented circle bundle $\rho: \mathcal{Z} \rightarrow \mathcal{Y}$, the Chern class is the obstruction to obtaining a closed global 1-cocycle in $H^1(\mathcal{Z})$ which restricts to a generator of the cohomology in each fiber. A circle bundle that is not necessarily oriented has a twisted Chern class, which lives in cohomology with coefficients in the orientation presheaf. This class can be defined in terms of the Leray-Serre spectral sequence, and this definition can be extended to other maps with similarly-behaved Leray-Serre spectral sequences, not just bundles. In this section we'll lay out a setup that admits this generalization.
Our discussion is adapted from~\cite{BT}.

\begin{defn} A {\em quasifibration} is a surjective map $f: E\to B$ such that, for each $b\in B$ and $e\in f^{-1}(b)$, each induced map $ f_*:\pi_i(E,  f^{-1}(b), e)\to\pi_i(B,b)$ is a bijection.
\end{defn}

It follows that a  quasifibration induces a long exact sequence
\[\cdots\to\pi_{i+1}(B,b)\to\pi_i( f^{-1}(b), e)\to \pi_i(E,e)\to\pi_i(B,b)\cdots\]
A fibrewise map between quasifibrations induces a morphism of long exact sequences.

Every continuous map has an associated fibration (cf.\ \cite{Quillen}, Section 4.3), whose fiber we call the homotopy fiber of the map. If $ f$ is a quasifibration, then each fiber of $ f$ has the same weak homotopy type as the homotopy fiber of $ f$.

We'll be interested in a simplicial quasifibration in which the fiber over each vertex is a simplicial $S^1$ (and therefore the fiber over every point is a homotopy $S^1$).

\begin{prop}\label{prop:QFib}
	Let  $E$ and $B$ be connected simplicial complexes and $f : E \rightarrow B$ be a simplicial  quasifibration with  connected homotopy  fiber $F$. Let $\U$ be the cover of $B$ by open stars, and let $\PPP$ be a presheaf of modules that is locally constant with respect to $\U$.
 Then for each $q$  the presheaf 
 \[\H^q(U)=H^q(f^{-1}U, \PPP(U))\]
  is locally constant.

\end{prop}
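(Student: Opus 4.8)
\textbf{Proof proposal for Proposition~\ref{prop:QFib}.}

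The plan is to verify local constancy directly on the cover $\U$ by open stars: I will show that for each vertex $v$ of $B$, with open star $\mathrm{St}(v)$, and each point $b\in\mathrm{St}(v)$, the restriction map $\H^q(\mathrm{St}(v))\to\H^q(W)$ is an isomorphism for all sufficiently small open $W\ni b$, and conversely that $\H^q$ is ``constant'' across $\mathrm{St}(v)$. Since local constancy of a presheaf with respect to an open cover means precisely that on each set of the cover (and on intersections of such sets, which are again unions of open stars in a subdivided sense, handled by the same argument) the restriction maps are isomorphisms, it suffices to analyze $f^{-1}(\mathrm{St}(v))$ and its relation to $f^{-1}(b)$ for $b$ in the star. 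The key geometric input is that an open star $\mathrm{St}(v)$ deformation retracts onto $v$ by the straight-line homotopy, and $\PPP$ is constant on $\mathrm{St}(v)$ by hypothesis, so $\PPP(\mathrm{St}(v))\cong\PPP(v)=:P$.

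The central step is to promote this base-level retraction to a statement about the preimage. First I would observe that, because $f$ is simplicial and $\mathrm{St}(v)$ is a union of open simplices, $f^{-1}(\mathrm{St}(v))$ is an open subcomplex-neighborhood in $E$, and the restriction $f\colon f^{-1}(\mathrm{St}(v))\to\mathrm{St}(v)$ is again a quasifibration (restriction of a quasifibration to the preimage of an open set is a quasifibration — this is immediate from the definition via the long exact sequences, since $\pi_i$ is computed locally). Now $\mathrm{St}(v)$ is contractible, so from the long exact sequence of the quasifibration $f^{-1}(\mathrm{St}(v))\to\mathrm{St}(v)$ the inclusion of any fiber $f^{-1}(b)$, $b\in\mathrm{St}(v)$ — in particular $f^{-1}(v)=F$ — into $f^{-1}(\mathrm{St}(v))$ induces isomorphisms on all homotopy groups, i.e.\ is a weak homotopy equivalence. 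Consequently $H^*(f^{-1}(\mathrm{St}(v)),P)\cong H^*(F,P)\cong H^*(f^{-1}(b),P)$ for every $b\in\mathrm{St}(v)$, and one checks that these isomorphisms are induced by the restriction maps of the presheaf $\H^q$, which gives exactly the local constancy of $\H^q$ with respect to $\U$. For intersections $\mathrm{St}(v)\cap\mathrm{St}(w)$ appearing in the \v Cech/local-constancy bookkeeping, the same argument applies: such an intersection is again a union of open stars in the barycentric subdivision, still contractible when nonempty, and the preimage argument runs identically.

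The main obstacle I anticipate is the passage from weak homotopy equivalence to cohomology isomorphism \emph{with the twisted coefficients $\PPP(U)$}, and making sure the identification is compatible with the presheaf restriction maps rather than just abstractly. Since $\PPP$ is locally constant with respect to $\U$, on each $U$ in the relevant cover it is genuinely a constant coefficient module $P$, so ordinary cohomology with constant coefficients $P$ applies and weak equivalences do induce isomorphisms — so the only real care needed is naturality: one must check that for $W\subseteq\mathrm{St}(v)$ the square comparing $H^q(f^{-1}\mathrm{St}(v),P)\to H^q(f^{-1}W,P)$ with the fiber inclusions commutes, which follows because all the maps in sight are induced by inclusions of subcomplexes and the deformation retractions can be chosen compatibly (fiberwise over the straight-line retraction of the base). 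A secondary technical point is that $F$ need not be compact or finite, so one should phrase the quasifibration long exact sequence and the weak-equivalence conclusion using $\pi_*$ and then invoke the theorem that a weak homotopy equivalence induces an isomorphism on singular cohomology with arbitrary (constant) coefficients; this avoids any finiteness hypothesis and keeps the argument clean.
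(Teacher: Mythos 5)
Your overall architecture matches the paper's (reduce to showing that, over the open star of a vertex $v$, each fiber inclusion $f^{-1}(b)\hookrightarrow f^{-1}(U_{\{v\}})$ is a weak equivalence, then pass to cohomology with the locally constant coefficients, which are genuinely constant on each star). But the step you lean on to get that weak equivalence is a genuine gap: you assert that the restriction of a quasifibration to the preimage of an open subset of the base is again a quasifibration, ``immediate from the definition via the long exact sequences, since $\pi_i$ is computed locally.'' Homotopy groups are not computed locally, and this claim is false in general: being a quasifibration is not preserved under restriction (equivalently, pullback along an open inclusion). This is precisely why Dold--Thom must single out ``distinguished'' open sets --- those over which the restriction \emph{is} a quasifibration --- as a nontrivial hypothesis in their local-to-global criterion. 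The relative groups $\pi_i(f^{-1}U, f^{-1}(b))$ are controlled by the homotopy type of $f^{-1}U$, which can differ wildly from what the global quasifibration condition (which only compares $\pi_i(E,f^{-1}(b))$ with $\pi_i(B,b)$) knows about. In the present setting the statement you want is true, but it is essentially the content of the proposition, so assuming it begs the question.

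The paper proves exactly this point, and the proof uses the simpliciality of $f$, which your argument never invokes in an essential way. Because $f$ is simplicial, both $U_{\{v\}}$ and $f^{-1}(U_{\{v\}})$ deformation retract (by straight-line retractions) onto $v$ and $f^{-1}(v)$ respectively; hence the inclusions $(E,f^{-1}(b))\hookrightarrow (E,f^{-1}U_{\{v\}})$ and $(B,\{b\})\hookrightarrow (B,U_{\{v\}})$ induce isomorphisms on relative homotopy. Combining this with the global quasifibration property (which says $\pi_i(E,f^{-1}(b))\to\pi_i(B,b)$ is an isomorphism) shows $\pi_i(E,f^{-1}U_{\{v\}})\to\pi_i(B,U_{\{v\}})$ is an isomorphism, and then the Five Lemma applied to the long exact sequences of the triples $(E,f^{-1}U_{\{v\}},f^{-1}(b))$ and $(B,U_{\{v\}},b)$ gives $\pi_i(f^{-1}U_{\{v\}},f^{-1}(b))\cong\pi_i(U_{\{v\}},b)=0$, i.e.\ the fiber inclusion is a weak equivalence. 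If you replace your ``restriction is a quasifibration'' step by this argument (and then invoke Whitehead with twisted coefficients, or your constant-coefficient version on each star, taking care that the identifications are induced by the inclusions $f^{-1}U_\sigma\hookrightarrow f^{-1}U_\tau$ for $\tau\subset\sigma$), your proof becomes the paper's.
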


\begin{proof}
Let $\tau\subset\sigma$ be simplices of $B$. 
By Whitehead's Theorem with twisted coefficients (\cite{QuillenHA}, Proposition 4, Chapter 2, Section 3), it's enough to show that the inclusion $f^{-1} U_\sigma\hookrightarrow f^{-1} U_\tau$ induces isomorphisms in homotopy groups. We have some immediate observations:
\begin{enumerate}
\item Let $v\in\tau$. Then the inclusions

\begin{tikzcd}
f^{-1}U_\sigma\arrow[rd, hook]\arrow[r, hook]&f^{-1}U_\tau\arrow[d, hook]\\
&f^{-1}U_{\{v\}}
\end{tikzcd}

\noindent commute, so it's enough to consider the case when $\tau=\{v\}$.

\item Let $b$ be a point in the interior of $\sigma$. Then the inclusions

\begin{tikzcd}
f^{-1}b\arrow[rd, hook]\arrow[r, hook]&f^{-1}U_\sigma\arrow[d, hook]\\
&f^{-1}U_{\tau}
\end{tikzcd}

\noindent commute, so it's enough to show that the inclusion $f^{-1}b\hookrightarrow f^{-1}U_\tau$ induces isomorphisms in homotopy groups.

\item $f^{-1}U_{\{v\}}$ retracts to $f^{-1}v$ and $U_{\{v\}}$  retracts to $v$, by straight-line retractions. Thus from the long exact sequence of the triples $(E,f^{-1}U_{\{v\}},f^{-1}v)$ and $(B, U_{\{v\}},v)$ we have that the inclusions $(E,f^{-1} b)\to (E,f^{-1}U_{\{v\}})$ and $(B,\{b\})\to (B,U_{\{v\}})$ induce isomorphisms in homotopy groups.

\end{enumerate}

Consider the commutative diagram

\begin{tikzcd}
(E,f^{-1} b)\arrow[d]\arrow[r, hook]&(E,f^{-1}U_{\{v\}})\arrow[d]\\
(B,\{b\})\arrow[r, hook]&(B,U_{\{v\}})
\end{tikzcd}

	 By our third observation, the horizontal maps induce isomorphisms in homotopy groups, and because $f$ is a quasifibration the first vertical map induces  isomorphisms in homotopy groups as well. Therefore the same holds for the second vertical map.

	 Now consider the long exact sequences in homotopy of the triples $(E, f^{-1}U_{v}, f^{-1}b)$  and $(B,U_{\{v\}},b)$.

	 \begin{tikzcd}
	 \cdots\arrow[r] &\pi_{i+1}(E, f^{-1}b)\arrow{r}\arrow{d}{\cong} & \pi_{i+1}(E, f^{-1}U_{\{v\}})\arrow{r} \arrow{d}{\cong} & \pi_{i}(f^{-1}U_{\{v\}}, f^{-1}b )\arrow{d}\arrow[r]&\cdots\\  
	\cdots \arrow[r]& \pi_{i+1}(B, b)\arrow{r}& \pi_{i+1}(B,U_{\{v\}})\arrow{r}& \pi_{i}(U_{\{v\}}, b)\arrow[r]	&  \cdots
	 \end{tikzcd}

\noindent The first vertical map is an isomorphism because $f$ is a quasifibration, and we have shown the second vertical map to be an isomorphism as well. Thus by the Five Lemma the third vertical map is an isomorphism. But $\pi_{i}(U_{v}, b)=0$, so we conclude that $ \pi_{i}(f^{-1}U_{\{v\}}, f^{-1}b )=0$. Thus the inclusion $f^{-1}b\hookrightarrow f^{-1}U_{\{v\}}$ induces isomorphisms of homotopy groups
\end{proof}

From Proposition~\ref{prop:QFib} we see that for a simplicial $S^1$ quasifibration the presheaf 
$\OO(U):=H^1(f^{-1}U)$ is locally constant with respect to the cover by open stars. $\OO$ plays the same role that the orientation presheaf plays for an $S^1$ fiber bundle. 

The remainder of our discussion, except where noted, holds for any $S^1$ quasifibration $f:E\to B$ equipped with a good cover $\U$ of the base space for which the presheaf $\OO$ defined above is locally constant. 

For each $U\in\U$ let $\H^q(U)=H^q(f^{-1}U,f^*\OO)$. The double complex $K^{p,q}=C^p(f^{-1}\U, \H^q)$ defines a spectral sequence converging to $H^*(E)$. $K^{0,1}$ has a canonical element $\sigma^{0,1}$, generating the cohomology of each fiber. We can describe $\sigma^{0,1}$ explicitly for our two cases of interest.
\begin{enumerate}
\item Let $f:E\to B$ be the circle bundle associated to a smooth rank 2 vector bundle. Then by choosing local coordinates appropriately we can express the connection of the rank 2 bundle locally by a matrix
\[\begin{bmatrix}
0&\omega_{12}\\
-\omega_{12}&0
\end{bmatrix}\]
and $\sigma^{0,1}(U)=[\frac{1}{2\pi}\omega_{12}]$.
\item Let $f:E\to B$ be a simplicial quasifibration such that the fiber over each vertex is $S^1$.
A choice of orientation over a closed star $\overline{U}$ in $B$ determines a 1-cocycle $\Theta$ in $f^{-1}(\overline{U})$ as follows.
For each vertex $v$,  the fiber $f^{-1}(v)$ 
is an $m$-gon for some $m$, and $\Theta$ is given on every $\Theta(\tau)=\frac{1}{m}$ for each positively oriented edge $\tau$ in this fiber. Having fixed this, for each edge $e$ of $B$, $\Theta|f^{-1}(e)$ is the cocycle such that the sum of the squares of the coefficients is minimum. It is rational, since the problem of minimizing a quadratic expression subject to linear constraints  can be solved by linear equations. 
Then for each open star $U$, $\sigma^{0,1}(U)$ is given by the restriction of the cocycle $\Theta$ on $f^{-1}(\overline{U})$.
\end{enumerate}

Because $\H^*$ is locally constant, 
 $\delta\sigma^{0,1}\in K^{1,1}$ is exact, and so 
$\delta \sigma^{0,1} = d\sigma^{1,0}$ for some $\sigma^{1,0}$ in $C^1(f^{-1}\mathcal{U}, \H^{0})$.
Now consider $\delta\sigma^{1,0}\in C^2(f^{-1}\mathcal{U}, \H^0)$.
As $d(\delta \sigma^{1,0}) = \delta(d\sigma^{1,0}) = \delta(\delta \sigma^{0,1}) = 0$, we have that $\delta\sigma^{1,0}$ survives to give a class $-c_1(E,\OO)\in E_2^{2,0}=H^2(B,\OO)$. This class is the image of the $d_2$ differential of $[\sigma^{0,1}]$.
%
%
One easily checks that $\sigma^{0,1}$ extends to a cocycle of $H^*(E,f^*\OO )$ if and only if $c_1(E,\OO)=0$.

\begin{defn} The class $c_1(E,\OO)$ in $H^2(B,\OO)$ is the {\em Chern class} of $f$ (with twisted coefficients). 
\end{defn}

When $f:E\to B$ is the circle bundle associated to a smooth rank 2 vector bundle with curvature given locally by $\begin{bmatrix} 0&\Omega_{12}\\-\Omega_{12}&0\end{bmatrix}$ then $c_1(E,\OO)=[\frac{1}{2\pi}\Omega_{12}]$.

\section{Oriented matroids\label{app:OM}}

An {\em oriented matroid} is a kind of combinatorial data that can be expressed in several equivalent (``cryptomorphic") ways. All of these ways involve the set $\{0,+,-\}$, which is viewed as a poset with unique minimum 0 and maxima $+$ and $-$. For any set $S$, the set  $\{0,+,-\}^S$ is ordered componentwise. Commutative multiplication on $\{0,+,-\}$ is defined in the usual way: $0\cdot x=0$ for all $x$, $+\cdot +=-\cdot-=+$, $+\cdot -=-$. If $\vv=(v_e:e\in E)\in\R^E$ then $\sign(\vv)$ denotes $(\sign(v_e):e\in E)\in\{0,+,-\}^E$.
 
\subsection{Motivation: vector arrangements and subspaces\label{sec:realizOMs}}

Let $(\vv_e:e\in E)$ be an arrangement of column vectors in a rank $r$ vector space $V$ over $\R$ which spans $V$. By choosing coordinates in $V$ and an ordering of $E$ 
we can view this arrangement as an $r\times |E|$ matrix $M$ with columns indexed by $E$. 

Consider the following two ways to extract combinatorial data from $M$.
\begin{enumerate}
\item Let $\chi:E^r\to\{0,+,-\}$ be the function taking each $(e_1,\ldots, e_r)\in E^r$ to the sign of the determinant of the matrix $(\vv_{e_1},\ldots,\vv_{e_r})$. The function $\chi$ is called the {\em chirotope} associated to $M$.  

We are interested in combinatorial data that is independent of the choice of coordinates. If $A\in GL_r$ and $\chi'$ is the chirotope associated to $AM$ then $\chi'=\pm\chi$. So our  interest is in the pair $\{\chi,-\chi\}$. 

\item For each vector $\w=(w_e:e\in E) \in\R^E$, let $\sign(\w)=(\sign(w_e):e\in E)\in\{0,+,-\}^E$. Let $\V^*=\{\sign(\w):\w\in\row(M)\}$. $\V^*$ is the {\em covector set} associated to $M$.

$\V^*$ is coordinate-independent: for any $A\in GL_r$, $M$ and $AM$ have the same associated covector set. 

\end{enumerate}

It is not  hard to see that the pair $\pm\chi$ determines the set $\V^*$ and vice-versa. The {\em oriented matroid} associated to the vector arrangement is the combinatorial information encoded by either $\pm\chi$ or $\V^*$.

Every rank $r$ subspace $V$ of $\R^E$ is the row space of some rank $r$ matrix $M\in\R^{r\times E}$. Viewing $M$ as a vector arrangement, we see that the covector set of the oriented matroid associated to $M$ is $\{\sign(\vv):\vv\in V\}$. Thus it makes sense to also call this the oriented matroid  corresponding to $V$. The set of full-rank matrices with row space $V$ is $GL_rM$; as vector arrangements, the elements of this set are the images of the vector arrangement $M$ under invertible linear transformations.

There are many geometric interpretations of the data encoded by an oriented matroid: we list a few.
\begin{itemize}
\item The pair $\{\chi,-\chi\}$ of chirotopes associated to a vector arrangement tells us
\begin{itemize}
	\item which $(r-1)$-tuples from the arrangement span hyperplanes, and
	\item for each $(r-1)$-tuple spanning a hyperplane $H$, which of the remaining vectors are in $H$ and which pairs among the remaining vectors are in the same open half-space bounded by $H$.
\end{itemize}
\item The covector set associated to $M=(\vv_e:e\in E)$ is $\{\sign(\y M):\y\in\R^r\}$, and the component of $\sign(\y M)$ corresponding to $e\in E$ says whether the vector $\vv_e$ lies on the same side of the hyperplane $\y^\perp$ as $\y$, on the opposite side as $\y$, or on $\y^\perp$.
\item The chirotopes $\pm\chi$  associated to a subspace of $\R^E$ is the pair of signs of Pl\"ucker coordinates for that subspace.
\item The covector set associated to a subspace of $\R^E$ tells us which orthants of $\R^E$ have nonempty intersection with that subspace.
\end{itemize}

\subsection{Formal definitions}
\begin{defn} Let $E$ be a finite set and $r$ a positive integer. A {\em rank $r$ chirotope on elements $E$} is a nonzero alternating function $\chi:E^r\to\{0,+,-\}$ satisfying the following {\em combinatorial Grassmann-Pl\"ucker relations}: for each $e_2, e_3,\ldots, e_r, f_0, f_1, \ldots, f_r\in E$, the set
\[\{(-1)^i\chi(f_i, e_2, \ldots, e_r)\chi(f_0, \ldots, \hat f_i,\ldots, f_r):i\in\{0,\ldots,r\}\}\]
 either is $\{0\}$ or contains both $+$ and $-$. 
\end{defn}

\begin{nota} Let $\x,\y\in\{0,+,-\}^E$. The {\em composition} $\x\circ \y$ is defined to be the element of $\{0,+,-\}^E$ with 
\[\x\circ \y(e)=\begin{cases} 
\x(e)&\mbox{ if $\x(e)\neq 0$}\\
\y(e)&\mbox{ otherwise}
\end{cases}\]
\end{nota}

\begin{defn} Let $E$ be a finite set and $\V^*\subseteq\{0,+,-\}^E$. $\V^*$ is the {\em covector set} of an oriented matroid on elements $E$ if it satisfies all of the following.
\begin{enumerate}
\item $\mathbf 0\in\V^*$.
\item If $\x\in\V^*$ then $-\x\in\V^*$.
\item If $\x,\y\in\V^*$ then $\x\circ \y\in \V^*$.
\item If $\x,\y\in\V^*$, $\x\neq -\y$, and $e\in E$ such that $\x(e)=-\y(e)\neq 0$, then there 
is a $Z\in\V^*$ such that $Z(e)=0$ and, for each $f\in E$,
\begin{itemize}
\item if $\x(f)=\y(f)=0$ then $Z(f)=0$,
\item If $+\in\{\x(f),\y(f)\}\subseteq\{0,+\}$ then $Z(f)=+$, and 
\item If $-\in\{\x(f),\y(f)\}\subseteq\{0,-\}$ then $Z(f)=-$.
\end{itemize}
\end{enumerate}
If $\V^*$ is the  covector set of an oriented matroid, then the {\em rank} of the oriented matroid is the rank of $\V^*$ as a subposet of $\{0,+,-\}^E$.
\end{defn}

A rank $r$ chirotope $\chi\in\{0,+,-\}^{E^r}$ determines a set $\V^*\subseteq\{0,+,-\}^E$ that is the covector set of a rank $r$ oriented matroid. If $\chi$ is a chirotope then $-\chi$ is also a chirotope, and $-\chi$ determines the same set $\V^*$. Conversely, each covector set arises from a unique pair $\pm\chi$ of chirotopes. The correspondence between chirotopes and covector sets is not simple to describe. The information encoded by either a pair $\{\chi,-\chi\}$ of chirotopes or the corresponding covector set is called an oriented matroid.

The motivating examples of Section~\ref{sec:realizOMs} are easily seen to satisfy these definitions. However, there are oriented matroids which are not {\em realizable} as vector arrangements.

\subsection{Some definitions related to oriented matroids}

Many definitions one can associate to vector arrangements generalize to oriented matroids.

\begin{defn} Consider an oriented matroid $\M$ with covector set $\V^*\subseteq\{0,+,-\}^E$ and chirotope $\chi:E^r\to\{0,+,-\}$.
\begin{enumerate}
\item A {\em loop} of $\M$ is an $e\in E$ such that $\x(e)=0$ for each $\x\in\V^*$. 
\item Let $A\cup\{e\}\subseteq E$. We say $e$ is in the {\em convex hull } of $A$ if $\x(e)=+$ for each $\x\in\V^*$ such that $\x(a)=+$ for all $a\in A$.
\item $I\subseteq E$ is {\em independent} if $\chi(e_1,\ldots,x_e)\neq 0$ for some $r$-element superset $\{e_1,\ldots,e_r\}$ of $I$.
\item The {\em rank} of $A\subseteq E$ is the largest size of an independent subset of $A$. 
\end{enumerate}
\end{defn}
If $\M$ arises from a vector arrangement, then the latter three definitions correspond to their usual meaning, while a loop corresponds to a 0 vector. (The name {\em loop} derives from the graph-theoretic interpretation of matroid theory.)

\begin{defn} Let $\M$ and $\N$ be oriented matroids with covector sets $\V^*(\M)\subseteq\{0,+,-\}^E$ and $\V^*(\N)\subseteq\{0,+,-\}^E$. 
\begin{enumerate}
\item We say there is a {\em weak map} from $\M$ to $\N$, written $\M\leadsto\N$, if there are chirotopes $\chi_\M$ for $\M$ and $\chi_\N$ for $\N$ such that $\chi_\N(e_1,\ldots,e_r)\in\{ \chi_\M(e_1, \ldots, e_r),0\}$ for each $e_1,\ldots, e_r$. Equivalently, $\M\leadsto\N$ if for each $\y\in\V^*(\N)$ there is an $\x\in\V^*(M)$ such that $\x\geq \y$.
\item We say there is a {\em strong map} from $\M$ to $\N$, and call $\N$ a {\em strong map image} of $\M$, if $\V^*(\N)\subseteq\V^*(\M)$.
\end{enumerate}
\end{defn}

The motivating example for weak maps is an $\N$ derived from a vector arrangement $\A$ and an $\M$ derived from a vector arrangement obtained from $\A$ by perturbing the vectors slightly.  In terms of subspaces, this corresponds to an $\N$ derived from a subspace $V$ of $\R^E$ such that every open neighborhood of $V$ in $\Gr(r,\R^E)$ contains an element with corresponding oriented matroid $\M$. The motivating example for strong maps is an $\M$ derived from a vector arrangement $\A$ and an $\N$ derived from the image of $\A$ under a linear map. In terms of subspaces, this corresponds to am $\M$ derived from a space $V$ and $\N$ derived from a subspace of $V$.

\subsection{The Topological Representation Theorem}

Let $\V^*$ be the set of covectors arising from an arrangement $(\vv_e:e\in E)$ of vectors in $\R^r$. For simplicity, assume none of the vectors is $\mathbf 0$. Each hyperplane $\vv_e^\perp$ intersects the unit sphere in an equator $S_e^0$, which bounds two open hemispheres $S_e^+$ and $S_e^-$: here $S_e^\epsilon=\{\y:\sign(\vv_e\cdot\y)=\epsilon\}$. These equators partition the unit sphere into cells indexed by $\V^*-\{\mathbf 0\}$: the cell corresponding to $\x\in\V^*-\{\mathbf 0\}$ is $\{\y\in S^{r-1}:\x=(\sign(\y\cdot\vv_e):e\in E)\}$. A remarkable theorem shows that a similar statement is true for non-realizable oriented matroids.

\begin{defn} 
A {\em signed pseudosphere} in $S^{r-1}$ is an ordered partition $(S^0, S^+, S^-)$ of $S^{r-1}$ such that $S^0$ is a codimension 1 topological sphere and $S^0\cup S^+$ and $S^0\cup S^-$ are topological balls. $S^+$ and $S^-$ are called the \textbf{sides} of $S^0$.

An {\em arrangement of signed pseudospheres} in $S^{r-1}$ is a finite collection $((S_e^0, S_e^+, S_e^-):e\in E)$ of signed pseudospheres satisfying all of the following.
	\begin{enumerate}
		\item For every $A\subseteq E$, the set $S_A:=\cap_{e\in A}S_e^0$ is a topological sphere.
		\item  For every $e\in E$ and $A\subseteq E\backslash\{e\}$, either $S_A\subseteq S_e^0$ or $S_A\cap S_e^0$ is a pseudosphere in $S_A$ with sides $S_A\cap S_e^+$ and $S_A\cap S_e^-$.
		\item The intersection of any collection of  sides $S_e^\epsilon$ is either a topological sphere or a topological ball.
	\end{enumerate}
	
	The arrangement is {\em essential} if $S_E=\emptyset$.
\end{defn}


An essential arrangement $\A=((S_e^0, S_e^+, S_e^-):e\in E)$ of signed pseudospheres in $S^{r-1}$ decomposes $S^{r-1}$ into cells, and each cell $\sigma$ can be identified by a sign vector $\x$, where $\x(e)=\epsilon$ if $\sigma\in S_e^\epsilon$. Let $\V^*(\A)$ denote the set of all such sign vectors, together with the vector $\mathbf 0$.

\begin{thm}[Topological Representation Theorem]\cite{FL}\label{thm:TRT} For every essential arrangement $\A$ of signed pseudosperes in $S^{r-1}$, $\V^*(\A)$ is the covector set of a rank $r$ oriented matroid.

Conversely, every rank $r$ oriented matroid arises as $\V^*(\A)$ for some essential arrangement $\A$ of signed pseudosperes in $S^{r-1}$.
\end{thm}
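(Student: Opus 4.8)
The plan is to prove the two directions separately, with the converse (every oriented matroid is represented by a pseudosphere arrangement) being by far the harder one. For the first direction — that an essential pseudosphere arrangement $\A$ produces a covector set — I would verify the four covector axioms directly against the cell structure on $S^{r-1}$. The zero vector $\mathbf 0$ corresponds to the intersection $S_E$ of all the $S_e^0$ if it were nonempty, but since $\A$ is essential we adjoin $\mathbf 0$ by fiat, as in the definition of $\V^*(\A)$; symmetry ($\x\in\V^*\Rightarrow-\x\in\V^*$) comes from the antipodal map, which preserves each $S_e^0$ and swaps $S_e^+\leftrightarrow S_e^-$. Composition $\x\circ\y\in\V^*$ and the elimination axiom are the substantive points: here I would argue by induction on $r$ using axiom (2) in the definition of a pseudosphere arrangement, which says that restricting the arrangement to any sub-sphere $S_A$ again yields a pseudosphere arrangement (of lower rank) in $S_A$. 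Composition follows by taking a point in the closure of the cell $\x$ that lies in the cell $\y$-direction; elimination follows by restricting to the pseudosphere $S_e^0$ separating the two cells and applying the rank-$(r-1)$ statement. The rank claim — that $\V^*(\A)$ has rank $r$ as a subposet — follows because $S^{r-1}$ has cells of dimension $0$, i.e. the arrangement is essential.

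For the converse, I would follow the Folkman–Lawrence strategy (this is Theorem~\ref{thm:TRT}, so I may cite~\cite{FL}, but I'll sketch the structure of the argument). Given a rank $r$ oriented matroid $\M$ with covector set $\V^*$, first observe that the poset $\V^*-\{\mathbf 0\}$, ordered by the reverse of the componentwise order on $\{0,+,-\}^E$ (so that covectors of larger support are smaller, i.e. cells of lower dimension), is the face poset of the cell complex we want to build. The key combinatorial fact is that this poset is the face poset of a \emph{regular} CW sphere: one shows it is a thin, shellable poset of the right dimension. I would build the sphere by induction on the number of elements $|E|$. Deleting an element $e$ gives a smaller oriented matroid $\M\setminus e$, realized inductively by an arrangement $\A'$ in $S^{r-1}$; then the contraction $\M/e$ tells us how a new pseudosphere $S_e^0$ should sit inside this arrangement, namely as a codimension-$1$ sphere whose induced arrangement realizes $\M/e$. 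Gluing $S_e^0$ into $S^{r-1}$ so that it cuts the cells of $\A'$ in the pattern prescribed by the covectors of $\M$ involves a careful isotopy/extension argument, using that $S_e^0$ must be unknotted (a tame codimension-$1$ sphere in $S^{r-1}$), which is where dimension-specific topology enters.

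The main obstacle — and the genuinely hard part of the whole theorem — is exactly this last gluing step in the converse: showing that the abstract combinatorial prescription for where $S_e^0$ should cross each existing cell can be realized by an \emph{actual} tamely embedded pseudosphere. This requires (a) that the relevant subcomplex on which $S_e^0$ is forced to lie is itself a PL sphere of the correct dimension, and (b) an extension theorem allowing one to thicken this to a bicollared sphere compatible with the existing arrangement. The combinatorial input for (a) is the shellability/thinness of the big-face-lattice of $\M$, and the topological input for (b) is essentially the generalized Schoenflies theorem in the PL category. Because this is substantial and is the content of~\cite{FL}, in the body of the paper I would state the theorem and refer to~\cite{FL} for the converse, while recording the (easier) forward direction in the detail sketched above.
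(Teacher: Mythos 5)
The paper offers no proof of this theorem at all: it is stated as background in the oriented-matroid appendix and attributed entirely to Folkman and Lawrence \cite{FL}. So your plan to cite \cite{FL} for the converse and only record the forward direction matches the paper's treatment in spirit, and your converse sketch (the poset $\V^*-\{\mathbf 0\}$ as the face poset of a regular cell sphere, with shellability/thinness as the combinatorial input and PL Schoenflies-type results as the topological input) correctly identifies where the real work lies, even though the published proofs do not proceed by the one-pseudosphere-at-a-time gluing induction you describe, precisely because the tameness/isotopy step you flag is so delicate.

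However, the forward direction, which you claim to verify in detail, has a concrete gap. Under the definition used here, a signed pseudosphere is merely an ordered partition $(S^0,S^+,S^-)$ of $S^{r-1}$; central symmetry is not required. Your justification of the axiom $\x\in\V^*(\A)\Rightarrow-\x\in\V^*(\A)$ therefore fails: the antipodal map need not preserve $S_e^0$ nor swap $S_e^+$ with $S_e^-$, and for a non-symmetric arrangement the antipodal image of a cell need not even be a union of cells. Symmetry of $\V^*(\A)$ is a genuinely nontrivial fact in this generality (in standard treatments it is obtained only after showing that every arrangement of pseudospheres is topologically equivalent to a centrally symmetric one, or extracted from the combinatorial structure developed in the course of the main proof). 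Similarly, your one-line reductions of composition and elimination to ``induction on $r$ using axiom (2)'' pass over real topological content: for elimination one must relate the cell structure that the remaining pseudospheres induce on $S_e^0$ to the cells of the full arrangement, which is something conditions (1)--(3) have to be exploited to establish, not an immediate restriction statement. So if the forward direction is to be included as a complete argument rather than also deferred to \cite{FL}, it needs substantially more care; as written it is a plausible outline with a false step at the symmetry axiom.
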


\section{Whitney stratifications\label{app:whitney}}

 If $M$ is a smooth manifold, a {\em Whitney stratification} of a closed subset $T$ of $M$ is a partition of $T$ into manifolds that behaves well with respect to the smooth structure on $M$. Specifically, a Whitney stratification of $T$ is a locally finite partition $(S_i: i\in P)$ of $T$, where each $S_i$ is a locally closed smooth submanifold of $M$ and $P$ is a poset, satisfying
 \begin{enumerate}
 \item For each $\alpha, \beta\in P$,
 \[S_\alpha\cap\overline{S_\beta}\neq\emptyset\Leftrightarrow S_\alpha\subseteq \overline{S_\beta}\Leftrightarrow \alpha\leq\beta.\]
 \item Suppose $\alpha\leq\beta$ in $P$, and suppose $(x_i)$ is a sequence of points in $S_\beta$ converging to some $y\in S_\alpha$. Suppose a sequence $(y_i)$ in $S_\alpha$ also converges to $y$, and suppose that, in local coordinates on $M$, the secant lines $l_i=\overline{x_iy_i}$ converge to some line $l$, and the tangent planes $T_{x_i}S_\beta$ converge to some plane $\tau$. Then
  \begin{enumerate}
  \item\label{2a} {\em (Whitney's condition A)} $T_yS_\alpha\subseteq\tau$ and
  \item\label{2b} {\em (Whitney's condition B)}  $l\subseteq\tau$.
 \end{enumerate}
 \end{enumerate}
(It is not hard to see (cf.~\cite{GorM}, \cite{Trotman}) that Whitney's condition B implies Whitney's condition A.)
 
 We summarize here all we need to know about Whitney stratifications for our purposes.
 
 \begin{thm} (cf. \cite{GorM})  \label{whitney:semialgebraic} Any partition of a subset of $\R^n$ into semialgebraic sets can be subdivided to a Whitney stratification.
 \end{thm}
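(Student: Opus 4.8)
The plan is to produce the stratification in two stages: first refine the given partition to a finite semialgebraic partition into smooth connected manifolds satisfying the frontier condition~(1), and then repair Whitney's conditions~(2) by an induction on dimension whose engine is the fact that Whitney regularity of a pair of semialgebraic strata fails only on a lower-dimensional semialgebraic set.

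For the first stage I would invoke the basic structure theory of semialgebraic sets. By the Tarski--Seidenberg theorem and semialgebraic cell decomposition, the members of the given (finite) partition admit a common semialgebraic refinement into cells, each of which is a connected $C^1$ (indeed Nash) semialgebraic submanifold of $\R^n$. To arrange the frontier condition, note that whenever $S_\alpha\cap\overline{S_\beta}\neq\emptyset$ but $S_\alpha\not\subseteq\overline{S_\beta}$, both $S_\alpha\cap\overline{S_\beta}$ and $S_\alpha\setminus\overline{S_\beta}$ are proper semialgebraic subsets of $S_\alpha$; splitting each such $S_\alpha$ along $\overline{S_\beta}$, re-decomposing into manifolds, and iterating terminates, since at each step either some piece drops in dimension or the number of offending pairs decreases while everything stays finite. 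After this stage we have a finite semialgebraic partition $(S_i:i\in P)$ into $C^1$ manifolds with $S_\alpha\cap\overline{S_\beta}\neq\emptyset\Leftrightarrow S_\alpha\subseteq\overline{S_\beta}$, and we set $\alpha\le\beta$ accordingly.

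The second stage rests on the genericity of Whitney's conditions: for disjoint connected semialgebraic $C^1$ submanifolds $X,Y$ of $\R^n$ with $X\subseteq\overline Y$, the set $W(X,Y)\subseteq X$ of points at which the pair $(X,Y)$ fails Whitney's condition~B is semialgebraic and satisfies $\dim W(X,Y)<\dim X$. Semialgebraicity follows from Tarski--Seidenberg together with the curve selection lemma, which lets one replace the condition over all sequences of secant lines and tangent planes by a condition over semialgebraic arcs, hence by a first-order formula; that $W(X,Y)$ is nowhere dense in $X$ is the classical Whitney genericity theorem, whose proof I would cite from the standard references on stratification theory. Since condition~B implies condition~A (as noted above), controlling $W(X,Y)$ suffices.

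Finally I would run the repair by induction on $k=\dim X$. Assuming all pairs $S_\alpha\le S_\beta$ with $\dim S_\alpha<k$ are Whitney regular, let $W_k$ be the union of the sets $W(S_\alpha,S_\beta)$ over pairs with $\dim S_\alpha=k$; by the genericity lemma $W_k$ is semialgebraic and meets each $k$-dimensional stratum in a set of dimension $<k$. Refine the partition so that $W_k$ is a union of strata and re-impose the frontier condition as in the first stage; a sub-partition of a Whitney-regular pair into submanifolds remains Whitney regular, so the lower-dimensional pairs stay good, while every stratum now contained in $W_k$ has dimension $<k$. Incrementing $k$ and repeating finitely many times (the process stops once $k$ exceeds the dimension of the ambient set) yields a Whitney stratification subdividing the original partition. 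The main obstacle is the genericity lemma of the preceding paragraph — establishing that $W(X,Y)$ is semialgebraic and nowhere dense — and, secondarily, the bookkeeping showing the iterated refinement terminates rather than continuing forever; I would handle both by appealing to the cited structure theory of semialgebraic sets rather than by direct computation.
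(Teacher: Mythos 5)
The paper itself offers no proof of this statement---it is quoted from Goresky--MacPherson---so your sketch is being measured against the standard literature argument rather than anything in the text. Your first stage (a common semialgebraic cell refinement plus the frontier condition, assuming the partition is finite) and your appeal to the genericity lemma (the set $W(X,Y)\subseteq X$ where condition B fails for a pair of semialgebraic strata is semialgebraic of dimension $<\dim X$) are consistent with that argument, and citing the lemma is no worse than what the paper does in citing the whole theorem.

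The genuine gap is in the induction that is supposed to assemble these ingredients. First, the claim ``a sub-partition of a Whitney-regular pair into submanifolds remains Whitney regular'' is only valid when the \emph{lower} stratum is shrunk, or when the higher stratum is cut into pieces open in it: if the higher stratum $Y$ is subdivided and a piece $Y'$ has lower dimension, the limits of the tangent planes $T_{x_i}Y'$ are in general proper subspaces of the limits of $T_{x_i}Y$, so condition B for $(X,Y)$ does not imply it for $(X,Y')$. Your ``re-impose the frontier condition'' step cuts higher strata along closures of other strata and thus produces exactly such lower-dimensional pieces, which can destroy pairs you had already certified. Second, and more seriously, the induction runs in the wrong direction: at stage $k$ you split the $k$-dimensional strata along $W_k$, creating new strata of dimension $<k$ that lie \emph{inside} the failure sets $W(S_\alpha,S_\beta)$. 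The pairs having these new strata as lower member were never certified---the ambient pair $(S_\alpha,S_\beta)$ was precisely the irregular one, so the sub-partition argument does not apply to them---and because you then increment $k$, dimensions $<k$ are never revisited; the hypothesis ``all pairs with lower stratum of dimension $<k$ are regular'' is therefore not restored, and the single upward sweep does not produce a Whitney stratification. The standard proof runs the induction \emph{downward} in dimension: having built the strata of dimension $>d$, one removes from the remaining $d$-dimensional semialgebraic set its singular locus, the locus violating the frontier condition, and the locus where Whitney regularity over the already-built higher strata fails (all semialgebraic of dimension $<d$ by the genericity lemma), takes what is left as the $d$-dimensional strata, and pushes the removed points into the lower skeleton to be handled at later stages; since regularity of a pair is a condition at points of the lower stratum and lower strata are built last, every pair is checked exactly once and the process terminates. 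Your scheme could be repaired along these lines (or by iterating your sweep together with a separate termination argument), but as written the key inductive step fails.
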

 	
\begin{thm} (cf.~\cite{GorM}, \cite{Trotman})\label{whitney:opendense} If $M_1$ and $M_2$ are smooth manifolds and $Z_1\subseteq M_1$ and $Z_2\subseteq M_2$ are closed subsets with Whitney stratifications, then $\{f\in C^\infty(M_1, M_2): f|_{Z_1}\mbox{ is transverse to }Z_2\}$ is open and dense in $C^\infty(M_1, M_2)$ (with respect to the Whitney topology).
\end{thm}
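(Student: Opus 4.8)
The statement is the stratified form of Thom's transversality theorem, so the plan is to split it into an \emph{openness} claim and a \emph{density} claim and to attack these by different mechanisms: density will come from the classical (parametric) transversality theorem together with the Baire category theorem, while openness is precisely the step that uses the Whitney conditions. The first thing I would do is unwind the definition: $f$ lies in the set $\mathcal T$ in question exactly when, for every stratum $R$ of the given Whitney stratification of $Z_1$ and every stratum $S$ of that of $Z_2$, the restriction $f|_R\colon R\to M_2$ is transverse to the submanifold $S$. Since $M_1$, $M_2$ are second countable and both stratifications are locally finite, there are only countably many pairs $(R,S)$, so $\mathcal T=\bigcap_{(R,S)}\mathcal T_{R,S}$ with $\mathcal T_{R,S}=\{f: f|_R\pitchfork S\}$; this countable decomposition is what lets Baire category do its work.

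\emph{Density.} For a fixed pair $(R,S)$ I would invoke the ordinary transversality theorem applied to the smooth manifold $R$ and the submanifold $S\subseteq M_2$: exhausting $R$ by compacta $C_1\subseteq C_2\subseteq\cdots$, each set $\{f: f|_R\pitchfork S\text{ along }C_n\}$ is open and dense, so $\mathcal T_{R,S}$ is residual in $C^\infty(M_1,M_2)$ with the Whitney topology. A countable intersection of residual sets is residual, and $C^\infty(M_1,M_2)$ with the Whitney topology is a Baire space, so $\mathcal T$ is dense. None of this uses the Whitney conditions.

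\emph{Openness, and the main obstacle.} This is the step I expect to be delicate, and it is where Whitney's condition A (condition (\ref{2a}) above) is indispensable. I would first reduce to a local statement: it suffices, for each $f\in\mathcal T$ and each compact $K\subseteq M_1$, to produce a $C^1$-size neighbourhood of $f$ on $K$ consisting of maps transverse along $K$, since such local controls patch to a genuine Whitney neighbourhood by local finiteness. Then I would argue by contradiction: a failure produces $g_i\to f$ (in $C^1$ on $K$) and points $x_i\in K\cap Z_1$ at which $g_i|_{R_i}$ fails to be transverse to $S_i$, where $R_i$, $S_i$ are the strata through $x_i$ and $g_i(x_i)$. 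Using local finiteness I would pass to a subsequence with $R_i=R$ and $S_i=S$ fixed, $x_i\to x$, $g_i(x_i)\to f(x)$, and (working in charts) with $T_{x_i}R\to\tau_1$ and $T_{g_i(x_i)}S\to\tau_2$. Letting $R'$, $S'$ be the strata containing $x$ and $f(x)$, Whitney's condition A, applied to each stratification, gives $T_xR'\subseteq\tau_1$ and $T_{f(x)}S'\subseteq\tau_2$. Since $f\in\mathcal T$ we have $df_x(T_xR')+T_{f(x)}S'=T_{f(x)}M_2$, hence $df_x(\tau_1)+\tau_2=T_{f(x)}M_2$; and because $\dim(U+W)$ is lower semicontinuous in $(U,W)$ and $dg_{i,x_i}\to df_x$, for large $i$ we get $dg_{i,x_i}(T_{x_i}R)+T_{g_i(x_i)}S=T_{g_i(x_i)}M_2$, i.e.\ $g_i|_R$ is after all transverse to $S$ at $x_i$ — a contradiction. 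Together the two halves give that $\mathcal T$ is open and dense. The only genuine subtlety is this openness argument; the bookkeeping with non-closed strata and with the Whitney topology is routine but must be handled with care, and one may alternatively simply cite \cite{GorM} or \cite{Trotman}.
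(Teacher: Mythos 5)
The paper itself gives no proof of this statement --- it is quoted as background with references to \cite{GorM} and \cite{Trotman} --- so there is nothing internal to compare your argument against; what you propose is the standard stratified transversality argument, and in outline it is sound. Your openness half is the right mechanism and correctly identifies where Whitney regularity enters: condition A for the stratification of $Z_2$ \emph{and} of $Z_1$ (since the failing points $x_i$ may lie in a stratum $R$ while converging into a lower stratum $R'$), together with closedness of $Z_1,Z_2$ and local finiteness to stabilize $R_i=R$, $S_i=S$ along a subsequence; the lower-semicontinuity step is cleanest if phrased as choosing vectors in $\tau_1$ and $\tau_2$ whose images span $T_{f(x)}M_2$, approximating them by vectors in $T_{x_i}R$ and $T_{g_i(x_i)}S$, and using that spanning is an open condition on finite tuples of vectors. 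The one inaccuracy is in the density half: for a non-closed stratum $S$ the set $\{f: f|_R \pitchfork S \mbox{ along } C_n\}$ is dense but need \emph{not} be open --- failure points $x_i\in C_n$ can have images $g_i(x_i)\in S$ tending to a point of $\overline{S}\setminus S$, where transversality of $f$ imposes no constraint --- and openness of exactly such sets for closed targets is the content of the theorem, so it should not be assumed for free at this stage. The standard repair is to exhaust each stratum $S$ by compacta $K_j\subseteq S$ and work with the sets $\{f: f|_R \pitchfork S \mbox{ at every } x\in C_n \mbox{ with } f(x)\in K_j\}$, which \emph{are} open (a limiting failure point now has image in $K_j\subseteq S$, where transversality of $f$ applies) and dense by parametric transversality; each $\mathcal{T}_{R,S}$ is then residual, and with countability of the strata and the Baire property of the strong topology, which you invoke, the density conclusion follows. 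Alternatively one may simply quote the classical Thom transversality theorem in the form ``$\{f\in C^\infty(M_1,M_2): f|_R \pitchfork S\}$ is residual in the strong topology,'' which already packages this bookkeeping. With that adjustment your proof is complete and is, as far as the present paper is concerned, a legitimate substitute for the citation.
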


(Without getting into the definition of the Whitney topology, we note that when $M_1$ is compact the Whitney topology on $C^\infty(M_1, M_2)$ coincides with the compact-open topology.)

\begin{thm}(\cite{Mather})\label{whitney:pullback} If $M_1$ and $M_2$ are smooth manifolds, $Z_2\subseteq M_2$ is a closed subset with a Whitney stratification, and $f\in C^\infty(M_1, M_2)$ such that $f(M_1)$ is transverse to $Z_2$, then the Whitney stratification pulls back to a Whitney stratification of $f^{-1}(Z_2)$ 
\end{thm}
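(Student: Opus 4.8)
The plan is to pull the given stratification back stratum by stratum and to verify the four requirements in the definition of a Whitney stratification; here ``$f$ is transverse to $Z_2$'' is understood, as in Theorem~\ref{whitney:opendense}, to mean that $f$ is transverse to every stratum $S_i$ of the given Whitney stratification $(S_i:i\in P)$ of $Z_2$. Put $T_i:=f^{-1}(S_i)$. Since the $S_i$ partition $Z_2$ and $f$ is continuous, the $T_i$ partition the closed set $f^{-1}(Z_2)$, and local finiteness of $(T_i)$ follows at once from local finiteness of $(S_i)$ and continuity of $f$. For each $i$, transversality of $f$ to the submanifold $S_i$ and the preimage theorem make $T_i$ a locally closed smooth submanifold of $M_1$ with $\mathrm{codim}_{M_1}T_i=\mathrm{codim}_{M_2}S_i$, and at every $x\in T_i$ one has the clean identity $T_xT_i=(df_x)^{-1}(T_{f(x)}S_i)$, with $df_x$ inducing a surjection $T_xM_1\twoheadrightarrow T_{f(x)}M_2/T_{f(x)}S_i$ (so $\ker df_x\subseteq T_xT_i$, and the ``normal rank'' of $df$ along $T_i$ is the constant $\mathrm{codim}\,S_i$). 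These remarks settle everything except the frontier condition and Whitney's conditions; and since, as noted after the definition of Whitney stratification, condition B implies condition A, it is enough to verify the frontier condition and condition B.

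For the frontier condition, fix $\alpha,\beta\in P$. The implication $T_\alpha\subseteq\overline{T_\beta}\Rightarrow T_\alpha\cap\overline{T_\beta}\neq\emptyset$ is trivial, and $T_\alpha\cap\overline{T_\beta}\neq\emptyset\Rightarrow\alpha\leq\beta$ is formal: a point of $T_\alpha\cap\overline{T_\beta}$ maps under $f$ into $S_\alpha\cap\overline{S_\beta}$, forcing $\alpha\leq\beta$ downstairs. The content is the implication $\alpha\leq\beta\Rightarrow T_\alpha\subseteq\overline{T_\beta}$, for which I would use the Thom--Mather local structure of a Whitney stratified set along $S_\alpha$: near any $f(x)$ with $x\in T_\alpha$ there is a neighborhood $V$ and a stratum-preserving homeomorphism exhibiting $Z_2\cap V$ as a locally trivial cone bundle over $S_\alpha\cap V$, in which the stratum $S_\beta$ (since $\beta>\alpha$) occupies part of the open cone and hence accumulates onto $f(x)$. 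Choosing $w_k\in S_\beta$ with $w_k\to f(x)$ and invoking transversality of $f$ to $S_\beta$ — precisely, the constancy of the normal rank of $df$ near $x$ — one lifts the $w_k$ to points $x_k\in T_\beta$ with $x_k\to x$, giving $x\in\overline{T_\beta}$.

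Finally, Whitney's condition B for $(T_\alpha,T_\beta)$ at a point $x\in T_\alpha$. The strategy is to transport the limiting data upstairs to the corresponding data downstairs via $f$: given $x_k\in T_\beta\to x$, points $y_k\in T_\alpha\to x$, secant lines $\ell_k\to\ell$ and tangent planes $T_{x_k}T_\beta\to\sigma$, one has $f(x_k)\in S_\beta\to f(x)$, $f(y_k)\in S_\alpha\to f(x)$, the secant lines $\overline{f(x_k)f(y_k)}$ subconverge to some line $\ell'$, and $T_{f(x_k)}S_\beta=df_{x_k}(T_{x_k}T_\beta)$ subconverges to some plane $\tau$; Whitney's condition B for $(S_\alpha,S_\beta)$ then gives $\ell'\subseteq\tau$. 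It remains to translate $\ell'\subseteq\tau$ back to $\ell\subseteq\sigma$ using the identities $T_xT_\alpha=(df_x)^{-1}(T_{f(x)}S_\alpha)$ and $T_{x_k}T_\beta=(df_{x_k})^{-1}(T_{f(x_k)}S_\beta)$ together with the inclusion $\ker df_{x_k}\subseteq T_{x_k}T_\beta$. This last step is the one I expect to be the main obstacle: it requires showing that preimages of the form $(df_{x_k})^{-1}(\,\cdot\,)$ pass correctly to the limit as $k\to\infty$, which fails for an arbitrary family of linear maps but is rescued here by the local constancy of the normal rank of $df$ along the strata. Making that limiting argument fully rigorous — or, equivalently, reducing the whole theorem to the local product form supplied by the Thom--Mather theory of Whitney stratifications — is the crux, and is the content of Mather's proof.
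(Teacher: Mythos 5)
The paper does not actually prove this statement --- it is quoted from Mather's notes --- so there is no internal argument to compare with; judged on its own terms, your proposal is a correct skeleton (pull back the strata, check smoothness/codimension via transversality, then the frontier condition and condition B) but the two steps you defer are exactly where all the content lies, and the mechanism you invoke for both, ``local constancy of the normal rank of $df$,'' is not available and is not the right tool. Transversality constrains $df$ only at points of $f^{-1}(S_i)$ (and only relative to $S_i$); the rank of $df$ can jump arbitrarily in a neighborhood of $x$, so nothing like rank constancy can be used to lift the points $w_k\in S_\beta$ or to pass the preimages $(df_{x_k})^{-1}(\cdot)$ to the limit. Since you explicitly leave that limiting argument ``to Mather,'' the proposal as written is a plan rather than a proof.

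For what it is worth, the missing step for condition B is not deep once set up correctly, and the rescue is Whitney's condition A at the limit point, not any property of $df$ away from the strata. With $x_k\in T_\beta\to x\in T_\alpha$, $T_{x_k}T_\beta\to\sigma$, $T_{f(x_k)}S_\beta\to\tau$: first, $\sigma\subseteq (df_x)^{-1}(\tau)$ by continuity of $df$ together with $T_{x_k}T_\beta=(df_{x_k})^{-1}(T_{f(x_k)}S_\beta)$; second, condition A for $(S_\alpha,S_\beta)$ at $f(x)$ gives $T_{f(x)}S_\alpha\subseteq\tau$, and combined with transversality of $f$ to $S_\alpha$ at $x$ this yields $df_x(T_xM_1)+\tau=T_{f(x)}M_2$, so $\dim (df_x)^{-1}(\tau)=\dim M_1-\mathrm{codim}\,S_\beta=\dim\sigma$ and hence $\sigma=(df_x)^{-1}(\tau)$; third, writing $u$ for the limiting secant direction upstairs, Taylor expansion gives $df_x(u)$ as a limit of multiples of $f(x_k)-f(y_k)$, so either $df_x(u)=0$ or $df_x(u)$ spans the limit $\ell'$ of the downstairs secants, and condition B downstairs puts it in $\tau$; in both cases $\ell\subseteq (df_x)^{-1}(\tau)=\sigma$. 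Note that this argument needs condition A downstairs at the \emph{limit} point and transversality to the \emph{smaller} stratum $S_\alpha$ at $x$ --- neither of which appears in your sketch. Similarly, for the frontier condition the lift of $w_k\in S_\beta$ should go through a normal slice: use transversality to $S_\alpha$ at $x$ to pick a submanifold $P\ni x$ of $M_1$ such that $f|_P$ is an embedding near $x$ with image transverse to $S_\alpha$ of complementary dimension, and then use the Thom--Mather local cone structure to see that this image meets $S_\beta$ arbitrarily close to $f(x)$; appealing to transversality to $S_\beta$ itself cannot start the lift, because a priori you do not yet know $f$ hits $S_\beta$ near $x$ at all.
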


\begin{thm} (\cite{Trotman}) \label{whitney:product} A product of two Whitney stratifications is a Whitney stratification.
\end{thm}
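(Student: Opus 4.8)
The plan is to reduce everything to a computation in a product of coordinate charts. Write the two stratifications as $(S_i:i\in P)$ of $Z_1\subseteq M_1$ and $(R_j:j\in Q)$ of $Z_2\subseteq M_2$, and give $Z_1\times Z_2$ the partition $(S_i\times R_j:(i,j)\in P\times Q)$, with $P\times Q$ ordered componentwise. Since local finiteness is a local condition and Whitney's conditions A and B are checked in local coordinates, I would fix a point of $M_1\times M_2$ and work in the product of a chart around its first coordinate and a chart around its second.

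First I would dispatch the routine axioms. Each $S_i\times R_j$ is a locally closed smooth submanifold of $M_1\times M_2$, being a product of locally closed smooth submanifolds, and local finiteness of the product partition follows from that of the factors by taking products of neighborhoods. For the frontier condition, $\overline{S_i\times R_j}=\overline{S_i}\times\overline{R_j}$, so $(S_{i'}\times R_{j'})\cap\overline{S_i\times R_j}=(S_{i'}\cap\overline{S_i})\times(R_{j'}\cap\overline{R_j})$; this is nonempty exactly when $i'\le i$ and $j'\le j$, i.e.\ when $(i',j')\le(i,j)$, and in that case it equals $S_{i'}\times R_{j'}$, so the three equivalent statements of condition~(1) all hold. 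Whitney's condition~A is immediate, since $T_{(x,y)}(S_{i'}\times R_{j'})=T_xS_{i'}\times T_yR_{j'}$ and a product of subspace inclusions is a subspace inclusion; in any case it follows from condition~B.

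The real content is Whitney's condition~B. Suppose $(i',j')\le(i,j)$, let $(x_k,y_k)\in S_i\times R_j$ converge to $(x,y)\in S_{i'}\times R_{j'}$, let $(x_k',y_k')\in S_{i'}\times R_{j'}$ converge to $(x,y)$, and suppose that in the product chart the secant lines $\ell_k=\overline{(x_k,y_k)(x_k',y_k')}$ converge to a line $\ell$ and the tangent planes $T_{(x_k,y_k)}(S_i\times R_j)=T_{x_k}S_i\times T_{y_k}R_j$ converge to a plane $\tau$. Passing to a subsequence, compactness of the relevant Grassmannians gives $T_{x_k}S_i\to\tau_1$ and $T_{y_k}R_j\to\tau_2$, and since the map $(A,B)\mapsto A\times B$ between Grassmannians is continuous, $\tau=\tau_1\times\tau_2$. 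Now normalize the secant direction: write it as a unit vector $v_k=(a_k,b_k)$ with $a_k$ in the first chart's coordinates and $b_k$ in the second's, and, after a further subsequence, assume $\|a_k\|\to s$ and $\|b_k\|\to t$ with $s^2+t^2=1$ and $v_k\to v=(a,b)$, so $\ell=\mathrm{span}(v)$. If $s>0$ then $x_k\neq x_k'$ for large $k$, the normalized secant directions $\tfrac{x_k-x_k'}{\|x_k-x_k'\|}=\tfrac{a_k}{\|a_k\|}$ converge to $\tfrac{a}{s}$, so the secant lines $\overline{x_kx_k'}$ in $M_1$ converge to $\mathrm{span}(a)$, and condition~B for $(S_{i'},S_i)$ gives $a\in\tau_1$; if $s=0$ then $a=0\in\tau_1$ trivially. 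Symmetrically $b\in\tau_2$, hence $v\in\tau_1\times\tau_2=\tau$ and $\ell\subseteq\tau$, as required.

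I expect the step just described to be the only real obstacle: one cannot directly project the secant line onto the two factors and quote condition~B, because the speeds of approach to $(x,y)$ in the $M_1$- and $M_2$-directions may be of completely different orders. Normalizing the secant direction and splitting on whether its limiting first component vanishes is exactly what repairs this. The remaining ingredients --- products of locally closed sets and of closures, and continuity of the product map on Grassmannians --- are standard.
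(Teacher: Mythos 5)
Your proof is correct. Note that the paper does not prove this statement at all: it is listed in Appendix~\ref{app:whitney} as background and attributed to Trotman's survey, so there is no in-paper argument to compare against; what you have written is the standard proof, and the one genuinely non-routine step --- normalizing the joint secant direction $v_k=(a_k,b_k)$ and splitting on whether the limit of $\|a_k\|$ (resp.\ $\|b_k\|$) vanishes, rather than trying to project the secant line to the factors directly --- is handled exactly right. Two small points are worth making explicit. First, when $i'=i$ but $j'<j$ your argument invokes condition~B for the pair $(S_{i'},S_i)$ with $i'=i$; this is literally covered by the paper's formulation (which assumes only $\alpha\le\beta$), and in any case it holds automatically for a single $C^1$ submanifold, but with the more common convention (distinct strata) one should say a word about it. Second, your reduction to a product chart tacitly uses the classical fact that conditions~A and~B are invariant under $C^1$ changes of coordinates, so that verifying them in one chart around each point suffices; this is standard (and consistent with how the paper states the definition), but deserves a sentence. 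With those remarks added, the argument is complete: local finiteness, local closedness, the frontier condition via $\overline{S_i\times R_j}=\overline{S_i}\times\overline{R_j}$, the identification $\tau=\tau_1\times\tau_2$ by compactness of the Grassmannians and continuity of the product map, and the two-case analysis for condition~B are all correct as written.
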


\begin{thm} (\cite{Gor}) \label{whitney:triangulate} Every Whitney stratification can be subdivided to a triangulation.
 \end{thm}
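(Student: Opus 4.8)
The plan is to follow Goresky's approach (\cite{Gor}) and triangulate a Whitney stratified set by induction on its \emph{depth}, i.e.\ the length of the longest chain $S_0 < S_1 < \cdots < S_d$ of strata, where $S < S'$ means $S \subseteq \overline{S'}$. First I would reduce to the ambient setting in which all our stratifications actually arise: a Whitney stratified closed subset $T$ of a smooth manifold $M$. Then I would invoke Thom--Mather theory to equip the stratification with a system of \emph{control data} — for each stratum $S$ a tubular neighborhood $T_S\subseteq M$, a retraction $\pi_S\colon T_S\to S$, and a tubular function $\rho_S\colon T_S\to[0,\infty)$ with $\rho_S^{-1}(0)=S$, chosen so that $(\pi_S,\rho_S)$ commutes appropriately with the analogous data on strata lying in $\overline{S}$. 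The structural input that makes the induction possible is the local conical form of a Whitney stratification: near a point of a $k$-dimensional stratum $S$, the pair $(M,T)$ is stratified-homeomorphic to $\R^k\times(\mathrm{cone}(L),\mathrm{cone}(L_T))$, where the link $L$ is a compact Whitney stratified set of strictly smaller depth.

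For the base case, depth $0$, the set $T$ is a disjoint union of smooth manifolds, triangulated by the classical Cairns--Whitehead theorem. For the inductive step, assume the theorem for all Whitney stratifications of depth $<d$. Every maximal chain of strata begins at a minimal stratum, so deleting the minimal strata lowers the depth; and each link $L$ occurring in the conical structure along a minimal stratum has depth $<d$, so by induction $L$ (with its induced stratification) admits a triangulation. Using the Thom--Mather first isotopy lemma to trivialize $T$ along each component of a minimal stratum, I would cone off a fixed triangulation of the corresponding link and cross it with a triangulation of that component, obtaining a triangulated closed neighborhood of each minimal stratum. Deleting the interiors of these neighborhoods leaves a Whitney stratified set with the minimal strata removed — again of depth $<d$ — which is triangulated by induction; the two pieces are then glued along the link spheres that form their common boundary.

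The main obstacle is \emph{coherence of the gluing}: the locally defined cone triangulations must agree on overlaps, must match the inductively supplied triangulation of the complement of the cone neighborhoods, and the resulting complex must be a genuine \emph{subdivision} of the stratification, meaning every open simplex lies in a single stratum. The device that makes this work is to require all triangulations to be \emph{radial} with respect to the tubular functions $\rho_S$, so that they are invariant under the controlled conical flows, and to perform the link triangulations in a suitably canonical (controlled) way, via Goresky's relative triangulation theorem, so that the choices forced on a stratum are compatible with those forced on all strata in its closure. Propagating the radiality condition through the induction, and verifying that the controlled vector fields used to trivialize neighborhoods become simplexwise linear in the constructed charts, is the technical heart of the argument.

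Finally, I would remark that for the stratifications actually needed in this paper, which are obtained by subdividing semialgebraic partitions (Theorem~\ref{whitney:semialgebraic}) and taking products and transverse pullbacks of such, one can in many cases bypass the full Thom--Mather machinery: the semialgebraic (respectively subanalytic) triangulation theorems of \L ojasiewicz and Hironaka directly produce a triangulation compatible with any given finite semialgebraic partition, and one then refines such a triangulation against the Whitney stratification to obtain the statement.
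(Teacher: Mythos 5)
The paper does not prove this statement at all: it is quoted as background with a citation to Goresky, so there is no in-paper argument to compare against. Your outline is a faithful sketch of exactly that cited proof (induction on depth using Thom--Mather control data, the local cone structure of Whitney stratified sets, and compatibly chosen link/radial triangulations), and your closing observation that the semialgebraic stratifications actually used in this paper could alternatively be triangulated directly via the \L ojasiewicz--Hironaka semialgebraic triangulation theorems is a reasonable shortcut consistent with how the result is used here.
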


\section{Errata and departures from~\cite{GM}\label{app:oops}}

Everything in~\cite{GM} is ``morally correct", but in writing this paper we found some minor corrections, as well as things we chose to do differently.
\begin{enumerate}
\item The space $\Y$ in~\cite{GM} was the Grassmannian bundle of $(a-2)$-planes in $E$. We instead make $\Y$  the Grassmannian bundle of $2$-planes in $E$. This makes the relationship to the complex $Y$ come out a bit simpler.
	\item The definition of the associated complex has several problems. 
	\begin{enumerate}
		\item The first is with the partial order implicit in the diagram. In Section 4 of~\cite{GM}, a stratification $\Y_\Delta$ is defined, with a triple $(\Delta,t,y)$  associated to each stratum. If a stratum $S_1$, with associated triple $(\Delta_1,t_1,y_1)$, is contained in the closure of a stratum $S_0$, with associated triple $(\Delta_0,t_0,y_0)$, then the definition of the stratification makes it clear that
	\begin{itemize}
		\item $\Delta_1\supseteq\Delta_0$, and
		\item it need not be true that  $t_0\leadsto t_1$. In fact, if these two oriented matroids have the same nonloops (or ``nonzero elements", in the language of~\cite{GM}) then $t_1\leadsto t_0$, and $t_1$ need not equal $t_0$. 
	\end{itemize}
	This precludes  construction of the continuous map $f$ at the heart of Section 4 in~\cite{GM}. The corrected partial order is in Section~\ref{sec:associated} of the present paper
	\item In~\cite{GM} $z$ is defined to be a  rank 1 oriented matroid, when in fact it should be one of the two nonzero covectors of that oriented matroid.  With their definition the associated complex is analogous to the canonical $\R P^1$ bundle associated to $\Z$, when what we actually want is the double cover of this, analogous to the canonical sphere bundle associated to $\Z$.
	\item Another problem is in the conditions on the relationship between a simplex $\Delta$ and an oriented matroid $t$ associated to $\Delta$ (what we call an affine OM chart associated to $\Delta$). Their definition is too weak: for instance, in a two-dimensional simplicial complex with a simplex $\Delta=\{a, b\}$ and $\mathrm{link}(\Delta)=\{c,d\}$, their definition would allow $t$ to be the oriented matroid of the affine point arrangement shown in Figure~\ref{fig:affineprob}.
	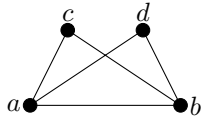
\begin{figure}
	\begin{tikzpicture}
	\draw [fill=black] (0,0) circle (.09cm);
	\draw [fill=black] (2,0) circle (.09cm);
	\draw [fill=black] (.5,1) circle (.09cm);
	\draw [fill=black] (1.5,1) circle (.09cm);
	\draw (0,0)--(2,0)--(1.5,1)--(0,0)--(.5,1)--(2,0);
	\node[left] at (0,0){$a$};
	\node[right] at (2,0){$b$};
	\node[above] at (.5,1){$c$};
	\node[above] at (1.5,1){$d$};
	\end{tikzpicture}
	\caption{Not an affine OM chart\label{fig:affineprob}}
	\end{figure}
This correction isn't important for purposes of the present paper, but it certainly violates the intended idea of $t$.

	\item  \cite{GM} specifies that $t$ has a nonnegative covector. This should really say that $t$ has a positive covector. One trivial reason is because the 0 vector is a covector of every oriented matroid. Less trivially, requiring $t$ to have a positive covector better reflects the idea of oriented matroids arising from smoothings, is combinatorially simpler, and imposes no serious restrictions.
		\end{enumerate}
	\item Proposition 1 of~\cite{GM} should state that the map $\rho$ is a quasifibration. (We prove this in Proposition~\ref{prop:circleqf}.) It need not be a fibration. For instance, let $n=2$, $\Delta=\{a,b,c\}$ be a simplex in $X$, and $t$ be the unique rank 3 oriented matroid whose set of elements is $X^0$ and whose set of nonloops is $\{a,b,c\}$. A topological representation of $t$, with orientations omitted, is given by the solid arcs on the 2-sphere  in Figure~\ref{fig:notafib}. (Note that one solid arc, labelled $a$, coincides with a dotted arc, labelled $y_1$.)

\begin{figure}
	\begin{tikzpicture}
	\draw(0,0) circle (2.5cm);
	\draw(1.77,1.77) to [out=180, in=90] (-1.77,-1.77);
		\node[above right] at (1.77,1.77){$a$, $y_1$};
	\draw (-1.77,1.77) to [out=0, in=90] (1.77,-1.77);
		\node[below right]  at (1.77,-1.77) {$c$};
	\draw (2.5,0) to [out=230, in=-50] (-2.5,0);
		\node[right] at (2.5,0) {$b$};
	\draw [dashed, ultra thick](2.2,1.1) to[out=190, in=60] (-2.2,-1.1) ;
		\node[below left] at (2.2,1.1) {$y_3$};
	\draw [dashed, ultra thick](2.05,1.43) to[out=165, in=95] (-2.05,-1.43) ;
		\node[right] at (2.05,1.43) {$y_2$};
	\draw [dashed, ultra thick] (1.77,1.77) to [out=180, in=90] (-1.77,-1.77);
	\draw [fill=black] (-.2, .55) circle (.09cm);
		\node [below right] at (-.2, .55) {$z_3$};
	\draw [fill=black] (-1.35, -.1) circle (.09cm);
		\node [below right] at (-1.35, -.1) {$z_2$};
	\draw [fill=black] (-1.55, -.75) circle (.09cm);
		\node [ right] at (-1.55, -.75) {$z_1$};
		\end{tikzpicture}
	\caption{The oriented matroids for the counterexample\label{fig:notafib}}
\end{figure}
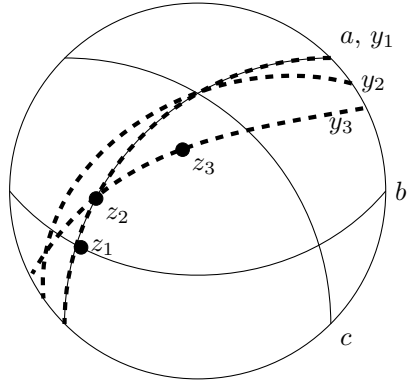

	Let $y_3\leadsto y_2\leadsto y_1$ be the rank 2 contractions of $t$ given by the three dotted arcs. 
Let $z_1$, $z_2$, and $z_3$ be the covectors of $t$ shown.	
Then in $Y$ we have the 2-simplex
	\[\Sigma_Y:=\{(\Delta, t, y_3)\succeq(\Delta, t, y_2)\succeq(\Delta, t, y_1)\}\]
and in $Z$ we have the 3-simplex
\[\Sigma_Z:=\{(\Delta, t, y_3, z_3)\succeq(\Delta, t, y_3, z_2)\succeq(\Delta, t, y_1, z_2)\succeq(\Delta, t, y_1, z_1)\}.\]

Since $z_2$ is not a covector of $y_2$, we see $(\Delta,t,y_2, z_2)\not\in Y$, and so $\Sigma_Z$ is not a face of a simplex that maps under $\|\rho\|$ to $\Sigma_Y$.

Let $\lambda: I\to \|Y\|$ be the linear map sending 0 to $(\Delta, t, y_3)$ and 1 to $(\Delta, t, y_1)$, and let $H:I\times I\to \|Y\|$ be the straight-line homotopy from $\lambda$ to the constant map $I\to(\Delta, t, y_2)$. Then $\lambda$ lifts to the linear map $\lambda'$ from $\Bary(\|\{(\Delta, t, y_3, z_3),(\Delta, t, y_3, z_2)\}\|)$ to $\Bary(\|\{(\Delta, t, y_1, z_2),(\Delta, t, y_1, z_1)\}\|)$.  The image of this lift lies in the interior of $\|\Sigma_Z\|$. Because $\Sigma_Z$ is not a face of a simplex that maps under $\|\rho\|$ to $\Sigma_Y$, we see that the Homotopy Lifting Property fails for the diagram

\begin{tikzcd}
I\times\{0\}\arrow[r,"\lambda"]\arrow[d]&\|Z\|\arrow[d,"\|\rho\|"]\\
I\times I\arrow[r,"H"]&\|Y\|.
\end{tikzcd}
\item A fixing cycle is defined in~\cite{GM} to be an integral cycle, but we could see no need for this.
\item In Theorem 1 of~\cite{GM}, $\frac{1}{2}\Omega$ should be $\Omega$.
	\item In Section 4 of~\cite{GM}, at several places $\mathrm{Cx}$ is written as $C \times$.
%
	\item In Theorem 2 of~\cite{GM}, $u(\delta)$ should be $u(S)$. 
	\item The construction of the fixing cycle in Section 4 of~\cite{GM} has at least one problem. The definition of the orientation $\epsilon$ doesn't work for general stratifications -- for instance, the stratification of the circle into a point and an open interval. 
	Our substitute for their construction of a fixing cycle is in Section~\ref{sec:fixing} of the present paper.
	\item Theorem 1 of~\cite{GM} gives a formula in terms of an arbitrary fixing cycle $\phi$. We were unable to verify that the formula holds for arbitrary fixing cycles. If our Conjecture~\ref{conj:fixing} is correct, then every fixing cycle is homologous to the fixing cycle we used, and so Gelfand and MacPherson's Theorem 1 follows.
\end{enumerate}

\bibliographystyle{amsalpha}
\bibliography{biblio}

\end{document}